\documentclass[reqno]{amsart}

\usepackage{amsmath,amssymb,amsfonts,amsthm}
\usepackage{cancel}
\usepackage{pgf,tikz}
\usepackage{float}
\usetikzlibrary{arrows}
\usepackage{mathdots}
\usepackage{hyperref}

 \usepackage{color}


\newcommand{\dtau}{\ \mathrm{d}\tau}

\theoremstyle{plain}
\newtheorem{thm}{Theorem}[section]
\newtheorem{prop}[thm]{Proposition}
\newtheorem{cor}[thm]{Corollary}
\newtheorem{lem}[thm]{Lemma}

\theoremstyle{definition}
\newtheorem{defn}[thm]{Definition}

\newtheorem{ex}[thm]{Example}
\newtheorem{rem}[thm]{Remark}

\numberwithin{equation}{section}




\newcommand{\BC}{{\mathbb C}}

\newcommand{\BR}{{\mathbb R}}

\newcommand{\BZ}{{\mathbb Z}}

\newcommand{\cC}{{\mathcal C}}

\newcommand{\cG}{{\mathcal G}}
\newcommand{\cI}{{\mathcal I}}

\newcommand{\cN}{{\mathcal N}}
\newcommand{\cO}{{\mathcal O}}\newcommand{\cP}{{\mathcal P}}
\newcommand{\cQ}{{\mathcal Q}}\newcommand{\cR}{{\mathcal R}}
\newcommand{\cS}{{\mathcal S}}\newcommand{\cT}{{\mathcal T}}
\newcommand{\cU}{{\mathcal U}}\newcommand{\cV}{{\mathcal V}}
\newcommand{\cX}{{\mathcal X}}
\newcommand{\cY}{{\mathcal Y}}\newcommand{\cZ}{{\mathcal Z}}





\newcommand{\wtilG}{\widetilde{G}}



\newcommand{\la}{\lambda}

\newcommand{\si}{\sigma}\newcommand{\Si}{\Sigma}

\newcommand{\im}{\textup{Im}\,}
\newcommand{\re}{\textup{Re}}
\newcommand{\kr}{\textup{Ker\,}}
\newcommand{\diag}{\textup{diag\,}}

\newcommand{\mat}[1]{\ensuremath{\begin{bmatrix} #1 \end{bmatrix}}}
\newcommand{\matt}[2]{\ensuremath{\left[\begin{array}{#1}#2\end{array} \right]}}

\newcommand{\ov}[1]{{\overline{#1}}}
\newcommand{\un}[1]{{\underline{#1}}}

\newcommand{\wtil}[1]{{\widetilde{#1}}}
\newcommand{\what}[1]{{\widehat{#1}}}

\newcommand{\ands}{\quad\mbox{and}\quad}


\newcommand{\dhup}{\ensuremath{\rotatebox{90}{\!$\twoheadrightarrow$}}}
\newcommand{\dhdown}{\ensuremath{\rotatebox{90}{\!$\twoheadleftarrow$}}}

\begin{document}


\title[Realization theory for poset-causal systems]{Realization theory for poset-causal systems:\ Controllability, observability and duality}

\author[S. ter Horst]{S. ter Horst}
\address{S. ter Horst, Department of Mathematics, Research Focus Area:\ Pure and Applied Analytics, North-West University, Potchefstroom, 2531 South Africa and DSI-NRF Centre of Excellence in Mathematical and Statistical Sciences (CoE-MaSS)}
\email{Sanne.TerHorst@nwu.ac.za}

\author[J. Zeelie]{J. Zeelie}
\address{J. Zeelie, Department of Mathematics,  Research Focus Area:\ Pure and Applied Analytics, North-West University, Potchefstroom 2531, South Africa}
\email{24698245@nwu.ac.za}

\thanks{This work is based on the research supported in part by the National Research Foundation of South Africa (Grant Numbers 118513 and 127364).}

\subjclass[2010]{Primary 93A14; Secondary 93B05, 93B07, 93C05}

\keywords{decentralized systems, posets, controllability, observability, duality}

\maketitle

\begin{abstract}
Poset-causal systems form a class of decentralized systems introduced by Shah and Parrilo \cite{SP08} and studied mainly in the context of optimal decentralized control. In this paper we develop part of the realization theory for poset-causal systems. More specifically, we investigate several notions of controllability and observability, and their relation under duality. These new notions extend concepts of controllability and observability in the context of coordinated linear systems \cite{KRvS12}. While for coordinated linear systems there is a clear hierarchical structure with a single (main) coordinator, for poset-causal systems there need not be a single  coordinator and the communication structure between the decentralized systems allows for more intricate structures, governed by partial orders. On the other hand, we show that the class of poset-causal systems is closed under duality, which is not the case for coordinated linear systems, and that duality relations between the various notions of observability and controllability exist.
\end{abstract}

\section{Introduction}\label{S:Intro}

In many practical applications, the systems under consideration are large scale and consist of spatially distributed, but interconnected subsystems. Information flow in such systems occur in a distributed manner and such systems lend themselves to decentralized control strategies. Examples include: large scale irrigation systems \cite{C08,LC08,NvODS09,SvOB14,W08}, hydroelectricity plants \cite{FMBF13,FMBF11} inland navigation networks \cite{RHDC14,SRND17}, civil building systems \cite{JKSA14,LL02,MBDB10} and electric power systems \cite{MDSLCBL17}.

Such interconnected systems are often approached using graph theoretic techniques with nodes representing subsystems and directed edges representing communication flow (see for example \cite{SL09,SL14}). An equivalent approach was introduced by Shah and Parrilo in \cite{SP08} using partial orders, leading to the notion of  poset-causal systems, which were further studied in  the papers \cite{SP09,SP11,SP13} and the PhD thesis \cite{S11}. An important sub-class of poset-causal systems are so-called coordinated linear systems, which were defined in \cite{RvS08}.

Much of the research on the controllability of decentralized systems take a graph-theoretic approach where leaders are to be chosen from multiple agents in a communication topology in such a way that renders the system controllable. In \cite{T04}, the controllability of an interconnected system by choosing a single leader among first-order subsystems, is considered. This was extended to multiple leaders in \cite{JE07}. A graphical characterization of controllability in this context was developed in \cite{JLY12}.  In the paper \cite{CZ10}, the concepts of formation controllability and complete controllability are studied for multi-agent swarm systems where all the agents are LTI systems and have the same order. The controllability of networked systems is studied in \cite{WCWT16}.

Several concepts of controllability and observability of coordinated linear systems with one leader and two followers were studied in \cite{KRvS12}. The aim was to develop concepts of controllability and observability that respect the communication structure of the system. In the current paper we study controllability for the larger class of poset-causal systems. Due to the more intricate structures of such systems, our notions are not based in leader-follower concepts, but rather use upstream and downstream subsystems from the perspective of the given subsystems. These lead to notions of controllability that respect the poset-causal structure, in the sense that the space of controllable state vectors decompose in direct sums of subspaces of the local state spaces corresponding to the subsystems of the poset-causal system. One notion, independently controllable, is stronger than classical controllability, in the sense that it implies classical controllability, and one notion, weak local controllability, is weaker, in the sense that it is implied by classical controllability. In fact, in a way (see Theorem \ref{T:Contr}) weak local controllability is the strongest notion of controllability implied by controllability that preserves the poset-causal structure. Similar notions exist for observability for poset-causal systems, and it turns out that they are related through duality, as in the classical setting. Finally, we investigate how our notions of controllability and observability can be used to perform a Kalman type reduction of the system in a way that preserves the poset-causal structure.

Poset-causal systems are decentralized systems that consist of interconnected subsystems labeled $1,2,\ldots,p$. The interconnections are modeled by a partial order $\succeq$ on the set $P=\{1,2,\ldots,p\}$. Hence, the pair $\cP=(P,\succeq)$ is a partially ordered set, or poset for short. In this setting subsystem $j$ can `influence' subsystem $i$ in case $j \succeq i$. For reasons of notational convenience, the order in the current paper is reversed compared to the order used in the work of Shah and Parrilo. We assume that each subsystem is locally given by an input-state-output model with an input space $\cU_i=\BR^{m_i}$, a state space $\cX_i=\BR^{n_i}$ and an output space $\cY_i=\BR^{r_i}$, with $m_i,n_i,r_i\in\BZ_+$. The local outputs $y_i(t)\in\cY_i$ and local states $x_i(t)\in\cX_i$ are determined by the states $x_j(t)\in\cX_j$ and inputs $u_j(t)\in\cU_j$ of subsystems $j$ that can `influence' subsystem $i$ via interconnected state space system equations
\begin{equation}\label{eqSubsys}
\begin{aligned}
\dot{x}_i(t) & = \sum_{j\in{\uparrow}i} A_{ij}x_j(t) + \sum_{j\in{\uparrow}i} B_{ij}u_j(t), \quad x_i(0)=x_{i,0},\\
y_i(t) & = \sum_{j\in{\uparrow}i} C_{ij}x_j(t) + \sum_{j\in{\uparrow}i} D_{ij}u_j(t),\quad t\geq 0,
\end{aligned}
\end{equation}
where ${\uparrow}i=\{j\mid j\succeq i\}$ is the set of subsystems $j$ that are upstream of subsystem $i$ in the communication network. That is, if $j\in{\uparrow}i$, then subsystem $j$ can `influence' subsystem $i$. Here $x_{i,0}\in\BR^{n_i}$ is the initial state of subsystem $i$ and $A_{ij}\in\BR^{n_i\times n_j}$, $B_{ij}\in\BR^{n_i\times m_j}$, $C_{ij}\in\BR^{r_i\times n_j}$ and $D_{ij}\in\BR^{r_i\times m_j}$ are given matrices whenever $j\succeq i$. In case $j\not\succeq i$, set $A_{ij}$, $B_{ij}$, $C_{ij}$ and $D_{ij}$ equal to zero matrices of appropriate sizes and define
\begin{equation}\label{BlockPart}
A=[A_{ij}]_{i,j=1}^p,\quad B=[B_{ij}]_{i,j=1}^p,\quad C=[C_{ij}]_{i,j=1}^p,\quad D=[D_{ij}]_{i,j=1}^p.
\end{equation}
Then the combined input, state and output signals
\begin{align*}
&u(t)=(u_1(t),\ldots,u_p(t))^\intercal\in \cU:=\bigoplus_{i=1}^p\cU_i,\quad
x(t)=(x_1(t),\ldots,x_p(t))^\intercal\in \cX:=\bigoplus_{i=1}^p\cX_i,\\
&\qquad\qquad\qquad\qquad\qquad y(t)=(y_1(t),\ldots,y_p(t))^\intercal\in \cY:=\bigoplus_{i=1}^p\cY_i
\end{align*}
satisfy
\begin{equation}\label{eqLTI}
\begin{aligned}
\dot{x}(t) & = Ax(t)+Bu(t), \quad x(0)=x_0\in\cX,\\
y(t)   & = Cx(t) +Du(t),\quad t\geq 0.
\end{aligned}
\end{equation}
where $x_0:=(x_{1,0},\ldots,x_{p,0})^\intercal$. Hence the decentralized system \eqref{eqSubsys} can be written as a classical state space system \eqref{eqLTI} with the communication structure embedded in a prescribed block zero pattern of the system matrices determined by the underlying partial order. In particular, the state and output signals can be represented in terms of the input and initial state by the classical integral formulas
\begin{equation}\label{eqintegralform}
\begin{aligned}
x(t)&=x(x_0,u,t) = e^{At}x_0+\int_{0}^{t}e^{A(t-\tau)}Bu(\tau) \dtau,\\
y(t)&=y(x_0,u,t) = Ce^{At}x_0+\int_0^tCe^{A(t-\tau)}Bu(t)\dtau + Du(t).
\end{aligned}
\end{equation}
Since $\succeq$ is a partial order, the block zero-pattern in the system matrices \eqref{BlockPart} is invariant under block matrix multiplication (provided the block sizes are compatible for multiplication) and under matrix inversion. As a consequence, the frequency domain transfer function $F(s)=D+C(sI-A)^{-1}B$ has the same block zero-pattern as the system matrices.  Conversely, it is shown in \cite{LKR13} that if $F$ is a rational matrix function which does not have a pole at 0 and of which the values have a block zero-pattern prescribed by a partial order $\succeq$ (and even less restrictive structures), then $F$ is the transfer function of a poset-causal system associated with $\succeq$. However, it is also shown in \cite{LKR13} that it might not be possible to construct a poset-causal system whose transfer function is $F$ in such a way that the poset-causal system is stabilizable and detectable. Similar issues occur with controllability and observability.

Since the poset-causal system \eqref{eqSubsys} can be represented as a classical state space system \eqref{eqLTI} with structured system matrices, all the notions, results and constructions from classical state space theory apply. However, most of these do not preserve the block zero-pattern. For example, the \emph{reachable subspace} $\cR(A,B)$ and \emph{unobservable subspace} $\cN(C,A)$ of the state space $\cX$ given by
\begin{equation}\label{eqreachunobs}
\cR(A,B):=\im \cC(A,B) \ands \cN(C,A):=\kr \cO(C,A),
\end{equation}
with $\cC(A,B)$ and $\cO(C,A)$ the {\em controllability} and {\em observability matrices}, respectively, associated with \eqref{eqLTI}:
\begin{equation}\label{eqcCcO}
\cC(A,B) := \begin{bmatrix}
B & AB & A^2B & \ldots & A^{n-1}B
\end{bmatrix} \ands
\cO(C,A):= \cC(A^\intercal,C^\intercal)^\intercal,
\end{equation}
where $n=\sum_{j\in P} n_j$ is the state space dimension, cannot, in general, be written as direct sums of subspaces of the local state spaces $\cX_j$ for $j\in P$, so that the compression to a minimal system obtained from the Kalman decomposition will in general not have the appropriate zero-pattern, destroying the poset-causal structure.

As a result, it is not clear how to determine if a poset-causal system is minimal, that is, whether there does not exist an poset-causal system which generates the same input-output map but has smaller state space dimensions. Neither is it clear whether there exists a Kalman-type decomposition if it is also required that the block zero-pattern be preserved. New notions of controllability and observability are required that do preserve the communication structure and still preserve some of the features of the classical notions.

For the subclass of coordinated linear systems defined in \cite{RvS08} various new concepts of controllability and observability that preserve the underlying communication structure  were studied in \cite{KRvS12}, see also \cite{KRvS14b}. Coordinated linear systems are poset-causal systems for which the partial order satisfies a stronger form of transitivity defined in \cite{BES16}, namely in-ultra transitivity, see Definition \ref{D:ultatransitive} below. However, they also form a subclass of hierarchical systems \cite{FBBMTW80}, with a clear hierarchical structure, namely, the transitive closures of directed trees, and a single (main) coordinator. Most of the controllability and observability notions in \cite{KRvS12} are defined for a coordinated linear systems that consists of a single coordinator and two followers. Here, the coordinator can communicate with the followers, but the followers cannot communicte with each other or with the coordinator.

In the present paper we introduce various notions of controllability and observability for poset-causal systems related to upstream and downstream systems associated with the subsystems. When specialized to the setting of coordinated linear systems, these reduce to notions of controllability and observability studied in \cite{KRvS12}, however, the notions introduced here do not rely on the roles of ``coordinator'' or ``follower'' a subsystem may have. While it is possible to study controllability and observability for systems defined on graphs from the perspective of assigning controllers, see e.g.\ \cite{MZC14}, in this paper we do not try to assign specific roles to the subsystems. The class of coordinated linear systems is not closed under duality. However, it turns out that this is the case for the class of poset-causal systems, and we prove duality relations between the controllability and observability notions defined in this paper, as it occurs in the classical case.

Most of the notions of controllability and observability introduced here are based on variations on the classical reachable subspace and unobservable subspace, with the difference that they can be written as (orthogonal) direct sums of subspaces of the local state spaces of the subsystems. As a result of this, we are able to present a variation on the Kalman reduction formula, in Section \ref{S:Kalman}. Despite that some of the controllability and observability notions introduced here are optimal (in the sense presented in Theorems 3.3 and 4.3), this does not carry over to the Kalman reduction obtained here. Hence, although it may compress the poset-causal system to a poset-causal system with much smaller state space dimesions, it need not be the minimal poset-causal system with  the same input-output map.

We conclude this introduction with an overview of the paper. In Section \ref{S:PosetCausal} we give a more precise definition of poset-causal systems as well as various related subsystems and we present some of the preliminaries used in the remainder of the paper. Section \ref{S:UpControl} contains various notions of controllability based on the concept of downstream reachable states and we prove their relation to classical controllability. This is followed by a similar discussion of notions related to observability in Section \ref{S:DownObserve}. The duality relations between the new notions of controllability and observability are proved in Section \ref{S:Dual}. Finally, in Section \ref{S:Kalman}, we consider the problems related to the minimality of poset-causal and we employ a Kalman-type reduction of poset-causal systems.

\section{Poset-causal systems}\label{S:PosetCausal}

In this section we give a formal definition of poset-causal systems and introduce the dual of a poset-causal system. For this we require some preliminary definitions and results on order structures and matrices with associated block zero-patterns.

\subsection{Order structures}
A {\em partially ordered set}, or {\em poset}, is a pair $\cP=(P,\succeq)$ with $P$ a set and $\succeq$ a partial order on $P$. That is, $\succeq$ is a binary relation on $P$ which is
\begin{itemize}
	\item[(i)] {\em reflexive}:\ $i\succeq i$ for all $i\in P$;
	\item[(ii)] {\em transitive}:\ if $i\succeq j$ and $j\succeq k$, then $i\succeq k$ for all $i,j,k\in P$;
	\item[(iii)] {\em anti-symmetric}:\ if $i\succeq j$ and $j\succeq i$, then $i=j$ for all $i,j\in P$.
\end{itemize}
If a binary relation only satisfies (i) and (ii), then it is a \emph{pre-order}. For $i,j\in P$ we write $i\succ j$ if $i\succeq j$ and $i\neq j$. Also, $i\preceq j$ and $i\prec j$ means $j\succeq i$ and $j\succ i$, respectively. In the sequel we will only consider finite posets, usually with $P=\{1,2,\ldots,p\}$ for some positive integer $p$. Given a subset $R\subseteq P$ of a poset $\cP=(P,\succeq)$ we define its {\em downstream set} ${\downarrow}R$ and its {\em upstream set} ${\uparrow}R$ as
\begin{equation}\label{UpDownOff}
	\begin{aligned}
		&{\downarrow} R :=\{i\in P \colon j\succeq i \mbox{ for some }j\in R\},\\
		& {\uparrow} R :=\{i\in P \colon i\succeq j \mbox{ for some }j\in R\}.
	\end{aligned}
\end{equation}
In the case that $R$ is a singleton, say $R=\{i\}$, we simply write ${\downarrow} i$ and ${\uparrow} i$. By reflexivity $R\subseteq {\uparrow}R$ and $R\subseteq {\downarrow}R$. By transitivity ${\uparrow}({\uparrow}R) \subseteq {\uparrow}R$ and ${\downarrow}({\downarrow}R) \subseteq {\downarrow}R$. Together with reflexivity, this gives ${\uparrow}({\uparrow}R) = {\uparrow}R$ and ${\downarrow}({\downarrow}R) = {\downarrow}R$. In addition, we define
\[
{\dhdown} R:=\{i\in {\downarrow}R \colon i\not\in R\} \ands
{\dhup} R:=\{i\in {\uparrow}R \colon i\not\in R\},
\]
again abbreviated to ${\dhdown}i$ and ${\dhup}i$, respectively, when $R=\{i\}$.

Any poset $\cP=(P,\succeq)$ can be represented by a digraph $\cG_\cP=(P,E_{\succeq})$ where the nodes in $\cG_\cP$ are the elements of $P$ and $E_{\succeq}=\{(i,j)\in P\times P: i\succeq j\}$ is the set of directed edges in $\cG_\cP$. The Hasse diagram of a poset $\cP=(P,\succeq)$ can be identified with the digraph $\cG^{\downarrow}_\cP=(P,E^{\downarrow}_{\succeq})$, where
\[
E^{\downarrow}_{\succeq} := \{(i,j)\in P\times P: i\succ j\text{ and  there is no $k\in P$ such that $i\succ k \succ j$} \}.
\]
The digraph $\cG_\cP^{\downarrow}$ omits all the directed edges that correspond to reflexivity and transitivity.

\begin{ex}
Consider the poset $\cP=(P,\succeq)$ with $P=\{1,2,3,4\}$ determined by $1\succeq2$, $3\succeq2$ and $2\succeq4$ (along with loops and edges induced by transitivity). The digraph $\cG_\cP=(P,E_\succeq)$ and Hasse diagram $\cG_\cP^{\downarrow}=(P,E_{\succeq}^{\downarrow})$ of $\cP$ are given by:
%
	
	\begin{tikzpicture}[line cap=round,line join=round,>=triangle 45,x=1.0cm,y=1.0cm]
	\clip(-1,0) rectangle (10,3.1159485608468165);
	\draw [->,line width=0.6pt] (0.5,2.) -- (1.5,2.);
	\draw [->,line width=0.6pt] (2.5,2.) -- (1.5,2.);
	\draw [->,line width=0.6pt] (1.5,2.) -- (1.5,1.);
	\draw [->,line width=0.6pt] (0.5,2.) -- (1.5,1.);
	\draw [->,line width=0.6pt] (2.5,2.) -- (1.5,1.);
	\draw [line width=0.6pt] (0.3,2.2) circle (0.2828427124746192cm);
	\draw [line width=0.6pt] (1.5,2.3) circle (0.3cm);
	\draw [line width=0.6pt] (2.7,2.2) circle (0.2828427124746193cm);
	\draw [line width=0.6pt] (1.5,0.7) circle (0.3cm);
	\begin{scriptsize}
	\draw [fill=black] (0.5,2.) circle (2.5pt);
	\draw[color=black] (0.3,2.2) node {$1$};
	\draw [fill=black] (1.5,2.) circle (2.5pt);
	\draw[color=black] (1.5,2.3) node {$2$};
	\draw [fill=black] (2.5,2.) circle (2.5pt);
	\draw[color=black] (2.7,2.2) node {$3$};
	\draw [fill=black] (1.5,1.) circle (2.5pt);
	\draw[color=black] (1.5,0.7) node {$4$};
	\draw[color=black] (2.7,1.2) node {$\cG_\cP$};
	\end{scriptsize}

	\draw [->,line width=0.6pt] (6.5,2.) -- (7.5,2.);
	\draw [->,line width=0.6pt] (8.5,2.) -- (7.5,2.);
	\draw [->,line width=0.6pt] (7.5,2.) -- (7.5,1.);
	\begin{scriptsize}
	\draw [fill=black] (6.5,2.) circle (2.5pt);
	\draw[color=black] (6.3,2.2) node {$1$};
	\draw [fill=black] (7.5,2.) circle (2.5pt);
	\draw[color=black] (7.5,2.3) node {$2$};
	\draw [fill=black] (8.5,2.) circle (2.5pt);
	\draw[color=black] (8.7,2.2) node {$3$};
	\draw [fill=black] (7.5,1.) circle (2.5pt);
	\draw[color=black] (7.5,0.7) node {$4$};
	\draw[color=black] (8.7,1.2) node {$\cG^{\downarrow}_\cP$};
	\end{scriptsize}
	\end{tikzpicture}
	
	We also illustrate some upstream and downstream sets:
	\begin{align*}
	{\downarrow}1 = \{1,2,4\}, \quad {\dhdown}1 = \{2,4\}, \quad {\uparrow}4 = \{1,2,3,4\}=P \quad \ands \quad {\dhup}2=\{1,3\}.
	\end{align*}
\end{ex}

In \cite{BES16,BES17,BES18} stronger notions of transitivity are studied, which are defined as follows.

\begin{defn}\label{D:ultatransitive}
	A binary relation $\cT=(P,T)$ is said to be
	\begin{itemize}
		\item[(i)]  an \emph{in-ultra transitive} relation if for all $i,j,k \in P$, $(i,j)\in T$ and $(k,j)\in T$ implies that $(i,k)\in T$ or $(k,i)\in T$, and
		\item[(ii)] an \emph{out-ultra transitive} relation if for all $i,j,k \in P$, $(i,j)\in T$ and $(i,k)\in T$ implies that $(j,k)\in T$ or $(k,j)\in T$.
	\end{itemize}
\end{defn}

We point out here that the Hasse diagrams of the underlying poset of coordinated linear systems as studied in \cite{KRvS12,KRvS14a,KRvS14b}, in the most general setting, are so-called out-tree forests, that is, a collection of directed trees that each have a single node (global coordinator) that all other nodes are directed away from. By Theorem 4.1 in \cite{BES16} out-tree forests correspond to posets which are in-ultra transitive.

\begin{defn}\label{D:dualposet}
	For a poset $\mathcal{P} = (P,\succeq)$, we define its \emph{dual poset} as $\mathcal{P}_d=(P,\succeq_d)$ where
	$$
	j\succeq_di \iff i\succeq j
	$$
	for each $j,i\in P$.
\end{defn}

In the sequel we will use the symbols ${\downarrow}_d$ and ${\uparrow}_d$ to indicate downstream and upstream sets, respectively, associated with the dual poset $\cP_d$. Clearly we have ${\uparrow}_d R = {\downarrow}R$ and ${\downarrow}_d R = {\uparrow}R$, for any $R\subseteq P$. The following lemma follows directly from the definitions.

\begin{lem}\label{L:in/out-ultra}
	The dual of an in-ultra transitive relation is and out-ultra transitive relation.
\end{lem}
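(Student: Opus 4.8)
The statement to prove is Lemma \ref{L:in/out-ultra}: \emph{The dual of an in-ultra transitive relation is an out-ultra transitive relation.}

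The plan is to argue directly from the definitions, unwinding the duality. Suppose $\cT = (P,T)$ is in-ultra transitive, and let $\cT_d = (P,T_d)$ be its dual, where $(i,j)\in T_d \iff (j,i)\in T$. I want to show $\cT_d$ is out-ultra transitive. So fix $i,j,k\in P$ with $(i,j)\in T_d$ and $(i,k)\in T_d$; I must deduce $(j,k)\in T_d$ or $(k,j)\in T_d$.

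The key step is the translation: by definition of $T_d$, the hypotheses $(i,j)\in T_d$ and $(i,k)\in T_d$ are equivalent to $(j,i)\in T$ and $(k,i)\in T$. Now apply in-ultra transitivity of $\cT$ with the roles matched appropriately: in Definition \ref{D:ultatransitive}(i), the pattern is that $(a,c)\in T$ and $(b,c)\in T$ force $(a,b)\in T$ or $(b,a)\in T$. Taking $a=j$, $b=k$, $c=i$, the two facts $(j,i)\in T$ and $(k,i)\in T$ give $(j,k)\in T$ or $(k,j)\in T$. Translating back through the dual, $(j,k)\in T$ means $(k,j)\in T_d$, and $(k,j)\in T$ means $(j,k)\in T_d$; either way we obtain $(j,k)\in T_d$ or $(k,j)\in T_d$, which is exactly out-ultra transitivity of $\cT_d$. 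Since the implication between the two notions is perfectly symmetric under swapping the order, the converse direction (dual of out-ultra transitive is in-ultra transitive) follows by the same argument, or by noting $(\cT_d)_d = \cT$.

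There is essentially no obstacle here; the lemma is a routine bookkeeping exercise in matching indices under the order reversal, and the only thing to be careful about is lining up the variables in Definition \ref{D:ultatransitive} correctly so that the ``common second coordinate'' in the in-ultra condition becomes the ``common first coordinate'' in the out-ultra condition. I would write this as a two or three line proof.
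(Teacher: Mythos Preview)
Your proof is correct and matches the paper's approach: the paper simply states that the lemma ``follows directly from the definitions'' without writing out a proof, and your argument is exactly the direct verification one would give.
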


\begin{ex}\label{ex5posets}
Consider posets $\cP_1$, $\cP_2$, $\cP_3$, $\cP_4$, $\cP_5$ and $\cP_6$ with Hasse diagrams given by:
	\begin{figure}[H]
		\centering
		\begin{tikzpicture}[line cap=round,line join=round,>=triangle 45,x=1.0cm,y=1.0cm]
		\clip(-0.5,1.5) rectangle (18,4.5);
		\draw [->,line width=0.6pt] (1.,4.) -- (0.5,3.);
		\draw [->,line width=0.6pt] (1.,4.) -- (1.5,3.);
		\draw [->,line width=0.6pt] (3.,4.) -- (2.5,3.);
		\draw [->,line width=0.6pt] (3.,4.) -- (3.5,3.);
		\draw [->,line width=0.6pt] (3.5,3.) -- (3.,2.);
		\draw [->,line width=0.6pt] (3.5,3.) -- (3.5,2.);
		\draw [->,line width=0.6pt] (3.5,3.) -- (4.,2.);
		\draw [<-,line width=0.6pt] (5.5,3.) -- (5.,4.);
		\draw [<-,line width=0.6pt] (5.5,3.) -- (6.,4.);
		\draw [->,line width=0.6pt] (7,4.) -- (7.,3.);
		\draw [->,line width=0.6pt] (7,3.) -- (7.,2.);
		\draw [->,line width=0.6pt] (8,3.) -- (7.,2.);
		\draw [->,line width=0.6pt] (9.5,4.) -- (9.,3.);
		\draw [->,line width=0.6pt] (9.5,4.) -- (10.,3.);
		\draw [->,line width=0.6pt] (10.5,4.) -- (10.,3.);
		\draw [->,line width=0.6pt] (11.5,4.) -- (11.5,3.);
		\draw [->,line width=0.6pt] (11.5,3.) -- (11.5,2.);
		
		\begin{scriptsize}
		\draw [fill=black] (1.,4.) circle (2.5pt);
		\draw[color=black] (1,4.3) node {$1$};
		\draw [fill=black] (0.5,3.) circle (2.5pt);
		\draw[color=black] (0.3,3) node {$2$};
		\draw [fill=black] (1.5,3.) circle (2.5pt);
		\draw[color=black] (1.7,3) node {$3$};
		\draw[color=black] (0.5,4) node {$\cG^{\downarrow}_{\cP_1}$};
		
		\draw [fill=black] (3.,4.) circle (2.5pt);
		\draw[color=black] (3,4.3) node {$1$};
		\draw [fill=black] (2.5,3.) circle (2.5pt);
		\draw[color=black] (2.3,3) node {$3$};
		\draw [fill=black] (3.5,3.) circle (2.5pt);
		\draw[color=black] (3.7,3) node {$2$};
		\draw [fill=black] (3.,2.) circle (2.5pt);
		\draw[color=black] (3,1.7) node {$4$};
		\draw [fill=black] (3.5,2.) circle (2.5pt);
		\draw[color=black] (3.5,1.7) node {$5$};
		\draw [fill=black] (4.,2.) circle (2.5pt);
		\draw[color=black] (4,1.7) node {$6$};
		\draw[color=black] (2.5,4) node {$\cG_{\cP_2}^{\downarrow}$};
		
		\draw [fill=black] (5.,4.) circle (2.5pt);
		\draw[color=black] (5,4.3) node {$3$};
		\draw [fill=black] (6.,4.) circle (2.5pt);
		\draw[color=black] (6,4.3) node {$2$};
		\draw [fill=black] (5.5,3.) circle (2.5pt);
		\draw[color=black] (5.5,2.7) node {$1$};
		\draw[color=black] (4.5,4) node {$\cG_{\cP_3}^{\downarrow}$};
		
		\draw [fill=black] (7,4.) circle (2.5pt);
		\draw[color=black] (7,4.3) node {$1$};
		\draw [fill=black] (7.,3.) circle (2.5pt);
		\draw[color=black] (6.8,3) node {$2$};
		\draw [fill=black] (7,2.) circle (2.5pt);
		\draw[color=black] (6.8,2) node {$4$};
		\draw [fill=black] (8.,3.) circle (2.5pt);
		\draw[color=black] (8.2,3) node {$3$};
		\draw[color=black] (7.5,4) node {$\cG_{\cP_4}^{\downarrow}$};
		
		\draw [fill=black] (9.5,4.) circle (2.5pt);
		\draw[color=black] (9.5,4.3) node {$1$};
		\draw [fill=black] (9.,3.) circle (2.5pt);
		\draw[color=black] (9,2.7) node {$3$};
		\draw [fill=black] (10.,3.) circle (2.5pt);
		\draw[color=black] (10,2.7) node {$4$};
		\draw [fill=black] (10.5,4.) circle (2.5pt);
		\draw[color=black] (10.5,4.3) node {$2$};
		\draw[color=black] (9,4) node {$\cG_{\cP_5}^{\downarrow}$};
		
		\draw [fill=black] (11.5,4.) circle (2.5pt);
		\draw[color=black] (11.5,4.3) node {$1$};
		\draw [fill=black] (11.5,3) circle (2.5pt);
		\draw[color=black] (11.7,3) node {$2$};
		\draw [fill=black] (11.5,2) circle (2.5pt);
		\draw[color=black] (11.7,2) node {$3$};
		\draw[color=black] (11.2,4) node {$\cG_{\cP_6}^{\downarrow}$};
		
		\end{scriptsize}
		\end{tikzpicture}
	\end{figure}

\noindent	
Then $\cG^{\downarrow}_{\cP_1}$ is the underlying digraph of a coordinated linear system with one coordinator and two followers,  $\cG_{\cP_2}^{\downarrow}$ is an out-tree corresponding to a more intricate coordinated linear system, $\cG_{\cP_3}^{\downarrow}$ is the dual of $\cG_{\cP_1}^\downarrow$ and hence corresponds to an out-ultra transitive ordering (in-tree), $\cG_{\cP_4}^\downarrow$ is also an in-tree, $\cG_{\cP_5}^\downarrow$ is the Hasse diagram of a poset which is neither in-ultra transitive nor out-ultra transitive and lastly $\cG_{\cP_6}$ corresponds to a complete order.
\end{ex}

\subsection{Block matrices with prescribed zero-patterns}

Given some $n\in\BZ_+$, we will say $\un{n} = (n_1,n_2,\ldots,n_p)\in\BZ_+^p$ is a {\em partition} of $n$ if $|\un{n}|:=n_1+n_2+\ldots+n_p = n$. Let $\un{n}=(n_1,n_2,\ldots,n_p)\in\BZ_+^{p}$ and $\un{m}=(m_1,m_2,\ldots,m_q)\in\BZ_+^{q}$ be two given partitions. We will write $G=[G_{ij}]\in\BR^{\un{n}\times \un{m}}$, in which case it is to be understood that $G_{ij}\in\BR^{n_i \times m_j}$. If $\un{r}\in\BZ_+^p$ is another partition, then matrices $G\in\BR^{\un{n}\times \un{m}}$ and $H\in \BR^{\un{r}\times \un{n}}$ are said to be {\em compatible for the block matrix multiplication} $HG$.

Given a binary relation $\cT=(P,T)$ with $P=\{1,\ldots, p\}$ for some positive integer $p$, define the set of matrices
\[
\cI_\cT:=\{G=[g_{ij}]\in\BR^{p\times p} \colon \mbox{$g_{ij}=0$ if $\{k\in P\colon (j,k),(k,i)\in T\}=\emptyset$}\}.
\]
By the theorem on page 258 of \cite{D70} and the subsequent remark on page 259 it follows that the set $\cI_\cT$ forms a subalgebra of $\BR^{p\times p}$ if and only if the relation $\cT$ is transitive. It is then referred to as the {\em incidence algebra} associated with $\cT$.  If $\cP=(P,\succeq)$ is a poset, then $\cI_\cP$ is a unital matrix algebra which can also be written as
\[
\cI_\cP=\{G=[g_{ij}]\in\BR^{p\times p} \colon \mbox{$g_{ij}=0$ if $j\not\succeq i$}\}.
\]
By analogy of the incidence algebras defined above, we define block matrices with zero-pattern prescribed by a partial order.

\begin{defn}\label{D:BlockIncSet}
	Given a poset $\mathcal{P}=(P,\succeq)$, with $P=\{1,\ldots,p\}$ and partitions $\underline{n},\underline{m}\in\BZ_+^p$, we define the {\em block incidence vector space} $\mathcal{I}^{\underline{n}\times \underline{m}}_{\mathcal{P}}\subseteq \BR^{\un{n}\times \un{m}}$ as the subspace
	\begin{align*}
		\mathcal{I}^{\underline{n}\times \underline{m}}_{\cP} := \{G=[G_{ij}]\in\BR^{\underline{n}\times \underline{m}} \colon G_{ij}=0 \text{ if $j\not\succeq i$} \}.
	\end{align*}
\end{defn}

The fact that the set $P=\{1,2,\ldots,p\}$ is also ordered is not relevant in this paper, the choice to indicate $P$ in this way is just to clarify the relation to the columns and rows of the block matrices. Furthermore, since we only consider finite sets, $P$ can always be take in this form.

\begin{ex}\label{exMatrixZerostructures}
	Consider the posets $\cP_3$ and $\cP_5$ given in Example \ref{ex5posets} and partitions $\un{n},\un{m}\in\BZ_+^p$. The matrices $G$ and $H$ given below exhibit the block zero structures of matrices in the incidences spaces $\mathcal{I}^{\underline{n}\times \underline{m}}_{\cP_3}$ and $\mathcal{I}^{\underline{n}\times \underline{m}}_{\cP_5}$  respectively:
	\begin{align*}
		G=\begin{bmatrix}
			 G_{11} & G_{12} & G_{13}  \\
			 0 & G_{22} & 0  \\
			 0 & 0 & G_{33}
 	\end{bmatrix}
\quad \ands \quad
	H=\begin{bmatrix}
	 H_{11} & 0 & 0 & 0 \\
	0 & H_{22} & 0 & 0 \\
	 H_{31} & 0 & H_{33} & 0 \\
	 H_{41} & H_{42} & 0 & H_{44}
 	\end{bmatrix}.
	\end{align*}
\end{ex}

The numbering of nodes of a poset $\cP$ can always be done in such a way that the matrices in the corresponding incidence spaces are block lower triangular (for $\cP_3$ in Example \ref{exMatrixZerostructures} reorder $(1,2,3)\mapsto (3,2,1)$).

By arguments similar to those in \cite{D70} it follows that the block zero structure is preserved under block matrix multiplication, provided the block matrices are compatible for block matrix multiplication. The block zero structure is also invariant under inversion (since the inverse of an invertible matrix $A$ is contained in its double commutant $\{A\}''$).

\begin{prop}\label{P:MultClosed}
	Let $\mathcal{P}=(P,\succeq)$ be a poset with $p$ elements and let $\underline{n},\underline{m},\un{r}\in\BZ_+^p$. If $G\in \mathcal{I}^{\underline{r}\times \underline{n}}_{\cP}$ and $H\in \mathcal{I}^{\underline{n}\times \underline{m}}_{\cP}$, then $G H\in \mathcal{I}^{\underline{r}\times \underline{m}}_{\cP}$. If $G\in \mathcal{I}^{\underline{n}\times \underline{n}}_{\cP}$ and $\det G\neq 0$, then $G^{-1}\in \mathcal{I}^{\underline{n}\times \underline{n}}_{\cP}$.
\end{prop}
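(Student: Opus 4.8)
The plan is to treat the two assertions separately, establishing the multiplicativity statement first because the inversion statement will rely on it.

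For the first statement I would argue by a direct computation with block entries. Write $G=[G_{ij}]\in\mathcal{I}^{\underline{r}\times\underline{n}}_{\cP}$ and $H=[H_{jk}]\in\mathcal{I}^{\underline{n}\times\underline{m}}_{\cP}$. Since the ``middle'' partition $\underline{n}$ is common to both, $G$ and $H$ are compatible for block matrix multiplication and the $(i,k)$ block of $GH$ equals $\sum_{j=1}^{p}G_{ij}H_{jk}\in\BR^{r_i\times m_k}$. Fix indices $i,k$ with $k\not\succeq i$. For each $j$ the summand $G_{ij}H_{jk}$ is nonzero only if $G_{ij}\neq0$ and $H_{jk}\neq0$, which by the defining zero-patterns of $\mathcal{I}^{\underline{r}\times\underline{n}}_{\cP}$ and $\mathcal{I}^{\underline{n}\times\underline{m}}_{\cP}$ forces $j\succeq i$ and $k\succeq j$; transitivity of $\succeq$ then gives $k\succeq i$, contradicting the choice of $i,k$. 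Hence every summand vanishes, so $(GH)_{ik}=0$, which is exactly the condition for $GH\in\mathcal{I}^{\underline{r}\times\underline{m}}_{\cP}$. This step is routine bookkeeping and presents no real difficulty.

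For the inversion statement, put $n:=|\underline{n}|$ and first record that $\mathcal{I}^{\underline{n}\times\underline{n}}_{\cP}$ is a \emph{unital subalgebra} of $\BR^{n\times n}$: it is a linear subspace by definition, it contains the identity $I_n$ (whose off-diagonal blocks are zero and whose diagonal blocks, indexed by pairs $(i,i)$ with $i\succeq i$, are unconstrained), and it is closed under multiplication by the first part of the proposition applied with $\underline{r}=\underline{n}=\underline{m}$. Consequently, for any $G\in\mathcal{I}^{\underline{n}\times\underline{n}}_{\cP}$ each power $G^k$ lies in $\mathcal{I}^{\underline{n}\times\underline{n}}_{\cP}$, and hence so does the whole polynomial algebra $\BR[G]$. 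It therefore suffices to show that $G^{-1}\in\BR[G]$ whenever $\det G\neq0$; this is the substance of the double-commutant remark preceding the proposition, as $\BR[G]\subseteq\{G\}''$. One may see it in either of two ways: left multiplication by $G$ is an injective linear endomorphism of the finite-dimensional space $\BR[G]$ (injectivity because $G$ is invertible in $\BR^{n\times n}$), hence surjective, so some $q(G)\in\BR[G]$ satisfies $Gq(G)=I_n$; alternatively, Cayley--Hamilton applies, its characteristic polynomial having nonzero constant term $(-1)^n\det G$, and one solves for $G^{-1}$ as a polynomial in $G$. Either way $G^{-1}\in\BR[G]\subseteq\mathcal{I}^{\underline{n}\times\underline{n}}_{\cP}$, which finishes the proof.

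The only subtlety worth flagging is the temptation to prove the inversion part by relabelling $P$ along a linear extension of $\succeq$ so that matrices in $\mathcal{I}^{\underline{n}\times\underline{n}}_{\cP}$ become block lower triangular and then invoking the block-triangular inversion formula. That route gives block lower triangularity of $G^{-1}$ for free, but the zero-pattern prescribed by $\succeq$ is strictly finer than lower triangularity — an incomparable pair $i,j$ with $j<i$ in the chosen numbering still forces a zero $(i,j)$ block, about which triangularity says nothing — so one would have to track that finer pattern through the inversion formula. The algebraic argument above sidesteps this by never leaving the algebra $\mathcal{I}^{\underline{n}\times\underline{n}}_{\cP}$, which is precisely why proving multiplicativity first is essential.
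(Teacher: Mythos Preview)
Your proof is correct and follows essentially the same approach as the paper, which does not give a detailed argument but merely points to \cite{D70} for the multiplication statement and to the double-commutant fact $G^{-1}\in\{G\}''$ for the inversion statement. Your explicit verification that $G^{-1}\in\BR[G]\subseteq\mathcal{I}^{\underline{n}\times\underline{n}}_{\cP}$ via Cayley--Hamilton (or the injective-hence-surjective argument on $\BR[G]$) is exactly what underlies the paper's one-line remark, and your observation about the inadequacy of a mere block-triangular argument is a nice clarification that the paper leaves implicit.
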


Throughout the paper we work with block compressions associated with subsets of $P$. Note that we have defined partitions in such a way that zero entries are permitted. It will be convenient in this paper to define block compressions by simply setting some of the entries in the partitions equal to zero.

\begin{defn}\label{D:BlockCompressions}
	Let $P=\{1,\ldots,p\}$ and let $R,S\subseteq P$. Let $G\in\BR^{\un{n}\times \un{m}}$ for partitions $\un{n},\un{m}\in\BZ_+^p$. Then $G(R,S)$ denotes the block matrix in $\BR^{\un{n}_R\times \un{m}_S}$ where
	\begin{align*}
		&\un{n}_R=(n_{1,R},\ldots,n_{p,R})\in\BZ_+^p,  \ \mbox{with } n_{j,R}=
		\left\{ \begin{array}{cc} 0 & \mbox{ if $j\not\in R$}\\ n_j & \mbox{ if $j\in R$}\end{array}
		\right.\\
		&\un{m}_S=(m_{1,S},\ldots,m_{q,S})\in\BZ_+^q,  \ \mbox{with } m_{j,S}=
		\left\{ \begin{array}{cc} 0 &\!\! \mbox{ if $j\not\in S$}\\ m_j & \!\! \mbox{ if $j\in S$}\end{array}
		\right.
	\end{align*}
	and where $G(R,S)=[\wtilG_{ij}]_{i,j=1,\ldots,p}$ is defined by
	\begin{align*}
		\wtilG_{ij}=G_{ij} \mbox{ if $i\in R$ and $j\in S$},\quad \mbox{and $\wtilG_{ij}$ vacuous if $i\notin R$ or $j\notin S$.}
	\end{align*}
\end{defn}

If $R$ is a singleton, say $R=\{i\}$, we write $G(i,S)$ and likewise we write $G(R,j)$ if $S=\{j\}$. For one-sided compressions, we follow Matlab notation, and write $G(:,S)$ in case $R=P$, or $G(R,:)$ in case $S=P$.

\begin{ex}
	This example illustrates the ideas in Proposition \ref{P:MultClosed} and Definition \ref{D:BlockCompressions}. Consider the poset $\cP_1$ in Example \ref{ex5posets} and partitions $\underline{n},\underline{m},\un{r}\in\BZ_+^p$ as well as matrices $G\in\cI_{\cP_1}^{\un{n}\times\un{m}}$, $H\in\cI_{\cP_1}^{\un{m}\times\un{r}}$ and $K\in\cI_{\cP_1}^{\un{n}\times\un{n}}$. Looking at the product $GH$, we have
	\begin{align*}
	\begin{bmatrix}
	G_{11} & 0 & 0\\
	G_{21} & G_{22} & 0\\
	G_{31} & 0 & G_{33}
 	\end{bmatrix}
 	\begin{bmatrix}
 	H_{11} & 0 & 0\\
 	H_{21} & H_{22} & 0\\
 	H_{31} & 0 & H_{33}
 	\end{bmatrix} =
 	\begin{bmatrix}
 	G_{11}H_{11} & 0 & 0\\
 	G_{21}H_{11} + G_{22}H_{21} & G_{22}H_{22} & 0\\
 	G_{31}H_{11} +G_{33}H_{31} & 0 & G_{33}H_{33}
 	\end{bmatrix}
	\end{align*}
	which illustrates that $GH\in\cI_{\cP_1}^{\un{n}\times\un{r}}$. Similarly, if the inverse of $K$ exists, then
	\begin{align*}
		\begin{bmatrix}
		K_{11} & 0 & 0\\
		K_{21} & K_{22} & 0\\
		K_{31} & 0 & K_{33}
		\end{bmatrix}^{-1} = \begin{bmatrix}
		K_{11}^{-1} & 0 & 0\\
		- K_{22}^{-1}K_{21}K_{11}^{-1} & K_{22}^{-1} & 0\\
		- K_{33}^{-1}K_{31}K_{11}^{-1} & 0 & K_{33}^{-1}
		\end{bmatrix}	
	\end{align*}
	and again, we see that $K^{-1}\in\cI_{\cP_1}^{\un{n}\times\un{n}}$.
	
	Consider the poset $\cP_5=(P_5,\succeq)$ in Example \ref{ex5posets} and partitions $\un{n},\un{m}\in\BZ_+^p$ as well as a matrix $H\in\mathcal{I}^{\underline{n}\times \underline{m}}_{\cP_5}$ such as in Example \ref{exMatrixZerostructures}. For $\cP_5$, we have ${\downarrow}1=\{1,3,4\}$, ${\uparrow}3=\{1,3\}$. Thus
	\begin{align*}
	H({\downarrow}1,1) = \begin{bmatrix}
	H_{11} \\ H_{31} \\ H_{41}
	\end{bmatrix},\quad
	H({\uparrow}3,:) = \begin{bmatrix}
	H_{11} & 0 & 0 & 0\\
	H_{31} & 0 & H_{33} & 0
	\end{bmatrix} \quad \ands \quad
	H(4,2)=H_{42}.
	\end{align*}
\end{ex}

\begin{thm}\label{thmdownstreammult}
	Given a poset $\cP=(P,\succeq)$ with $P=\{1,2,\ldots,p\}$, partitions $\un{n},\un{m},\un{r}\in\BZ_+^p$ and subsets $Q,S\subseteq P$, for any block matrices $G\in \BR^{\underline{r}\times \underline{n}}$ and $H\in \mathcal{I}^{\underline{n}\times \underline{m}}_{\cP}$ we have
	\[
	(GH)(Q,S) = G(Q,R)H(R,S),\quad \mbox{for any subset $R\subseteq P$ with ${\downarrow}S\subseteq R$.}
	\]
	In particular $(GH)(Q,S) = G(Q,{\downarrow}S)H({\downarrow}S,S)$.
\end{thm}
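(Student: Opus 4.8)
The plan is to verify the identity block entry by block entry, the only real input being the zero pattern of $H$.

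First I would record that all the block matrices in sight are compatible for block multiplication and have matching shapes. By Definition \ref{D:BlockCompressions} we have $G(Q,R)\in\BR^{\un{r}_Q\times\un{n}_R}$ and $H(R,S)\in\BR^{\un{n}_R\times\un{m}_S}$; these share the middle partition $\un{n}_R$, so the product $G(Q,R)H(R,S)$ is defined and lies in $\BR^{\un{r}_Q\times\un{m}_S}$, which is exactly the space in which $(GH)(Q,S)$ lives. Moreover, on both sides the $(i,j)$ block is vacuous unless $i\in Q$ and $j\in S$, so it suffices to compare the blocks indexed by $i\in Q$, $j\in S$.

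Next I would compute those blocks. By the definition of block compression together with ordinary block matrix multiplication, for $i\in Q$ and $j\in S$,
\[
\big[(GH)(Q,S)\big]_{ij}=(GH)_{ij}=\sum_{k=1}^{p}G_{ik}H_{kj},\qquad
\big[G(Q,R)H(R,S)\big]_{ij}=\sum_{k\in R}G_{ik}H_{kj}.
\]
Hence the claimed identity is equivalent to the assertion that $\sum_{k\in P\setminus R}G_{ik}H_{kj}=0$ for all $i\in Q$ and $j\in S$, and for this it is enough to show $H_{kj}=0$ whenever $k\notin R$ and $j\in S$. This is the step where the hypotheses enter: since $H\in\cI^{\un{n}\times\un{m}}_{\cP}$, Definition \ref{D:BlockIncSet} gives $H_{kj}=0$ unless $j\succeq k$, i.e.\ unless $k\in{\downarrow}j$; and for $j\in S$ we have ${\downarrow}j\subseteq{\downarrow}S\subseteq R$. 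Thus $H_{kj}\neq 0$ forces $k\in R$, and contrapositively $k\notin R$ gives $H_{kj}=0$ for every $j\in S$, as required. This proves $(GH)(Q,S)=G(Q,R)H(R,S)$, and the ``in particular'' statement is just the admissible choice $R={\downarrow}S$, for which ${\downarrow}S\subseteq R$ holds trivially.

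I do not expect a genuine obstacle here; the argument is essentially the observation that the $j$-th block column of a matrix in $\cI_{\cP}^{\un{n}\times\un{m}}$ is supported on the block rows indexed by ${\downarrow}j$. The only point requiring care is the bookkeeping with the zero-padded partitions and vacuous blocks of Definition \ref{D:BlockCompressions}, namely checking that the one-sided and two-sided compressions really do yield block matrices of matching shapes so that the right-hand product is defined and is being compared against the correct object on the left.
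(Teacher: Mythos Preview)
Your proof is correct and follows essentially the same approach as the paper: both reduce to a blockwise comparison for $i\in Q$, $j\in S$, write $(GH)_{ij}=\sum_{k\in P}G_{ik}H_{kj}$, and observe that the terms with $k\notin R$ vanish because $H_{kj}=0$ unless $k\in{\downarrow}j\subseteq{\downarrow}S\subseteq R$. Your write-up is slightly more explicit about the compatibility of the block shapes, but the argument is otherwise identical.
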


\begin{proof}[\bf{Proof}]
	The block matrices $(GH)(Q,S)$ and $G(Q,R)H(R,S)$ are equally sized. Therfore, the matrices are equal if the corresponding block entries $(GH)(i,j)$ and $G(i,R)H(R,j)$ are equal for each $i\in Q$ and $j\in S$. For $j,k\in P$, if $k\notin{\downarrow}j$, then $j\not\succeq k$ and thus $H(k,j)=0$, because $H\in\mathcal{I}^{\underline{n}\times \underline{m}}_{\cP}$. Let $i \in Q$ and $j\in S$ be arbitrary. Assume that ${\downarrow}S \subseteq R \subseteq P$. Now ${\downarrow}j\subseteq {\downarrow}S\subseteq R$, so if $k\notin R$, then $k\notin {\downarrow}j$ and hence $H(k,j)=0$. With this in mind, consider the $(i,j)^{\text{th}}$ block entry of the matrix $GH$:
	\begin{align*}
		(GH)(i,j)
		& = \sum_{k=1}^pG(i,k)H(k,j)
		= \sum_{k\notin R}G(i,k)H(k,j) + \sum_{k\in R}G(i,k)H(k,j)\\
		& = 0 + \sum_{k\in R}G(i,k)H(k,j)
		= G(i,R)H(R,j)
	\end{align*}
	which completes the proof.
\end{proof}

Finally, we define the block identity matrix $I_{\un{n}}\in\BR^{\un{n}\times\un{n}}$ with respect to a partition $\un{n}\in\BZ_+^p$ as the block diagonal matrix in $\BR^{\un{n}\times\un{n}}$ with identity matrices as diagonal blocks. Then, for any $S\subseteq P$, the matrix $I_{\un{n}}(:,S)$ can be viewed as the embedding of $\BR^{\un{n}_S}$ into $\BR^{\un{n}}$ and $I_{\un{n}}(S,:)$ as the projection from $\BR^{\un{n}}$ onto $\BR^{\un{n}_S}$.

\begin{cor}\label{C:InvSubs}
	Let $H\in\mathcal{I}^{\underline{n}\times \underline{m}}_{\cP}$. For any $S\subseteq P$, we have $H (\BR^{\un{m}_{{\downarrow}S}}) \subseteq \BR^{\un{n}_{{\downarrow}S}}$. In particular, if $\un{m}=\un{n}$, then $\BR^{\un{m}_{{\downarrow}S}}$ is an $H$-invariant subspace.
\end{cor}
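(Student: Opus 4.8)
The plan is to deduce Corollary~\ref{C:InvSubs} directly from Theorem~\ref{thmdownstreammult}, using the description of the one-sided block compressions $I_{\un{n}}(:,S)$ and $I_{\un{m}}(:,S)$ as the natural embeddings $\BR^{\un{m}_S}\hookrightarrow\BR^{\un{m}}$ and projections $\BR^{\un{n}}\twoheadrightarrow\BR^{\un{n}_S}$ discussed just before the statement. The key identity to establish is
\[
H\big(\BR^{\un{m}_{{\downarrow}S}}\big) = \ran\big(H\, I_{\un{m}}(:,{\downarrow}S)\big) = \ran\big(H({\downarrow}S,{\downarrow}S)\big)\subseteq \ran\big(I_{\un{n}}(:,{\downarrow}S)\big)=\BR^{\un{n}_{{\downarrow}S}}.
\]

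First I would write $H\,I_{\un{m}}(:,{\downarrow}S)$, where $I_{\un{m}}$ is the block identity so that $I_{\un{m}}\in\mathcal{I}^{\un{m}\times\un{m}}_{\cP}$ trivially; then $H\,I_{\un{m}}$ is a product of a general matrix with a matrix in the incidence space, so Theorem~\ref{thmdownstreammult} applies with $G=H$, the incidence matrix being $I_{\un{m}}$, $Q=P$ and $S$ replaced by ${\downarrow}S$. It gives
\[
\big(H I_{\un{m}}\big)(:,{\downarrow}S) = H(:,R)\, I_{\un{m}}(R,{\downarrow}S)
\]
for any $R\supseteq{\downarrow}({\downarrow}S)={\downarrow}S$; taking $R={\downarrow}S$ and noting $I_{\un{m}}({\downarrow}S,{\downarrow}S)=I_{\un{m}_{{\downarrow}S}}$ and $H I_{\un{m}}=H$, this reads $H(:,{\downarrow}S)=H(:,{\downarrow}S)$, which by itself is a tautology. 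So instead I would apply the theorem with $Q={\dhup}\cdots$—more precisely, the right move is to compress on the \emph{left} as well: consider the block identity $I_{\un{n}}$ and write $H = I_{\un{n}}\,H$, but since $I_{\un n}$ sits on the left we cannot feed it to the theorem. The clean route is: apply Theorem~\ref{thmdownstreammult} to the product $I_{\un{n}}\cdot H$ is not allowed, so I instead observe directly from the definition of $\mathcal{I}^{\un{n}\times\un{m}}_{\cP}$ that if $x\in\BR^{\un{m}}$ is supported on the block coordinates indexed by ${\downarrow}S$, i.e. $x=I_{\un{m}}(:,{\downarrow}S)\,z$ for some $z\in\BR^{\un{m}_{{\downarrow}S}}$, then the $i$-th block of $Hx$ is $\sum_{j\in{\downarrow}S} H_{ij}z_j$. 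The point is that $H_{ij}=0$ whenever $j\not\succeq i$; and if $j\in{\downarrow}S$ and $j\succeq i$ then $i\in{\downarrow}S$ as well, because ${\downarrow}({\downarrow}S)={\downarrow}S$ (transitivity, established in the Order Structures subsection). Hence every nonzero block of $Hx$ is indexed by $i\in{\downarrow}S$, i.e. $Hx\in\BR^{\un{n}_{{\downarrow}S}}$.

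Concretely, the argument I would write is: fix $S\subseteq P$ and $x=(x_1,\dots,x_p)^\intercal\in\BR^{\un{m}_{{\downarrow}S}}$, meaning $x_j=0$ for $j\notin{\downarrow}S$. For each $i\notin{\downarrow}S$, the $i$-th block of $Hx$ equals $\sum_{j\in{\downarrow}S}H_{ij}x_j$; for each such $j$ we have $j\in{\downarrow}S$ while $i\notin{\downarrow}S={\downarrow}({\downarrow}S)$, so $j\not\succeq i$ (otherwise $i\in{\downarrow}\{j\}\subseteq{\downarrow}({\downarrow}S)={\downarrow}S$), whence $H_{ij}=0$ because $H\in\mathcal{I}^{\un{n}\times\un{m}}_{\cP}$. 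Thus the $i$-th block of $Hx$ vanishes for all $i\notin{\downarrow}S$, i.e.\ $Hx\in\BR^{\un{n}_{{\downarrow}S}}$, proving $H(\BR^{\un{m}_{{\downarrow}S}})\subseteq\BR^{\un{n}_{{\downarrow}S}}$. For the ``in particular'' clause, if $\un{m}=\un{n}$ then $\BR^{\un{n}_{{\downarrow}S}}\subseteq\BR^{\un{n}}$ and the inclusion just proved says exactly $H(\BR^{\un{n}_{{\downarrow}S}})\subseteq\BR^{\un{n}_{{\downarrow}S}}$, i.e.\ this subspace is $H$-invariant.

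I do not expect a genuine obstacle here: the only subtlety is choosing whether to present this as a one-line consequence of Theorem~\ref{thmdownstreammult} (applied to $GH$ with $G=I_{\un{n}}(:,T)$ projected appropriately) or as the elementary block-support computation above, and I would favor the latter since it is self-contained and avoids contorting the compression notation. If one does want the slick version, the cleanest is: the statement $H(\BR^{\un{m}_{{\downarrow}S}})\subseteq\BR^{\un{n}_{{\downarrow}S}}$ is equivalent to $I_{\un{n}}(P\setminus{\downarrow}S,:)\,H\,I_{\un{m}}(:,{\downarrow}S)=0$, and the latter compression is $\big(H\big)(P\setminus{\downarrow}S,{\downarrow}S)$, every block $H_{ij}$ of which has $i\notin{\downarrow}S$, $j\in{\downarrow}S$ hence $j\not\succeq i$ hence $H_{ij}=0$; so the matrix is zero. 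Either way the essential fact used is the idempotency ${\downarrow}({\downarrow}S)={\downarrow}S$ together with the defining zero pattern of $\mathcal{I}^{\un{n}\times\un{m}}_{\cP}$.
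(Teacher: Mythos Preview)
Your direct block-support argument is correct and complete: if $x$ is supported on ${\downarrow}S$ and $i\notin{\downarrow}S$, then every $H_{ij}$ with $j\in{\downarrow}S$ vanishes because $j\succeq i$ would force $i\in{\downarrow}({\downarrow}S)={\downarrow}S$. So the proof stands.

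However, you misread Theorem~\ref{thmdownstreammult} when you wrote that applying it to $I_{\un n}\,H$ ``is not allowed.'' The theorem puts the incidence-space hypothesis on the \emph{right} factor and lets the \emph{left} factor $G$ be arbitrary. Taking $G=I_{\un n}$ (generic) and the right factor equal to your $H\in\mathcal{I}^{\un n\times\un m}_{\cP}$, with $Q=P$ and $S$ replaced by ${\downarrow}S$, gives
\[
H(:,{\downarrow}S)=(I_{\un n}H)(:,{\downarrow}S)=I_{\un n}(:,{\downarrow}S)\,H({\downarrow}S,{\downarrow}S),
\]
so $H(\BR^{\un m_{{\downarrow}S}})=\ran H(:,{\downarrow}S)\subseteq\ran I_{\un n}(:,{\downarrow}S)=\BR^{\un n_{{\downarrow}S}}$. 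This is exactly the one-line route the paper takes. Your elementary computation is essentially the proof of this special case of the theorem unrolled by hand; it buys self-containment at the cost of repeating work already packaged in Theorem~\ref{thmdownstreammult}.
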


\begin{proof}[\bf Proof]
	Apply Theorem \ref{thmdownstreammult} with $G=I_{\un{n}}$, $Q=P$ and $S={\downarrow}S$. This gives
	\[
	H(:,{\downarrow}S)=(I_\un{n}H) (:,{\downarrow}S) = I_\un{n}(:,{\downarrow}S) H({\downarrow}S,{\downarrow}S).
	\]
	Therefore, we have
	\[
	H (\BR^{\un{m}_{{\downarrow}S}}) =\im B(:,{\downarrow}S) =  \im I_\un{n}(:,{\downarrow}S) H({\downarrow}S,{\downarrow}S) \subseteq \im I_\un{n}(:,{\downarrow}S) =  \BR^{\un{n}_{{\downarrow}S}}.\qedhere
	\]
\end{proof}

\subsection{Poset-causal systems}\label{SS:PosetCausal}

Now that the required preliminaries are dealt with, we can define poset-causal systems.

\begin{defn}\label{D:PosetSys}
	Let $\mathcal{P} = (P,\succeq)$ be a poset with $P=\{1,\ldots,p\}$.  A poset-causal system $\Si_\cP$ (with underlying poset $\cP$) is a linear time invariant system \eqref{eqLTI} with structured system matrices
	\begin{equation}\label{PCincl}
		A\in \cI_\cP^{\un{n}\times\un{n}},\quad
		B\in \cI_\cP^{\un{n}\times\un{m}},\quad
		C\in \cI_\cP^{\un{r}\times\un{n}},\quad
		D\in \cI_\cP^{\un{r}\times\un{m}},
	\end{equation}
	for $\un{n},\un{m},\un{r}\in\BZ_+^p$ and some initial state $x_0\in\cX=\BR^{\un{n}}$.
\end{defn}

Equivalently, the poset-causal system $\Si_\cP$ is given by the interconnected equations \eqref{eqSubsys}, for $i\in P$, determined by the non-zero blocks in the system matrices \eqref{PCincl} and the components of the initial state and with local input, state and output spaces of dimensions
\[
\dim(\cU_i)=m_i,\quad \dim (\cX_i)= n_i,\quad \dim (\cY_i)=r_i.
\]
In short, we will write $\Si_\cP\sim(A,B,C,D,x_0;\cP)$ to indicate the poset-causal system $\Si_\cP$, or even, $\Si_\cP\sim(A,B,C,D)$ when the poset is clear from the context and the initial state is either clear from the context or unspecified.

For a poset-causal systems $\Si_\cP\sim(A,B,C,D)$, since $A(i,j)=0$ if $j\not\succeq i$, for the transpose $A^{\intercal}$ we have $A^{\intercal}(j,i)=0$ if $j\not\succeq i$, that is, if $i\not\succeq_d j$. Consequently, we have
\[
A^\intercal\in \cI_{\cP_d}^{\un{n}\times\un{n}},\quad
B^\intercal\in \cI_{\cP_d}^{\un{m}\times\un{n}},\quad
C^\intercal\in \cI_{\cP_d}^{\un{n}\times\un{r}},\quad
D^\intercal\in \cI_{\cP_d}^{\un{m}\times\un{r}}.
\]
This observation justifies the following definition of the dual system.

\begin{defn}\label{D:DualSys}
	For a poset-causal system $\Sigma_{\mathcal{P}}\sim(A,B,C,D)$, its \emph{dual system} is defined to be the poset-causal system $\Sigma_{\mathcal{P}_d}\sim(A^\intercal,C^\intercal,B^\intercal,D^\intercal)$.
\end{defn}

\begin{rem}\label{R:CLS}
An important subclass of poset-causal systems is that of coordinated linear systems introduced in \cite{RvS08} which is also a subclass of hierarchical systems \cite{FBBMTW80}, see also \cite{vSBKKMPR11,KRvS12,KRvS14a,KRvS14b} for results on system and control theory for coordinated linear systems. As mentioned before, the distinguishing feature of coordinated linear systems is that the Hasse diagram of the poset $\cP$ is an out-tree forrest, or equivalently, the partial order is in-ultra transitive, see definition \ref{D:ultatransitive}. Then, by Lemma \ref{L:in/out-ultra}, the dual system of a coordinated linear system is a poset-causal system with an underlying partial order that is out-ultra transitive, which correspond to Hasse diagrams that are in-tree forrest. In particular, the dual of a coordinated linear system is not a coordinated linear system unless when the partial order happens to be a total order.
\end{rem}

For a finite dimensional space $\cV=\bigoplus_{j\in P} \cV_j$ and $S\subset P$ we define
\begin{equation}\label{eqpartsub}
	\cV_S:=\bigoplus_{j\in S} \cV_j \subseteq \cV.
\end{equation}

In our analysis of poset-causal systems $\Si_\cP$, various derived systems play a role. Firstly, the {\em global system} is just the overall classical state space system \eqref{eqLTI} with state and output given by \eqref{eqintegralform}.

Other derived systems are determined by a fixed choice of $i\in P$. We define the {\em $i$-th local system} where we only consider the impact of the local input $u_i$ on the subsystem $i$:
\begin{equation}\label{eqLocalSys}
	\begin{aligned}
		\dot{x}^{i}(t) &= A_{ii}x^{i}(t)+B_{ii}u_i(t), \qquad {x}^{i}(0)={x}^{i}_0\in \cX_i,\\
		y^{i}(t) &= C_{ii}x^{i}(t)+D_{ii}u_i(t), \qquad t\geq 0.
	\end{aligned}
\end{equation}
Here, the state $x^i$ and output $y^i$ at some final time $t>0$ are given by
\begin{equation}\label{eqLocalSysSol}
	\begin{aligned}
		x^i(t)=x^i(x_0^i,u_i,t) &= e^{A_{ii}(t)} {x}^{i}_0 + \int_{0}^{t}e^{A_{ii}(t-\tau)}B_{ii}u_i(\tau)\dtau,\\
		y^{i}(t)=y^i(x_0^i,u_i,t) &= C_{ii}e^{A_{ii}(t)} {x}^{i}_0 + \int_{0}^{t}C_{ii}e^{A_{ii}(t-\tau)}B_{ii}u_i(\tau)\dtau
		+ D_{ii}u_i(t).
	\end{aligned}
\end{equation}
Next, for the {\em $i$-th downstream system} one considers the impact of the $i$-th input $u_i$ on the local states that are downstream from subsystem $i$:
\begin{equation}\label{eqDownSys}
	\begin{aligned}
		\dot{x}^{{\downarrow}i}(t) &= A({\downarrow}i,{\downarrow}i)x^{{\downarrow}i}(t)+B({\downarrow}i,i)u_i(t), \qquad {x}^{{\downarrow}i}(0)={x}^{{\downarrow}i}_0\in \cX_{{\downarrow} i},\\
		y^{{\downarrow}i}(t) &= C({\downarrow}i,{\downarrow}i)x^{{\downarrow}i}(t)+D({\downarrow}i,i)u_i(t), \qquad t\geq 0.
	\end{aligned}
\end{equation}
Note that the state and output signals, $x^{{\downarrow}i}$ and $y^{{\downarrow}i}$, take values in the spaces $\cX_{{\downarrow} i}$ and $\cY_{{\downarrow} i}$, respectively, defined as in \eqref{eqpartsub}. In this case the state ${x}^{{\downarrow}i}$ and output $y^{{\downarrow}i}$ at some final time $t>0$ are given by
\begin{align}
	x^{{\downarrow}i}(t)=x^{{\downarrow}i}(x_0^{{\downarrow}i},u_i,t)
	&= e^{A({\downarrow}i,{\downarrow}i)t} {x}^{{\downarrow}i}_0 + \int_{0}^{t}e^{A({\downarrow}i,{\downarrow}i)(t-\tau)}B({\downarrow}i,i)u_i(\tau)\dtau, \notag\\
	y^{{\downarrow}i}(t)=y^{{\downarrow}i}(x_0^{{\downarrow}i},u_i,t)
	&= C({\downarrow}i,{\downarrow}i)e^{A({\downarrow}i,{\downarrow}i)t} {x}^{{\downarrow}i}_0 +\label{eqDownSysSol}\\
	& \ \ \ +\int_{0}^{t}C({\downarrow}i,{\downarrow}i)e^{A({\downarrow}i,{\downarrow}i)(t-\tau)}B({\downarrow}i,i)u_i(\tau)\dtau +
	D({\downarrow}i,i) u_i(t).\notag
\end{align}
By Corollary \ref{C:InvSubs} it follows that the system matrices partition as
\[
A=\mat{A({\downarrow}i,{\downarrow}i) & \!* \\ 0 & \!*},\,
B=\mat{B({\downarrow}i,{\downarrow}i) & \!* \\ 0 & \!*},\,
C=\mat{C({\downarrow}i,{\downarrow}i) & \!* \\ 0 & \!*},\,
D=\mat{D({\downarrow}i,{\downarrow}i) & \!* \\ 0 & \!*},
\]
with $*$ indicating unspecified entries.

Lastly, for the {\em $i$-th upstream system}, one considers the impact of the system on the $i$-th output component generated by the subsystems that are upstream of the $i$-th subsystem:
\begin{equation}\label{eqUpSys}
	\begin{aligned}
		\dot{x}^{{\uparrow}i}(t) &= A({\uparrow}i,{\uparrow}i)x^{{\uparrow}i}(t)+B({\uparrow}i,{\uparrow}i)u^{{\uparrow}i}(t), \qquad {x}^{{\uparrow}i}(0)={x}^{{\uparrow}i}_0\in \cX_{{\uparrow} i},\\
		y^{{\uparrow}i}(t) &= C(i,{\uparrow}i)x^{{\uparrow}i}(t)+D(i,{\uparrow}i)u^{{\uparrow}i}(t), \qquad t\geq 0.
	\end{aligned}
\end{equation}
where the input and state signals $u^{{\uparrow}i}$ and $x^{{\uparrow}i}$ take values in $\cU_{{\uparrow}i}$ and $\cX_{{\uparrow}i}$, respectively, while the output signal $y^{{\uparrow}i}$ takes values in $\cY_i$. In this case, applying Corollary \ref{C:InvSubs} to the dual system, it follows that system matrices partition as
\[
A=\mat{A({\uparrow}i,{\uparrow}i) & \!0 \\ * & \!*},\,
B=\mat{B({\uparrow}i,{\uparrow}i) & \!0 \\ * & \!*},\,
C=\mat{C({\uparrow}i,{\uparrow}i) & \!0 \\ * & \!*},\,
D=\mat{D({\uparrow}i,{\uparrow}i) & \!0 \\ * & \!*},
\]
with $*$ indicating unspecified entries. As a consequence we see that the state and output of the $i$-th upstream system are easily obtained from the global system via:
\[
x^{{\uparrow}i}(x_0^{{\uparrow}i},u^{{\uparrow}i},t)=I({\uparrow}i,:)x(x_0,u,t) \ands
y^{{\uparrow}i}(x_0^{{\uparrow}i},u^{{\uparrow}i},t)=y_i(x_0,u,t),
\]
with $y_i$ the $i$-th component of the global output signal $y$ and
where $x_0$ and $u$ can be any initial state and input satisfying
\[
x_0^{{\uparrow}i}=I({\uparrow}i,:) x_0 \ands u^{{\uparrow}i}(t) = I({\uparrow}i,:) u(t).
\]

%
%

The relation between the signals of the $i$-th downstream system and the global and local systems is less straightforward.

\begin{lem}\label{L:SysTrajRels}
	Consider a poset-causal system $\Sigma_\cP\sim(A,B,C,D)$, a given input $u=\bigoplus_{i\in P}u_i$ and initial state $x_0 = \bigoplus_{i\in P}x_{i,0}$. Set
	$x_{0}^{{\downarrow}i} :=\bigoplus_{j\in {\downarrow}i} x_{j,0}\in \cX_{{\downarrow}i}$ and
	$\wtil{x}_{0,i} :=I_{\underline{n}}({\downarrow}i,i) x_{i,0}\in \cX_{{\downarrow}i}$ for all $i\in P$. Then
	\begin{equation}\label{eqSolxyDown}
		\begin{aligned}
			x(x_0,u,t) & = \sum_{i\in P} I_{\underline{n}}(:,{\downarrow}i)x^{{\downarrow}i}(\wtil{x}_{0,i},u_i,t) \ands\\
			y(x_0,u,t) & = \sum_{i\in P} I_{\underline{n}}(:,{\downarrow}i)y^{{\downarrow}i}(\wtil{x}_{0,i},u_i,t), \quad t\geq 0.
		\end{aligned}
	\end{equation}
	Furthermore, for all $i\in P$ we have
	\begin{align}
		&x^{{\downarrow}i}_i(x^{{\downarrow}i}_0,u_i,t)=x^i(x_{i,0},u_i,t) \ands  y^{{\downarrow}i}_i(u_i,t)=y^i(x_0^i,u_i,t), \quad t\geq 0,\label{eqxiiyii}
	\end{align}
	where $x^{{\downarrow}i}_j(x^{{\downarrow}i}_0,u_i,t)$ is the component of $x^{{\downarrow}i}(x^{{\downarrow}i}_0,u_i,t)$ taking values in $\cX_j$ and where $y^{{\downarrow}i}_j(x^{{\downarrow}i}_0,u_i,t)$ is the component of $y^{{\downarrow}i}(x^{{\downarrow}i}_0,u_i,t)$ taking values in $\cY_j$. In particular, for all $i\in P$ we have
	\begin{equation}\label{eqxy_ixy^i}
		\begin{aligned}
			&x_i(x_0,u,t)= x^i(x_{i,0},u_i,t) + \sum_{j\in{\dhup}i}x^{{\downarrow}j}_i(\wtil{x}_{j,0},u_j,t),\\
			&y_i(x_0,u,t) = y^i(x_{i,0},u_i,t) + \sum_{j\in{\dhup}i}y^{{\downarrow}j}_i(\wtil{x}_{j,0},u_j,t), \quad t\geq 0.
		\end{aligned}
	\end{equation}
\end{lem}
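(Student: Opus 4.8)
The plan is to exploit the block-triangular structure of the system matrices recorded just before the lemma, namely that relative to the splitting $\cX = \cX_{{\downarrow}i}\oplus \cX_{P\setminus{\downarrow}i}$ one has $A = \sbm{A({\downarrow}i,{\downarrow}i) & * \\ 0 & *}$, and likewise for $B,C,D$, so that $\cX_{{\downarrow}i}$ is $A$-invariant (Corollary \ref{C:InvSubs}). The key computational fact is that compression commutes with the exponential and with convolution on such invariant pieces: $I_{\un n}({\downarrow}i,:)\, e^{At} = e^{A({\downarrow}i,{\downarrow}i)t}\, I_{\un n}({\downarrow}i,:)$, and, using Theorem \ref{thmdownstreammult} on the columns indexed by $i$ (since ${\downarrow}\{i\}\subseteq {\downarrow}i$), $e^{A({\downarrow}i,{\downarrow}i)(t-\tau)}B({\downarrow}i,i) = I_{\un n}({\downarrow}i,:)\, e^{A(t-\tau)}\, B(:,i)$. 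These identities turn the defining integral formulas \eqref{eqDownSysSol} for $x^{{\downarrow}i}$ into compressions of pieces of the global formula \eqref{eqintegralform}.

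First I would prove \eqref{eqSolxyDown}. Start from the global formula \eqref{eqintegralform}, split $x_0 = \sum_{i\in P} I_{\un n}(:,i)x_{i,0}$ and $u = \sum_{i\in P} I_{\un m}(:,i)u_i$, and distribute $e^{At}$ and $e^{A(t-\tau)}B$ over these sums. For the $u_i$-term one gets $\int_0^t e^{A(t-\tau)} B(:,i) u_i(\tau)\dtau$; insert $I_{\un n}(:,{\downarrow}i)I_{\un n}({\downarrow}i,:)$ in front of $e^{A(t-\tau)}$, which is legitimate because $\im(e^{A(t-\tau)}B(:,i))\subseteq\cX_{{\downarrow}i}$ by the triangular structure, and then apply the commutation identities above to recognise the integrand as $I_{\un n}(:,{\downarrow}i)\,e^{A({\downarrow}i,{\downarrow}i)(t-\tau)}B({\downarrow}i,i)u_i(\tau)$; the boundary term $Du$ is handled by the identical manipulation with $D$ in place of $e^{A(t-\tau)}B$. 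For the initial-state term, note $e^{At} I_{\un n}(:,i)x_{i,0} = I_{\un n}(:,{\downarrow}i)e^{A({\downarrow}i,{\downarrow}i)t}\wtil x_{0,i}$ since $\wtil x_{0,i} = I_{\un n}({\downarrow}i,i)x_{i,0}$ and $I_{\un n}(:,i) = I_{\un n}(:,{\downarrow}i)I_{\un n}({\downarrow}i,i)$ (as $i\in{\downarrow}i$). Summing over $i$ and comparing with \eqref{eqDownSysSol} gives the $x$-formula; the $y$-formula is obtained in exactly the same way from the second line of \eqref{eqintegralform} and \eqref{eqDownSysSol}.

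Next, \eqref{eqxiiyii}: the $i$-th block row of $A({\downarrow}i,{\downarrow}i)$ restricted to the $i$-th block column is $A_{ii}$, and similarly for $B,C,D$; moreover by Corollary \ref{C:InvSubs} applied with $S=\{i\}$ the one-dimensional-in-blocks subspace $\cX_i$ is invariant for $A({\downarrow}i,{\downarrow}i)$ only after noting $i$ is minimal in ${\downarrow}i$ — more precisely, the relevant point is that the $i$-th component of a trajectory of the $i$-th downstream system, driven by $u_i$ with the $i$-th component of the initial state equal to $x_{i,0}$, satisfies the same scalar-block ODE \eqref{eqLocalSys} as $x^i$, because feeding into $\dot x^{{\downarrow}i}_i$ only $A_{ij}x^{{\downarrow}i}_j$ with $j\succeq i$ occurs, and among $j\in{\downarrow}i$ the only one with $j\succeq i$ is $j=i$. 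Hence $\dot x^{{\downarrow}i}_i = A_{ii}x^{{\downarrow}i}_i + B_{ii}u_i$ with $x^{{\downarrow}i}_i(0)=x_{i,0}$, which by uniqueness equals $x^i$; the output identity follows. Finally \eqref{eqxy_ixy^i} is just the $i$-th block component of \eqref{eqSolxyDown}: $I_{\un n}(:,{\downarrow}j)$ contributes to block $i$ exactly when $i\in{\downarrow}j$, i.e.\ $j\in{\uparrow}i$, and the $j=i$ summand is $x^{{\downarrow}i}_i = x^i(x_{i,0},u_i,t)$ by \eqref{eqxiiyii}, while the remaining summands range over $j\in{\dhup}i$.

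The main obstacle is bookkeeping with the compression notation: one must be careful that $I_{\un n}({\downarrow}i,:)e^{At} = e^{A({\downarrow}i,{\downarrow}i)t}I_{\un n}({\downarrow}i,:)$ genuinely follows from $A$-invariance of $\cX_{{\downarrow}i}$ (expand the exponential series and use $(A^k)({\downarrow}i,{\downarrow}i) = A({\downarrow}i,{\downarrow}i)^k$, which is Theorem \ref{thmdownstreammult} applied iteratively since ${\downarrow}({\downarrow}i)={\downarrow}i$), and that the insertion of $I_{\un n}(:,{\downarrow}i)I_{\un n}({\downarrow}i,:)$ is an identity on the relevant range and not on all of $\cX$. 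Everything else is routine linearity and uniqueness of solutions of linear ODEs.
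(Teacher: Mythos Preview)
Your overall strategy coincides with the paper's: split $x_0$ and $u$ over $i\in P$, use $A$-invariance of $\cX_{{\downarrow}i}$ to pass from $e^{At}$ to $e^{A({\downarrow}i,{\downarrow}i)t}$, and recognise the resulting pieces as the downstream trajectories \eqref{eqDownSysSol}; the paper then proves \eqref{eqxiiyii} by the integral formula rather than the ODE argument, but your uniqueness-of-solutions reasoning is equally valid, and \eqref{eqxy_ixy^i} is derived the same way in both.

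There is, however, one genuine slip: the identity $I_{\un n}({\downarrow}i,:)\, e^{At} = e^{A({\downarrow}i,{\downarrow}i)t}\, I_{\un n}({\downarrow}i,:)$ is \emph{false} as an operator identity on $\cX$. With the block form $A=\sbm{A({\downarrow}i,{\downarrow}i)&*\\0&*}$ one gets $I_{\un n}({\downarrow}i,:)e^{At}=\sbm{e^{A({\downarrow}i,{\downarrow}i)t}&*}$, whereas $e^{A({\downarrow}i,{\downarrow}i)t}I_{\un n}({\downarrow}i,:)=\sbm{e^{A({\downarrow}i,{\downarrow}i)t}&0}$; the off-block $*$ need not vanish. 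Your justification via $(A^k)({\downarrow}i,{\downarrow}i)=A({\downarrow}i,{\downarrow}i)^k$ only yields $I_{\un n}({\downarrow}i,:)e^{At}I_{\un n}(:,{\downarrow}i)=e^{A({\downarrow}i,{\downarrow}i)t}$, not the one-sided version. The correct global identity---and the one the paper uses---is the \emph{embedding} form
\[
e^{At}\,I_{\un n}(:,{\downarrow}i)=I_{\un n}(:,{\downarrow}i)\,e^{A({\downarrow}i,{\downarrow}i)t},
\]
which does follow from invariance of $\cX_{{\downarrow}i}$. Your argument is easily repaired: since $B(:,i)=I_{\un n}(:,{\downarrow}i)B({\downarrow}i,i)$, apply the embedding identity directly to obtain $e^{A(t-\tau)}B(:,i)=I_{\un n}(:,{\downarrow}i)e^{A({\downarrow}i,{\downarrow}i)(t-\tau)}B({\downarrow}i,i)$, with no need to insert a projection at all. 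With this correction the rest of your proposal goes through and matches the paper.
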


\begin{proof}[\bf Proof]
	Firstly we note that for any $i\in P$, we have ${\downarrow}i=\{i\}\cup{\dhdown}i$ and that if $j\in {\dhdown}i$, then $j\not\succeq i$. In particular, for $j\in {\dhdown}i$ we have $A_{ij}=0$, $B_{ij}=0$, $C_{ij}=0$ , $D_{ij}=0$ and $e^{A}(i,j)=0$. This implies that
	\begin{equation}\label{eqDownPullThrough}
		\begin{aligned}
			I_{\underline{n}}(i,{\downarrow}i)A({\downarrow}i,{\downarrow}i) = A_{ii}I_{\underline{n}}(i,{\downarrow}i) \ands
			I_{\underline{n}}(i,{\downarrow}i)C({\downarrow}i,{\downarrow}i) = C_{ii}I_{\underline{n}}(i,{\downarrow}i).
		\end{aligned}
	\end{equation}
	The first identity yields $I_{\underline{n}}(i,{\downarrow}i)e^{A({\downarrow}i,{\downarrow}i)t} = e^{A_{ii}t}I_{\underline{n}}(i,{\downarrow}i)$. Furthermore, by Theorem \ref{thmdownstreammult} we have
	\[
	I_{\underline{n}}(i,{\downarrow}i)B({\downarrow}i,i)=B(i,i)=B_{ii} \ands
	I_{\underline{n}}(i,{\downarrow}i)D({\downarrow}i,i)=D(i,i)=D_{ii},
	\]
	while Corollary \ref{C:InvSubs} implies that $ e^{At}I_{\underline{n}}(:,{\downarrow}i) = I_{\underline{n}}(:,{\downarrow}i)e^{A({\downarrow}i,{\downarrow}i)t}$. Next observe that
	\begin{align*}
		x(x_0,u,t)
		& =  e^{A t}x_0 + \int_{0}^{t}e^{A(t-\tau)}Bu(\tau) \dtau\\
		&=e^{A t}\sum_{i\in P} I_\un{n}(:,i)x_{i,0}+\int_{0}^{t}e^{A(t-\tau)}\sum_{i\in P}B(:,i)u_i(\tau) \dtau\\
		&=\sum_{i\in P} e^{A t} I_\un{n}(:,i)x_{i,0} +\int_{0}^{t}e^{A(t-\tau)}B(:,i)u_i(\tau) \dtau \\
		&=\sum_{i\in P} e^{A t}I_\un{n}(:,{\downarrow}i) I_\un{n}({\downarrow}i,i)x_{i,0} +\int_{0}^{t}e^{A(t-\tau)}I_{\underline{n}}(:,{\downarrow}i)B({\downarrow}i,i)u_i(\tau) \dtau \\
		&=\sum_{i\in P} I_\un{n}(:,{\downarrow}i) e^{A({\downarrow}i,{\downarrow}i) t} \wtil{x}_{i,0} +\int_{0}^{t}I_{\underline{n}}(:,{\downarrow}i) e^{A({\downarrow}i,{\downarrow}i)(t-\tau)}B({\downarrow}i,i)u_i(\tau) \dtau \\
		&=\sum_{i\in P} I_\un{n}(:,{\downarrow}i)
		\Bigl(e^{A({\downarrow}i,{\downarrow}i) t} \wtil{x}_{i,0} +\int_{0}^{t} e^{A({\downarrow}i,{\downarrow}i)(t-\tau)}B({\downarrow}i,i)u_i(\tau) \dtau \Bigr) \\
		&=\sum_{i\in P} I_\un{n}(:,{\downarrow}i) x^{{\downarrow}i}(\wtil{x}_{0,i},u_i,t).
	\end{align*}
	Hence the identity for $x(x_0,u,t)$ in \eqref{eqSolxyDown} holds. A similar argument also gives the identity for $y(x_0,u,t)$ in \eqref{eqSolxyDown}.
	
	In order to prove the two identities in \eqref{eqxiiyii}, we consider the $i$-th components of the solutions given in \eqref{eqDownSysSol}:
	\begin{align*}
		x^{{\downarrow}i}_i(x^{{\downarrow}i}_0,u_i,t)
		& = I_{\underline{n}}(i,{\downarrow}i)x^{{\downarrow}i}(x^{{\downarrow}i}_0,u_i,t)\\
		& = I_{\underline{n}}(i,{\downarrow}i)e^{A({\downarrow}i,{\downarrow}i)t} {x}^{{\downarrow}i}_0 + I_{\underline{n}}(i,{\downarrow}i) \int_{0}^{t}e^{A({\downarrow}i,{\downarrow}i)(t-\tau)}B({\downarrow}i,i)u_i(\tau)\dtau \\
		&= e^{A_{ii}t}x_0^i+\int_{0}^{t}e^{A_{ii}(t-\tau)} B_{ii}u_i(\tau)\dtau = x^i(u_i,t).
	\end{align*}
	A similar computation gives the identity for $y^{{\downarrow}i}_i(x_0^{{\downarrow}i},u_i,t)$. The two identities in \eqref{eqxy_ixy^i} follow by combining \eqref{eqSolxyDown} and \eqref{eqxiiyii} noting that in \eqref{eqxiiyii} only the $i$-th component of the initial state $x^{{\downarrow}i}_0$ is relevant, so that $x^{{\downarrow}i}_0$ in the left hand sides of both equations may be replaced by $\wtil{x}_{i,0}$.
\end{proof}

\section{Downstream reachable states and upstream controllability}\label{S:UpControl}

In this section we investigate various notions of controllability for poset-causal systems that respect the partitioning of the state space and the associated block zero-pattern of the system matrices. Two of these concepts are generalizations of controllability notions that were defined in \cite{KRvS12} for coordinated linear systems with one leader system and two follower subsystems ($\cP_1$ in Example \ref{ex5posets}). In our approach to poset-causal systems, we do not identify leaders and followers, but rather make use of the concept of downstream reachable states.

\subsection{Downstream reachable states}\label{SS:DownReach}

Let $\Si_\cP\sim(A,B,C,D)$ be a poset-causal system. For $i\in P$, the \emph{$i$-downstream reachable set} $\mathcal{R}_i(A,B)$ consists of the states that are reachable in the $i$-th downstream system \eqref{eqDownSys}, i.e., vectors in the subspace $\cX_{{\downarrow} i}$ that are reachable in the $i$-th downstream input-state system
$$
\dot{x}^{{\downarrow} i}(t) = A({\downarrow} i,{\downarrow} i)x^{{\downarrow} i}(t) + B({\downarrow}i,i)u_i(t), \quad x^{{\downarrow} i}(0)=0.
$$
Thus, vectors $\xi$ in $\cR_i(A,B)$ are given by the integral formula
\[
\xi = x^{{\downarrow} i}(0,u_i,t) = \int_{0}^{t}e^{A({\downarrow}i,{\downarrow}i)(t-\tau)}B({\downarrow}i,i)u_i(\tau) \dtau.
\]
Equivalently, the $i$-downstream reachable set is given by
\[
\cR_i(A,B)=\cR(A({\downarrow} i,{\downarrow} i),B({\downarrow} i,i))=\im \mathcal{C}(A({\downarrow} i,{\downarrow} i),B({\downarrow} i,i)).
\]
If $\xi\in\cR_i(A,B)$, then we say that $\xi$ is \emph{$i$-downstream reachable}. We note that $\cR_i(A,B)$ is the smallest $A({\downarrow}i,{\downarrow}i)$-invariant subspace of $\cX_{{\downarrow}i}$ that contains $\im B({\downarrow}i,i)$. In the sequel, when no confusion can arise we will omit $A$ and $B$ in the notation, and simply write $\cR_i$ for $\cR_i(A,B)$, and apply similar relaxations of the notation for derived subspaces defined below.

\begin{lem}\label{L:SumRi}
	For a poset-causal system $\Si_\cP\sim(A,B,C,D)$, we have
	\[
	\cR(A,B)=\sum_{i=1}^pI_{\underline{n}}(:,{\downarrow} i) \mathcal{R}_i.
	\]
\end{lem}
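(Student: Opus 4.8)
The plan is to read the statement off the trajectory decomposition \eqref{eqSolxyDown} of Lemma~\ref{L:SysTrajRels}. Two preliminary identifications make this quick. First, $\cR(A,B)=\im\cC(A,B)$ is, by the classical Kalman characterization, exactly the set of states of the global system \eqref{eqLTI} that are reachable from the origin, $\cR(A,B)=\{\,x(0,u,t): t\ge 0,\ u\text{ admissible}\,\}$. Second, by the very way it was defined, $\cR_i$ is the corresponding set for the $i$-th downstream system, $\cR_i=\{\,x^{{\downarrow}i}(0,u_i,t): t\ge 0,\ u_i\text{ admissible}\,\}$.

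Next I would specialize \eqref{eqSolxyDown} to $x_0=0$. Then $x_{i,0}=0$ for all $i$, so $\wtil{x}_{0,i}=I_{\underline{n}}({\downarrow}i,i)x_{i,0}=0$, and the decomposition becomes
\[
x(0,u,t)\;=\;\sum_{i\in P} I_{\underline{n}}(:,{\downarrow}i)\,x^{{\downarrow}i}(0,u_i,t),\qquad t\ge 0,
\]
for every input $u=\bigoplus_{i\in P}u_i$. For ``$\subseteq$'', each term on the right lies in $I_{\underline{n}}(:,{\downarrow}i)\cR_i$, hence $x(0,u,t)\in\sum_{i=1}^p I_{\underline{n}}(:,{\downarrow}i)\cR_i$; letting $u$ and $t$ vary gives $\cR(A,B)\subseteq\sum_{i=1}^p I_{\underline{n}}(:,{\downarrow}i)\cR_i$. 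For ``$\supseteq$'', fix $i\in P$ and $\xi\in\cR_i$, say $\xi=x^{{\downarrow}i}(0,u_i,t)$, and apply the displayed identity to the global input $u$ whose $i$-th component is $u_i$ and whose other components vanish: since $x^{{\downarrow}j}(0,0,t)=0$ for $j\ne i$, the right-hand side collapses to $I_{\underline{n}}(:,{\downarrow}i)\xi$, so $I_{\underline{n}}(:,{\downarrow}i)\xi=x(0,u,t)\in\cR(A,B)$. As $\cR(A,B)$ is a subspace, this gives $\sum_{i=1}^p I_{\underline{n}}(:,{\downarrow}i)\cR_i\subseteq\cR(A,B)$.

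I do not expect a genuine obstacle here; the one step worth spelling out is the identification of the algebraically defined subspaces $\cR(A,B)$ and $\cR_i$ with sets of origin-reachable states, which is standard. If one wishes to avoid even that, there is a purely algebraic variant: $\cR(A,B)$ is the smallest $A$-invariant subspace containing $\im B=\sum_{i=1}^p\im B(:,i)$, while $\cR_i$ is the smallest $A({\downarrow}i,{\downarrow}i)$-invariant subspace of $\cX_{{\downarrow}i}$ containing $\im B({\downarrow}i,i)$, and Theorem~\ref{thmdownstreammult} (applied with $G=I_{\underline{n}}$) supplies the intertwining relations $B(:,i)=I_{\underline{n}}(:,{\downarrow}i)B({\downarrow}i,i)$ and $A\,I_{\underline{n}}(:,{\downarrow}i)=I_{\underline{n}}(:,{\downarrow}i)A({\downarrow}i,{\downarrow}i)$. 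One then checks that $\sum_{i=1}^p I_{\underline{n}}(:,{\downarrow}i)\cR_i$ is $A$-invariant and contains $\im B$, giving ``$\subseteq$'', and that for each $i$ the set $\{\xi\in\cX_{{\downarrow}i}: I_{\underline{n}}(:,{\downarrow}i)\xi\in\cR(A,B)\}$ is an $A({\downarrow}i,{\downarrow}i)$-invariant subspace containing $\im B({\downarrow}i,i)$, hence contains $\cR_i$, giving ``$\supseteq$''.
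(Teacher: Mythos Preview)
Your proposal is correct and follows essentially the same route as the paper: both specialize the trajectory decomposition \eqref{eqSolxyDown} of Lemma~\ref{L:SysTrajRels} to $x_0=0$ and read off the two inclusions from it. Your write-up is a bit more explicit about the reverse inclusion (and offers a purely algebraic alternative), but the core argument is the same.
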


\begin{proof}[\bf{Proof}]
	Fix some final time $t>0$. Then $\xi\in \cR(A,B)$ if and only if there exists some input $u=\bigoplus_{i=1}^pu_i$ such that $\xi = x(0,u,t)$. Now $x^{{\downarrow}i}(0,u_i,t) \in \cR_i$ for all $i\in P$ and by \eqref{eqSolxyDown}, we have
	\[
	\xi = x(0,u,t)  = \sum_{i=1}^p I_{\underline{n}}(:,{\downarrow}i)x^{{\downarrow}i}(0,u_i,t).
	\]
	Hence $\xi \in \cR(A,B)$ if and only if $\xi \in \sum_{i=1}^pI_{\underline{n}}(:,{\downarrow} i) \mathcal{R}_i.$
\end{proof}

\subsection{Upstream controllability}

For each $j\in P$ and $i\in{\downarrow}j$ we define the following subspaces of $\cX_i$:
\[
\overline{\cR}_i^j=\overline{\cR}_i^j(A,B) := \cX_i \cap \mathcal{R}_j(A,B) \ands
\widetilde{\mathcal{R}}_i^j=\widetilde{\mathcal{R}}_i^j(A,B) := P_{\cX_i}\mathcal{R}_j(A,B).
\]
Here $P_{\cX_i}$ is the orthogonal projection onto $\cX_i$. One can view $\overline{\mathcal{R}}_i^j$ as the set of local states $x_i\in \cX_i$ that can be reached from a local input $u_j$ in such a way that the other states downstream from $j$ remain unaffected. The subspace $\widetilde{\mathcal{R}}_i^j$, on the other hand, is the set of local states $x_i \in \cX_i$ that can be reached from a local input $u_j$ while the other states downstream from subsystem $j$ may also be affected. From the definitions of the subspaces $\overline{\mathcal{R}}^j_i$ and $\widetilde{\mathcal{R}}_i^j$, we directly get the following inclusions:
\begin{equation}\label{eqoplusinclusions}
\bigoplus_{i\in{\downarrow}j}\overline{\mathcal{R}}^j_i\subseteq \mathcal{R}_j
\subseteq \bigoplus_{i\in{\downarrow}j}\widetilde{\mathcal{R}}_i^j.
\end{equation}
Next we define subspaces, $\overline{\cR}$, $\cR^\circ$ and $\widetilde{\cR}$, of the state space $\cX$ which respect the structure imposed by the poset $\cP$:
\begin{equation}\label{eqR_joverlinetilde}
\begin{aligned}
\overline{\cR}  := \bigoplus_{j\in P} \overline{\cR}_j, \mbox{ where } \overline{\cR}_j:=	\sum_{i\in{\uparrow}j}\overline{\mathcal{R}}_j^i,\quad
\widetilde{\cR}  :=\bigoplus_{j\in P} \widetilde{\cR}_j, \mbox{ where }
\widetilde{\cR}_j := \sum_{i\in{\uparrow}j} \widetilde{\mathcal{R}}^i_j.
\end{aligned}
\end{equation}
Note that the sums in \eqref{eqR_joverlinetilde} are over upstream sets, while the direct sums in \eqref{eqoplusinclusions} were over downstream sets.

\begin{defn}
We call a poset-causal system $\Sigma_{\mathcal{P}}$ \emph{independently controllable} if $\overline{\cR}=\cX$, and \emph{weakly upstream controllable} if $\widetilde{\cR}=\cX$.
\end{defn}

In the context of coordinated linear systems, what we define as independent controllability above goes by the same name in Definition 3.16 in \cite{KRvS12}. Weak upstream controllability does not appear to have been studied for coordinated linear systems yet, however, the subspaces $\widetilde{\cR}_j$, play a role in Lemma 3.15 of \cite{KRvS12}.

The main reason for studying the spaces $\overline{\cR}$ and $\widetilde{\cR}$ instead of $\cR$, is that they are structured as direct sums of subspaces of the local state spaces $\cX_j$. Hence compressions, restrictions and projections of the system matrices to these subspaces exhibit the same poset-causal structure as the original system matrices. Such subspaces will be called {\em structured}.

Via the observation above, the subspace $\widetilde{\cR}_j$ can be interpreted as the states in $\cX_j$ that can be reached from inputs $u_i$ in the subsystems that are upstream from the $j$-th subsystem while states in the other subsystems (that is, states $x_i$ with $i\neq j$) are allowed to be affected. For $\overline{\cR}_j$ only states in $\cX_j$ are included in case they can be reached from an input $u_i$ of an upstream subsystem (i.e., $i\in{\uparrow}j$) such that no states $x_l$ in local subspaces other than $\cX_j$ are effected.

For theoretical purposes we also introduce the structured subspace $\cR^\circ$ of $\cX$ defined by
\begin{equation}\label{eqRcirc}
\begin{aligned}
\cR^\circ  :=\bigoplus_{j\in P} \cR^\circ_j,\quad \mbox{where}\quad
\cR^\circ_j:=\cX_j \cap \cR.
\end{aligned}
\end{equation}
There does not appear to a clear interpretation of $\cR^\circ$ in terms of the communication structure of the poset-causal system. Its relevance becomes clear from the following theorem, which is the main result of this section.

\begin{thm}\label{T:Contr}
For a poset-causal system $\Sigma_\cP\sim(A,B,C,D)$, we have
\begin{equation}\label{eqContrIncl}
\overline{\cR} \subseteq \cR^\circ \subseteq \cR \subseteq \widetilde{\cR}.
\end{equation}
In particular, if $\Sigma_{\mathcal{P}}$ is independently controllable, then $\Sigma_{\mathcal{P}}$ is controllable and if $\Sigma_{\mathcal{P}}$ is controllable, then $\Sigma_{\mathcal{P}}$ is weakly upstream controllable. Furthermore, if
\[
\cQ=\bigoplus_{j\in P} \cQ_j \quad \text{and} \quad
\cS=\bigoplus_{j\in P} \cS_j  \quad \text{such that} \quad
\cQ\subseteq\cR \subseteq \cS,
\]
where $\cQ_j\subseteq \cX_j$ and $\cS_j\subseteq \cX_j$  for each $j\in P$, then $\cQ\subseteq \cR^\circ$ and $\widetilde{\cR}\subseteq\cS$.
\end{thm}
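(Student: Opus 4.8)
The plan is to prove the chain $\overline{\cR}\subseteq\cR^\circ\subseteq\cR\subseteq\widetilde{\cR}$ by establishing each inclusion separately, and then to derive the ``optimality'' statement about $\cQ$ and $\cS$ as a direct consequence of the outer inclusions. First I would record the basic observation that makes everything tick: by Corollary \ref{C:InvSubs}, for every $j\in P$ the subspace $\cX_{{\downarrow}j}=\BR^{\un{n}_{{\downarrow}j}}$ is $A$-invariant and contains $\im B(:,j)$, hence $\cR_j=\cR(A({\downarrow}j,{\downarrow}j),B({\downarrow}j,j))\subseteq\cX_{{\downarrow}j}$, and moreover (using Theorem \ref{thmdownstreammult} as in the proof of Lemma \ref{L:SysTrajRels}) the compression of $A$ to $\cX_{{\downarrow}j}$ acts on $\cR_j$ just like $A({\downarrow}j,{\downarrow}j)$ does. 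This is what lets me move freely between ``$\cR_j$ as a subspace of $\cX_{{\downarrow}j}$'' and ``$I_{\un n}(:,{\downarrow}j)\cR_j$ as a subspace of $\cX$''.

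For $\overline{\cR}\subseteq\cR^\circ$: fix $j\in P$ and $i\in{\uparrow}j$, so $j\in{\downarrow}i$. An element $\xi\in\overline{\cR}_j^i=\cX_j\cap\cR_i$ lies in $\cR_i\subseteq\cX_{{\downarrow}i}$, hence (after the obvious embedding) $I_{\un n}(:,{\downarrow}i)\xi\in I_{\un n}(:,{\downarrow}i)\cR_i\subseteq\cR(A,B)$ by Lemma \ref{L:SumRi}; but $\xi\in\cX_j$, so $I_{\un n}(:,{\downarrow}i)\xi$ is just $\xi$ regarded in $\cX$, which therefore lies in $\cX_j\cap\cR=\cR^\circ_j$. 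Summing over $i\in{\uparrow}j$ gives $\overline{\cR}_j\subseteq\cR^\circ_j$, and then taking the direct sum over $j$ gives $\overline{\cR}\subseteq\cR^\circ$; here one uses that $\cR^\circ$ is by definition the direct sum of the $\cR^\circ_j$'s, which is legitimate because the $\cX_j$ are in internal direct sum. The inclusion $\cR^\circ\subseteq\cR$ is essentially tautological: $\cR^\circ_j=\cX_j\cap\cR\subseteq\cR$ for each $j$, and $\cR^\circ=\bigoplus_j\cR^\circ_j$ is the span of these, hence contained in $\cR$.

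For $\cR\subseteq\widetilde{\cR}$: by Lemma \ref{L:SumRi}, $\cR=\sum_{i\in P}I_{\un n}(:,{\downarrow}i)\cR_i$, so it suffices to show $I_{\un n}(:,{\downarrow}i)\cR_i\subseteq\widetilde{\cR}$ for each $i$. Take $\eta\in\cR_i\subseteq\cX_{{\downarrow}i}$ and decompose it along the local spaces, $\eta=\bigoplus_{j\in{\downarrow}i}\eta_j$ with $\eta_j\in\cX_j$; then $\eta_j=P_{\cX_j}\eta\in P_{\cX_j}\cR_i=\widetilde{\cR}_j^i$. Since $j\in{\downarrow}i$ means $i\in{\uparrow}j$, we get $\eta_j\in\sum_{k\in{\uparrow}j}\widetilde{\cR}_j^k=\widetilde{\cR}_j$, and hence $\eta=\sum_{j\in{\downarrow}i}\eta_j\in\bigoplus_{j}\widetilde{\cR}_j=\widetilde{\cR}$. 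This proves the chain \eqref{eqContrIncl}, and the two ``in particular'' sentences about controllability are immediate, since $\cR=\cX$ is exactly classical controllability.

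Finally, for the optimality statement, suppose $\cQ=\bigoplus_j\cQ_j$ and $\cS=\bigoplus_j\cS_j$ with $\cQ_j,\cS_j\subseteq\cX_j$ and $\cQ\subseteq\cR\subseteq\cS$. From $\cQ\subseteq\cR$ and $\cQ_j\subseteq\cX_j$ we get $\cQ_j\subseteq\cX_j\cap\cR=\cR^\circ_j$ for each $j$; summing, $\cQ=\bigoplus_j\cQ_j\subseteq\bigoplus_j\cR^\circ_j=\cR^\circ$. For the other: $\cR\subseteq\cS$ gives $\cR_i\subseteq\cX_{{\downarrow}i}\cap\cS$ once we embed $\cR_i$ into $\cX$; applying $P_{\cX_j}$ for $j\in{\downarrow}i$ and using that $P_{\cX_j}\cS\subseteq\cS_j$ (which holds because $\cS$ is the \emph{orthogonal} direct sum of the $\cS_j\subseteq\cX_j$), we obtain $\widetilde{\cR}_j^i=P_{\cX_j}\cR_i\subseteq\cS_j$; summing over $i\in{\uparrow}j$ gives $\widetilde{\cR}_j\subseteq\cS_j$, and the direct sum over $j$ yields $\widetilde{\cR}\subseteq\cS$. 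The one point that needs care throughout — and is the only real obstacle — is the bookkeeping around embeddings and orthogonal projections: one must consistently use that $\cX=\bigoplus_{j}\cX_j$ is an \emph{orthogonal} decomposition so that $P_{\cX_j}$ behaves well with respect to the subspaces $\cX_{{\downarrow}i}$ (namely $P_{\cX_j}\cX_{{\downarrow}i}=\cX_j$ when $j\in{\downarrow}i$ and $=0$ otherwise), and that the various ``$\bigoplus$'' in the definitions of $\overline{\cR},\cR^\circ,\widetilde{\cR}$ are genuine internal orthogonal direct sums; once this is set up cleanly, each inclusion is a one-line computation.
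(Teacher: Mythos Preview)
Your proposal is correct and follows essentially the same approach as the paper: the chain of inclusions is proved via Lemma~\ref{L:SumRi} together with the elementary observations $\cX_j\cap\cR_i\subseteq\cX_j\cap\cR$ and $P_{\cX_j}\cR_i\subseteq\widetilde{\cR}_j$, and the optimality statement then drops out from $\cR^\circ_j=\cX_j\cap\cR$ and (in the paper's organisation, isolated as Lemma~\ref{L:Rtilde}) $\widetilde{\cR}_j=P_{\cX_j}\cR$. The only stylistic difference is that the paper records $\widetilde{\cR}_j=P_{\cX_j}\cR$ as a standalone lemma before the optimality argument, whereas you effectively rederive it inline by summing the inclusions $\widetilde{\cR}_j^i\subseteq\cS_j$ over $i\in{\uparrow}j$.
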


The last claim of the above theorem can be interpreted as saying that among all structured subspaces of the state space $\cX$, the subspace $\cR^\circ$ is the largest included in the controllable subspace $\cR$ and $\widetilde{\cR}$ is the smallest structured subspace that includes $\cR$. In the context of coordinated linear systems (with underlying structure $\cP_1$ in Example \ref{ex5posets}), the inclusions $\overline{\cR} \subseteq \cR \subseteq \widetilde{\cR}$ were obtained in the proof of \cite[Lemma 3.15]{KRvS12}. That $\overline{\cR}$ is not an maximal structured lower subspace of $\cR$ results from the fact that for subspaces $A$, $B$ and $C$ the subspaces $(A\cap B) + (A\cap C)$ and $A\cap (B+C)$ need not coincide. The observation regarding $\cR^\circ$ leads to the observation that $\ov{\cR}$ is the maximal structured lower subspace of $\cR$ if and only if $\ov{\cR}_j=\cX_j \cap \cR$ for all $j\in P$.

The proof of Theorem \ref{T:Contr} is given later in this subsection, after we have proved the following two intermediate lemmas.

\begin{lem}\label{lemRinclusions}
We have the following inclusions:
\begin{align}
\overline{\cR} \subseteq \cR^\circ\subseteq \cR \subseteq \widetilde{\cR}.\label{eqIncl}
\end{align}
\end{lem}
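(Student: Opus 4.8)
The plan is to establish the chain of inclusions $\overline{\cR} \subseteq \cR^\circ \subseteq \cR \subseteq \widetilde{\cR}$ in \eqref{eqIncl} one link at a time, using the structure of $\cR$ provided by Lemma \ref{L:SumRi} together with the elementary inclusions \eqref{eqoplusinclusions}.

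\textbf{Middle inclusion $\cR^\circ \subseteq \cR$.} This is essentially immediate from the definition \eqref{eqRcirc}: each summand $\cR^\circ_j = \cX_j \cap \cR$ is contained in $\cR$, and since $\cX_j \cap \cR = \cR \cap \cX_j$ and these local intersections live in orthogonal coordinate subspaces, their (internal) direct sum $\bigoplus_{j\in P} \cR^\circ_j$ is a subspace of $\cR$. I would just note that $\bigoplus_{j}(\cR\cap\cX_j)\subseteq \cR$ because $\cR$ is a linear subspace containing each $\cR\cap\cX_j$.

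\textbf{Left inclusion $\overline{\cR} \subseteq \cR^\circ$.} Here I would argue summand-wise: fix $j\in P$ and show $\overline{\cR}_j = \sum_{i\in{\uparrow}j}\overline{\cR}^i_j \subseteq \cR^\circ_j = \cX_j\cap\cR$. Since $\cX_j\cap\cR$ is a subspace, it suffices to show each $\overline{\cR}^i_j = \cX_j\cap\cR_i(A,B)$ (for $i\in{\uparrow}j$, equivalently $j\in{\downarrow}i$) is contained in $\cX_j\cap\cR$. Clearly $\overline{\cR}^i_j\subseteq\cX_j$ by definition, so I only need $\overline{\cR}^i_j\subseteq\cR$. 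From Lemma \ref{L:SumRi} we have $I_{\underline n}(:,{\downarrow}i)\cR_i(A,B)\subseteq\cR(A,B)$; since a vector $\xi\in\overline{\cR}^i_j$ already lies in $\cX_j\subseteq\cX_{{\downarrow}i}$ (using $j\in{\downarrow}i$), it equals the image of itself (viewed in $\cX_{{\downarrow}i}$) under the embedding $I_{\underline n}(:,{\downarrow}i)$, and it lies in $\cR_i(A,B)$; hence $\xi\in\cR$. This gives $\overline{\cR}^i_j\subseteq\cX_j\cap\cR=\cR^\circ_j$, and summing over $i\in{\uparrow}j$ and then over $j\in P$ yields $\overline{\cR}\subseteq\cR^\circ$.

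\textbf{Right inclusion $\cR \subseteq \widetilde{\cR}$.} By Lemma \ref{L:SumRi}, $\cR = \sum_{i\in P} I_{\underline n}(:,{\downarrow}i)\cR_i$, so it suffices to show $I_{\underline n}(:,{\downarrow}i)\cR_i \subseteq \widetilde{\cR}$ for each $i\in P$. By the right half of \eqref{eqoplusinclusions}, $\cR_i \subseteq \bigoplus_{j\in{\downarrow}i}\widetilde{\cR}^i_j$, and applying the embedding $I_{\underline n}(:,{\downarrow}i)$ this gives $I_{\underline n}(:,{\downarrow}i)\cR_i \subseteq \bigoplus_{j\in{\downarrow}i}\widetilde{\cR}^i_j \subseteq \bigoplus_{j\in P}\widetilde{\cR}^i_j$, where each $\widetilde{\cR}^i_j$ is viewed as a subspace of $\cX_j$. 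Since for $j\in{\downarrow}i$ we have $i\in{\uparrow}j$, each $\widetilde{\cR}^i_j \subseteq \widetilde{\cR}_j = \sum_{k\in{\uparrow}j}\widetilde{\cR}^k_j$, so $\bigoplus_{j\in P}\widetilde{\cR}^i_j \subseteq \bigoplus_{j\in P}\widetilde{\cR}_j = \widetilde{\cR}$. Summing over $i\in P$ completes the inclusion.

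\textbf{Expected main obstacle.} None of the steps is deep; the one place requiring care is bookkeeping the identification of $\widetilde{\cR}^i_j$ and $\overline{\cR}^i_j$ as subspaces of $\cX_j$ versus as subspaces embedded inside $\cX_{{\downarrow}i}$ (and of $\cX$), i.e., making the applications of the embeddings $I_{\underline n}(:,{\downarrow}i)$ and the index juggling between downstream sums in \eqref{eqoplusinclusions} and upstream sums in \eqref{eqR_joverlinetilde} precise. I would handle this by consistently regarding all the $\overline{\cR}^i_j$, $\widetilde{\cR}^i_j$ and local spaces $\cX_j$ as sitting inside the fixed ambient space $\cX$ via the canonical coordinate embeddings, so that the embeddings $I_{\underline n}(:,{\downarrow}i)$ act as inclusions and the direct sums are the internal direct sums of mutually orthogonal coordinate subspaces.
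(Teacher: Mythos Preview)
Your proposal is correct and follows essentially the same route as the paper: both arguments use Lemma~\ref{L:SumRi} to decompose $\cR$ as $\sum_i I_{\un{n}}(:,{\downarrow}i)\cR_i$, combine this with the elementary inclusions \eqref{eqoplusinclusions}, and reindex via the equivalence $j\in{\downarrow}i \iff i\in{\uparrow}j$ (which the paper formalizes by introducing the sets $\Phi_1=\Phi_2$). Your explicit remark about consistently embedding all local subspaces in the ambient $\cX$ is exactly the bookkeeping the paper leaves implicit.
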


\begin{proof}[\bf{Proof}]
For the inclusion $\overline{\cR}\subseteq \mathcal{R}^\circ$, note that for all $j\in P$ we have by Lemma \ref{L:SumRi} that
\begin{align*}
\ov{\cR}_j
= \sum_{i\in{\uparrow}j}\overline{\mathcal{R}}_j^i
= \sum_{i\in {\uparrow}j} (\cX_j \cap \cR_i)
& \subseteq \cX_j \cap \Bigl(\sum_{i\in {\uparrow}j} I_{\un{n}}(:,{\downarrow}i) \cR_i\Bigr)\\
& \subseteq \cX_j \cap \Bigl(\sum_{i\in P} I_{\un{n}}(:,{\downarrow}i) \cR_i\Bigr)
= \cX_j \cap \cR
=\cR_j^\circ.
\end{align*}
To prove the second inclusion, $\cR^\circ \subseteq \cR$, we see that
\[
\cR^\circ
= \bigoplus_{j\in P}\cR_j^\circ
= \bigoplus_{j\in P}(\cX_j \cap\cR) \subseteq \cR \cap \bigoplus_{j\in P}\cX_j = \cR\cap\cX = \cR.
\]

For the final inclusion $\cR \subseteq \widetilde{\cR}$, define  $\Phi_1=\{(i,j):j\in P, \ i \in {\uparrow}j\}$ and $\Phi_2=\{(i,j):i\in P, \ j\in{\downarrow}i\}$. Note that $\Phi_1=\Phi_2$, because  $i\in{\uparrow}j$ if and only if $j\in{\downarrow}i$. By Lemma \ref{L:SumRi} and the second inclusion in \eqref{eqoplusinclusions}, we obtain that
	\begin{align*}
	\mathcal{R}
	&=\sum_{i\in P} I_{\underline{n}}(:,{\downarrow}i)\mathcal{R}_i
	\subseteq  \sum_{i\in P}  I_{\underline{n}}(:,{\downarrow}i)  \bigoplus_{j\in{\downarrow}i}\widetilde{\cR}^i_j
	= \sum_{i\in P}  I_{\underline{n}}(:,{\downarrow}i)\sum_{j\in{\downarrow}i}I_\un{n}({\downarrow}i,j)\widetilde{\cR}^i_j\\
	&= \sum_{(i,j)\in \Phi_2}  I_{\underline{n}}(:,{\downarrow}i)I_\un{n}({\downarrow}i,j)\widetilde{\cR}^i_j
	= \sum_{(i,j)\in \Phi_2} I_{\underline{n}}(:,j) \widetilde{\cR}^i_j
	= \sum_{(i,j)\in \Phi_1} I_{\underline{n}}(:,j) \widetilde{\cR}^i_j\\
	&= \sum_{j\in P} I_{\underline{n}}(:,j) \sum_{i\in{\uparrow}j} \widetilde{\cR}^i_j
	=\bigoplus_{j\in P}\widetilde{\cR}_j=\wtil{\cR}.
	\end{align*}
	This completes the proof.
\end{proof}

\begin{lem}\label{L:Rtilde}
For each $j\in P$, we have that
\[
\cR_j^\circ=\cX_j \cap\cR \quad \ands \quad \widetilde{\cR}_j = P_{\cX_j}\cR.
\]
\end{lem}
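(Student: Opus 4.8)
The statement has two parts. The first, $\cR_j^\circ = \cX_j \cap \cR$, is in fact just the definition of $\cR_j^\circ$ given in \eqref{eqRcirc}, so there is nothing to prove there beyond pointing to that equation. The real content is the identity $\widetilde{\cR}_j = P_{\cX_j}\cR$, and the plan is to prove this by two inclusions.

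For the inclusion $\widetilde{\cR}_j \subseteq P_{\cX_j}\cR$: by definition $\widetilde{\cR}_j = \sum_{i\in{\uparrow}j}\widetilde{\cR}^i_j = \sum_{i\in{\uparrow}j} P_{\cX_j}\cR_i$. So it suffices to show $P_{\cX_j}\cR_i \subseteq P_{\cX_j}\cR$ whenever $i\in{\uparrow}j$ (equivalently $j\in{\downarrow}i$). Here I would use Lemma \ref{L:SumRi}, which gives $\cR = \sum_{k\in P} I_{\un n}(:,{\downarrow}k)\cR_k \supseteq I_{\un n}(:,{\downarrow}i)\cR_i$. The point is that for $\xi\in\cR_i \subseteq \cX_{{\downarrow}i}$, the vector $I_{\un n}(:,{\downarrow}i)\xi \in \cX$ has its $\cX_j$-component equal to the $\cX_j$-component of $\xi$ (since $j\in{\downarrow}i$, the embedding does not disturb that block), so $P_{\cX_j}\xi = P_{\cX_j} I_{\un n}(:,{\downarrow}i)\xi \in P_{\cX_j}\cR$. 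Care is needed with the identification of $P_{\cX_j}\cR_i$ (a projection of a subspace of $\cX_{{\downarrow}i}$ onto its $\cX_j$-block, which makes sense since $j\in{\downarrow}i$) with $P_{\cX_j}$ applied to the embedded copy; this is the kind of bookkeeping the paper's compression notation is built to handle.

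For the reverse inclusion $P_{\cX_j}\cR \subseteq \widetilde{\cR}_j$: take $\xi\in\cR$ and apply Lemma \ref{L:SumRi} to write $\xi = \sum_{i\in P} I_{\un n}(:,{\downarrow}i)\eta_i$ with $\eta_i\in\cR_i$. Applying $P_{\cX_j}$ and using linearity, $P_{\cX_j}\xi = \sum_{i\in P} P_{\cX_j} I_{\un n}(:,{\downarrow}i)\eta_i$. Now $P_{\cX_j}I_{\un n}(:,{\downarrow}i)\eta_i$ is zero unless $j\in{\downarrow}i$, i.e. unless $i\in{\uparrow}j$, and in that case it equals $P_{\cX_j}\eta_i \in P_{\cX_j}\cR_i = \widetilde{\cR}^i_j$. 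Hence $P_{\cX_j}\xi \in \sum_{i\in{\uparrow}j}\widetilde{\cR}^i_j = \widetilde{\cR}_j$, as desired.

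The main obstacle, such as it is, is purely notational: matching the projection $P_{\cX_j}$ acting on $\cX$ against the block-compression operations $I_{\un n}(:,{\downarrow}i)$ and $I_{\un n}({\downarrow}i,j)$, and making sure the claim "$P_{\cX_j}$ annihilates the $i$-th embedded reachable set unless $j\in{\downarrow}i$" is justified cleanly (it follows because $I_{\un n}({\downarrow}i,:)$ restricted to coordinates outside ${\downarrow}i$ is vacuous, so the $\cX_j$-block of $I_{\un n}(:,{\downarrow}i)\eta_i$ vanishes for $j\notin{\downarrow}i$). There is no analytic difficulty — everything reduces to Lemma \ref{L:SumRi} plus linear-algebraic bookkeeping with the compression notation — so I would keep the write-up short and lean on the conventions already established in Section \ref{S:PosetCausal}.
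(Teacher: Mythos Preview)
Your proposal is correct and uses essentially the same ingredients as the paper: the first identity is indeed just the definition \eqref{eqRcirc}, and for the second you rely on Lemma \ref{L:SumRi} together with the observation that $P_{\cX_j}I_{\un{n}}(:,{\downarrow}i)\cR_i$ vanishes for $i\notin{\uparrow}j$. The only cosmetic difference is that the paper packages the argument as a single chain of equalities (adding in the zero terms for $i\notin{\uparrow}j$ and pulling $P_{\cX_j}$ through the sum by linearity) rather than as two separate inclusions, which makes for a slightly shorter write-up.
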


\begin{proof}[\bf{Proof}]
There is nothing to prove for the first identity. For the identity $\widetilde{\cR}_j = P_{\cX_j}\cR$ we have by definition that $\widetilde{\cR}_j^i = P_{\cX_j}\cR_i$ and we have $\cX_j \perp \cR_i$ if $i\not\in{\uparrow}j$. Thus, by the linearity of the projection  $P_{\cX_j}$, we get that
\begin{align*}
\widetilde{\cR}_j
& = \sum_{i\in{\uparrow}j}\widetilde{\cR}_j^i
= \sum_{i\in{\uparrow}j}P_{\cX_j}\cR_i
= \sum_{i\in{\uparrow}j}P_{\cX_j}\Bigl(I_{\un{n}}(:,{\downarrow}i)\cR_i\Bigr)
= P_{\cX_j}\sum_{i\in{\uparrow}j}I_{\un{n}}(:,{\downarrow}i)\cR_i + \{0\}\\
& = P_{\cX_j}\sum_{i\in{\uparrow}j}I_{\un{n}}(:,{\downarrow}i)\cR_i +
P_{\cX_j}\sum_{i\notin{\uparrow}j}I_{\un{n}}(:,{\downarrow}i)\cR_i
= P_{\cX_j}\sum_{i\in P}I_{\un{n}}(:,{\downarrow}i)\cR_i
= P_{\cX_j}\cR,
\end{align*}
where we have applied Lemma \ref{L:SumRi} in the last step.
\end{proof}

\begin{proof}[\bf Proof of Theorem \ref{T:Contr}]
The inclusion \eqref{eqContrIncl} was proved in Lemma \ref{lemRinclusions}, and the relations between controllability, independent controllability, upstream controllability and weak upstream controllability are a direct consequence.

To see that $\cR^\circ$ and $\widetilde{\cR}$ are optimal, assume that
\[
\cQ=\bigoplus_{j\in P} \cQ_j \ands
\cS=\bigoplus_{j\in P} \cS_j  \quad \text{such that} \quad
\cQ\subseteq\cR \subseteq \cS,
\]
with $\cQ_j\subseteq \cX_j$ and $\cS_j\subseteq \cX_j$ for each $j\in P$. Then, by Lemma \ref{L:Rtilde}, we have
\begin{align*}
\cQ_j & =\cX_j \cap \cQ\subseteq \cX_j \cap \cR =\cR_j^\circ \ands
\widetilde{\cR}_j=P_{\cX_j}\cR\subseteq P_{\cX_j}\cS=\cS_j
\end{align*}
for all $j\in P$. Therefore, by \eqref{eqR_joverlinetilde}, we have $\cQ \subseteq\cR^\circ$ and $\widetilde{\cR}\subseteq \cS$.	
\end{proof}

\subsection{A few examples}

We illustrate the above results with two examples. The first example shows in particular that all inclusions in \eqref{eqIncl} can be strict.

\begin{ex}\label{exLargeEx}
Let $\cP_4$ be the poset given in Example \ref{ex5posets}. Consider the poset-causal system $\Si_{\cP_4}\sim(A,B,0,0)$ with $A\in\cI_{\cP_4}^{\un{n}\times\un{n}}$ and $B\in\cI_{\cP_4}^{\un{n}\times\un{m}}$, with $\un{n}=(2,2,3,4)$ and $\un{m}=(2,1,2,2)$, given by
\begin{align*}
	A \!=\!\! \left[\begin{array}{c c|c c|c c c|c c c c}
	1 & 0 &  &  &  &  &  &  &  & \\
	0 & 0 &  &  &  &  &  &  &  &\\\hline
	1 & 0 & 1 & 0 &  &  &  &  &  &\\
	0 & 0 & 0 & 0 &  &  &  &  &  \\\hline
	 &  &  &  & 0 & 0 & 1 &  &  \\
	 &  &  &  & 0 & 0 & 0 &  &  \\
	 &  &  &  & 0 & 0 & 0 &  &  \\\hline
	0 & 0 & 0 & 0 & 0 & 0 & 0 & 0 & 0 & 0 & 0\\
	1 & 0 & 1 & 0 & 0 & 0 & 0 & 0 & 1 & 0 & 0\\
	0 & 0 & 0 & 0 & 0 & 0 & 1 & 0 & 0 & 0 & 0\\
	0 & 0 & 0 & 0 & 0 &-1 & 0 & 0 & 0 & 0 & 1
	\end{array}\right]
,\quad
	B \!=\!\! \left[
	\begin{array}{c c | c |c c| c c}
	1 & 0 &  &  &  & \\
	0 & 0 &  &  &  & \\\hline
	1 & 0 & 0 &  &  & \\
	0 & 1 & 1 &  &  & \\\hline
	 &  &  & 1 & 0 & \\
	 &  &  & 0 & 1 & \\
	 &  &  & 0 & 0 & \\\hline
	0 & 1 & 0 & 0 & 0 & 0 & 0\\
	0 & 0 & 1 & 0 & 0 & 1 & 0\\
	0 & 0 & 0 & 1 & 0 & 0 & 0\\
	0 & 0 & 0 & 0 & 1 & 0 & 1
	\end{array}\right].
	\end{align*}
(The open white spaces in $A$ and $B$ represent appropriately sized zero matrices.) Then $\cX_1=\text{span}\{e_1,e_2\}$, $\cX_2=\text{span}\{e_3,e_4\}$, $\cX_3=\text{span}\{e_5,e_6,e_7\}$ and $\cX_4=\text{span}\{e_8,e_9,e_{10},e_{11}\}$. So that $\cX = \text{span}\{e_1,\cdots,e_9\}=\BR^9$. Here $e_i$ is the $i$-th standard basis vector in $\BR^9$. Firstly, we note that
\[
{\downarrow}1=\{1,2,4\},\quad {\downarrow}2=\{2,4\},\quad {\downarrow}3=\{3,4\},\quad {\downarrow}4=\{4\}.
\]
Using this, the space of reachable states $\cR=\im\cC(A,B)$ as well as the downstream reachable sets $\cR_i=\im\cC(A({\downarrow}i,{\downarrow}i), B({\downarrow}i,i))$ for $i=1,2,3,4$ can be determined as:
\begin{align*}
	\cR & = \text{span}\{e_1,e_3,e_4,(e_5+e_{10}),e_6,e_8,e_9,e_{11}\} \varsubsetneq \cX\\
	\cR_1 & = \text{span}\{e_1,e_3,(e_4+e_8),e_9\} \varsubsetneq \cX_1\oplus\cX_2\oplus\cX_4\\
	\cR_2 & = \text{span}\{e_4,e_9\} \varsubsetneq \cX_2\oplus\cX_4\\
	\cR_3 & = \text{span}\{(e_5+e_{10}),(e_6+e_{11})\} \varsubsetneq \cX_3\oplus\cX_4\\
	\cR_4 & = \text{span}\{e_9,e_{11}\} \varsubsetneq \cX_4.
\end{align*}
Next we note that the spaces $\overline{\cR}_j^i$ for $i\in P$ and $j\in{\downarrow}i$, are given by:
\begin{align*}
\overline{\cR}_1^1  & = \text{span}\{e_1\}  \varsubsetneq \cX_1, &
\overline{\cR}_2^1  & = \text{span}\{e_3\} \varsubsetneq \cX_2, &
\overline{\cR}_4^1  & = \text{span}\{e_9\} \varsubsetneq \cX_4,\\
\overline{\cR}_2^2  & = \text{span}\{e_4\}  \varsubsetneq \cX_2, &
\overline{\cR}_4^2  & = \text{span}\{e_9\}  \varsubsetneq \cX_4, &
\overline{\cR}_3^3  & = \{0\} \varsubsetneq  \cX_3,\\
\overline{\cR}_4^3  & = \{0\} \varsubsetneq  \cX_4, &
\overline{\cR}_4^4  & = \text{span}\{e_9,e_{11}\} \varsubsetneq \cX_4.
	\end{align*}
From these the spaces $\overline{\cR}_j$ and $\cR_j^\circ$ can be computed using \eqref{eqR_joverlinetilde} and $\wtil{\cR}_j$ can be computed using Lemma \ref{L:Rtilde}:
\begin{align*}
\overline{\cR}_1 & = \text{span}\{e_1\},
& \cR^\circ_1 & = \text{span}\{e_1\},
& \widetilde{\cR}_1 & = \text{span}\{e_1\},\\
\overline{\cR}_2 & = \text{span}\{e_3,e_4\},
& \cR^\circ_2 & = \text{span}\{e_3,e_4\},
& \widetilde{\cR}_2 & = \text{span}\{e_3,e_4\},\\
\overline{\cR}_3 & = \{0\},
& \cR^\circ_3 & = \{e_6\}
& \widetilde{\cR}_3 & = \text{span}\{e_5,e_6\},\\
\overline{\cR}_4 & = \text{span}\{e_9,e_{11}\},
& \cR^\circ_4 & = \text{span}\{e_8,e_{9},e_{11}\},
& \widetilde{\cR}_4 & = \text{span}\{e_8,e_9,e_{10},e_{11}\}.
\end{align*}
Finally, we can calculate $\overline{\cR}$, $\cR^\circ$ and $\widetilde{\cR}$ using \eqref{eqR_joverlinetilde}
\begin{align*}
\overline{\cR}  = \text{span}\{e_1,e_3,e_4,e_9,e_{11}\}, &\quad
\cR^\circ  = \text{span}\{e_1,e_3,e_4,e_6,e_8,e_9,e_{11}\},\\
\wtil{\cR}  = \text{span}\{e_1,e_3&,e_4,e_5,e_6,e_8,e_9,e_{10},e_{11}\}.
\end{align*}
This shows that the following inclusions are all strict:
\[
\{0\} \varsubsetneq \overline{\cR} \varsubsetneq \cR^\circ \varsubsetneq \cR \varsubsetneq \wtil{\cR} \varsubsetneq \cX.
\]
In particular, $\Sigma_{\cP_4}$ is not controllable, neither is it independently  or weakly upstream controllable. Note also that no structured subspaces of $\cX$ can be strictly included in between $\cR^\circ$  and $\cR$ and in between  $\cR$ and $\wtil{\cR}$, confirming the optimality claim of Theorem \ref{T:Contr} for this example.

Finally, note that this example we have
\[
A \ov{\cR}=A\cR^\circ = A \cR=\text{span}\{ e_1,e_3,e_9,e_{11}\}
\text{ and }
A \widetilde{\cR}=\text{span}\{ e_1,e_3,e_9,e_{11}, e_5+e_{10}\}.
\]
Hence $\ov{\cR}$, $\cR^\circ$, $\cR$ and $\wtil{\cR}$ are all invariant subspaces of $A$. For $\cR$ this is true in general, but for the other three this need not always happen, as illustrated in the next example.
\end{ex}

\begin{ex}
Now we consider an example where $\ov{\cR}$, $\cR^\circ$, and $\wtil{\cR}$ are not invariant under $A$. In the context of coordinated linear systems (with poset $\cP_1$ in Example \ref{ex5posets}), for $\ov{\cR}$ and $\wtil{\cR}$ this follows from the controllability decompositions in \cite{KRvS12}. Consider a poset-causal system with poset $\cP_6$ in Example \ref{ex5posets}, where $\un{n}=(1,1,2)$, $\un{m}=(1,1,1)$,
\[
A =\matt{c|c|cc}{1&0&0&0\\ \hline 1&0&0&0\\ \hline 1&0&0&0\\ 0&1&-1&0 }
\ands
B = \matt{c|c|c}{1&0&0\\\hline 0&0&0\\ \hline 0&0&0\\ 0&0&0}.
\]
In this case we have
\[
\overline{\cR}=\cR^\circ=\text{span}\{e_1\},\quad \cR=\text{span}\{e_1, (e_2+e_3) \}\ands \wtil{\cR}=\text{span}\{e_1,e_2,e_3\},
\]
so that
\[
A\ov{\cR}=A \cR^\circ = \text{span}\{e_1+e_2+e_3\}\varsubsetneq \ov{\cR}=\cR^\circ \ands A \wtil{\cR}=\BR^4\varsubsetneq \wtil{\cR}.
\]
\end{ex}

\subsection{Weak local controllability}

We conclude this section with the study of a third controllability notion for poset-causal systems.

\begin{defn}
	We call a poset-causal system $\Sigma_{\mathcal{P}}$ \emph{weakly locally controllable} if
	$$
	\widetilde{\mathcal{R}}_i^i = P_{\cX_i}\mathcal{R}_i=\cX_i \quad \mbox{for each $i\in P$.}
	$$
\end{defn}

Weak local controllability implies that each subsystem of $\Si_\cP$, without external influences, seen as a system in its own right, is a controllable system. For coordinated linear systems it corresponds to Definition 3.10 \cite{KRvS12}.

\begin{lem}\label{L:LocContChar}
A poset-causal system $\Sigma_{\mathcal{P}}$ is weak locally controllable if and only if each local pair $(A_{ii},B_{ii})$ is controllable, that is, if and only if all local subsystems \eqref{eqLocalSys} are controllable.
\end{lem}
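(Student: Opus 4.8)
The plan is to reduce weak local controllability, which by definition asks that $\widetilde{\mathcal{R}}_i^i = P_{\cX_i}\mathcal{R}_i = \cX_i$ for every $i\in P$, to the classical controllability of the local pairs $(A_{ii},B_{ii})$. Concretely, I will show that for each $i\in P$,
\[
\widetilde{\mathcal{R}}_i^i \;=\; P_{\cX_i}\mathcal{R}_i \;=\; \cR(A_{ii},B_{ii}),
\]
the reachable subspace of the $i$-th local system \eqref{eqLocalSys}. Granting this, the equivalence is immediate: $\widetilde{\mathcal{R}}_i^i = \cX_i$ for all $i$ if and only if $\cR(A_{ii},B_{ii}) = \cX_i$ for all $i$, which by the Kalman rank criterion is precisely the statement that every local pair $(A_{ii},B_{ii})$ is controllable, equivalently, that all local subsystems \eqref{eqLocalSys} are controllable.

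To establish the displayed identity, recall that $\mathcal{R}_i = \im\cC(A({\downarrow}i,{\downarrow}i),B({\downarrow}i,i)) \subseteq \cX_{{\downarrow}i}\subseteq\cX$, and that on $\cX_{{\downarrow}i}$ the orthogonal projection $P_{\cX_i}$ is implemented by the block-row selection $I_{\underline{n}}(i,{\downarrow}i)$, since the complementary components of vectors in $\cX_{{\downarrow}i}$ lie in $\cX_{{\dhdown}i}\perp\cX_i$. The two identities $I_{\underline{n}}(i,{\downarrow}i)A({\downarrow}i,{\downarrow}i) = A_{ii}I_{\underline{n}}(i,{\downarrow}i)$ and $I_{\underline{n}}(i,{\downarrow}i)B({\downarrow}i,i) = B_{ii}$ — the first from \eqref{eqDownPullThrough}, the second from Theorem \ref{thmdownstreammult} — iterate to give $I_{\underline{n}}(i,{\downarrow}i)A({\downarrow}i,{\downarrow}i)^k B({\downarrow}i,i) = A_{ii}^k B_{ii}$ for all $k\geq 0$. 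Applying $I_{\underline{n}}(i,{\downarrow}i)$ block-column by block-column to $\cC(A({\downarrow}i,{\downarrow}i),B({\downarrow}i,i))$ therefore yields the block row $[\,B_{ii}\ A_{ii}B_{ii}\ \cdots\ A_{ii}^{\ell}B_{ii}\,]$ for some $\ell\geq n_i-1$, so that $P_{\cX_i}\mathcal{R}_i = \im[\,B_{ii}\ A_{ii}B_{ii}\ \cdots\ A_{ii}^{\ell}B_{ii}\,] = \im\cC(A_{ii},B_{ii}) = \cR(A_{ii},B_{ii})$, the last equality by Cayley--Hamilton. Alternatively, the same identity is read off at once from the trajectory relation $x^{{\downarrow}i}_i(x^{{\downarrow}i}_0,u_i,t) = x^i(x_{i,0},u_i,t)$ in \eqref{eqxiiyii}: with zero initial states, as $u_i$ and $t>0$ vary, the left side sweeps out $P_{\cX_i}\mathcal{R}_i$ and the right side sweeps out the reachable subspace $\cR(A_{ii},B_{ii})$ of \eqref{eqLocalSys}.

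There is no substantive obstacle; the argument is bookkeeping around the pull-through identities already recorded in the excerpt. The only points needing a line of care are the identification of $P_{\cX_i}$ restricted to $\cX_{{\downarrow}i}$ with the block-row map $I_{\underline{n}}(i,{\downarrow}i)$, and the truncation (via Cayley--Hamilton) of the downstream controllability matrix, which a priori carries more than $n_i-1$ powers of $A_{ii}$, down to the local controllability matrix $\cC(A_{ii},B_{ii})$ without changing the image.
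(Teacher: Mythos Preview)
The proposal is correct and follows essentially the same approach as the paper: both use the pull-through identity \eqref{eqDownPullThrough} together with $I_{\underline{n}}(i,{\downarrow}i)B({\downarrow}i,i)=B_{ii}$ to show $I_{\underline{n}}(i,{\downarrow}i)A({\downarrow}i,{\downarrow}i)^k B({\downarrow}i,i)=A_{ii}^kB_{ii}$, conclude that $\widetilde{\cR}_i^i=\cR(A_{ii},B_{ii})$ via a Cayley--Hamilton truncation, and read off the equivalence. Your extra remarks on identifying $P_{\cX_i}|_{\cX_{{\downarrow}i}}$ with $I_{\underline{n}}(i,{\downarrow}i)$ and the alternative via \eqref{eqxiiyii} are fine but not needed beyond what the paper already does.
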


\begin{proof}[\bf Proof]
Using \eqref{eqDownPullThrough} and the fact that $I_{\un{n}}(i,{\downarrow}i)B({\downarrow}i,i)=B_{ii}$ it follows for all integers $k\geq0$ that $I_{\un{n}}(i,{\downarrow}i)A({\downarrow}i,{\downarrow}i)^kB({\downarrow}i,i) = A_{ii}^k B_{ii}$. Hence
\begin{align*}
\widetilde{\cR}_i^i
&=I_{\un{n}}(i,{\downarrow}i) \cR_i
=\im I_{\un{n}}(i,{\downarrow}i)\, \cC(A({\downarrow}i,{\downarrow}i),B({\downarrow}i,i))\\
&=\im \mat{B_{ii} & A_{ii} B_{ii} & \cdots & A_{ii}^{n-1} B_{ii}}
=\im \mat{B_{ii} & A_{ii} B_{ii} & \cdots & A_{ii}^{n_i-1} B_{ii}}\\
&=\im \cC(A_{ii},B_{ii})=\cR(A_{ii},B_{ii}).
\end{align*}
It follows that $\widetilde{\cR}_i^i=\cX_i$ if and only if $(A_{ii},B_{ii})$ is a controllable pair.
\end{proof}

We next show that weak local controllability also implies controllability of $\Si_\cP$.

\begin{thm}\label{thmweaklocalcontrollable}
If a poset-causal system $\Sigma_{\mathcal{P}}$ is weakly locally controllable, then it is controllable.
\end{thm}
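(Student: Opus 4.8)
The plan is to deduce controllability of $(A,B)$ from the controllability of the local pairs $(A_{ii},B_{ii})$, which is available from Lemma~\ref{L:LocContChar}, by means of the Popov--Belevitch--Hautus (PBH) test: a pair $(A,B)$ is controllable if and only if there is no nonzero row vector $w^{*}\in\BC^{1\times n}$ and scalar $\lambda\in\BC$ with $w^{*}A=\lambda w^{*}$ and $w^{*}B=0$.

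Arguing by contradiction, suppose $\Sigma_{\cP}$ is weakly locally controllable but not controllable. The PBH test then supplies $\lambda\in\BC$ and a nonzero vector $v=(v_{1},\ldots,v_{p})^{\intercal}\in\BC^{n}$, partitioned according to $\un{n}$ so that $v_{i}\in\BC^{n_{i}}$, with $v^{*}A=\lambda v^{*}$ and $v^{*}B=0$. Let $N=\{j\in P:v_{j}\neq 0\}$, which is nonempty, and pick $j_{0}$ to be a minimal element of the induced sub-poset $(N,\succeq)$; this is possible since $P$ is finite. By minimality, $v_{i}=0$ for every $i\in{\dhdown}j_{0}$. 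Now read off the $j_{0}$-th block component of the identities $v^{*}A=\lambda v^{*}$ and $v^{*}B=0$: since $A_{ij_{0}}=0$ whenever $j_{0}\not\succeq i$, only indices $i\in{\downarrow}j_{0}=\{j_{0}\}\cup{\dhdown}j_{0}$ contribute, and among those $v_{i}\neq 0$ only for $i=j_{0}$; the same applies to $B$. Hence $v_{j_{0}}^{*}A_{j_{0}j_{0}}=\lambda v_{j_{0}}^{*}$ and $v_{j_{0}}^{*}B_{j_{0}j_{0}}=0$ with $v_{j_{0}}\neq 0$, which by the PBH test contradicts controllability of $(A_{j_{0}j_{0}},B_{j_{0}j_{0}})$ --- a controllability that holds by Lemma~\ref{L:LocContChar} under our hypothesis. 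This contradiction proves $\Sigma_{\cP}$ is controllable.

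The only delicate step is the choice of $j_{0}$: it must be chosen minimal in the support $N$ (equivalently, after relabelling the nodes so that $A,B,C,D$ are block lower triangular, $j_{0}$ should be the largest index with $v_{j_{0}}\neq 0$), for only then do the off-diagonal contributions to the $j_{0}$-th block of $v^{*}A$ and $v^{*}B$ vanish; a maximal element of $N$ would not do. A self-contained alternative, closer to the subspace bookkeeping of this section, is an induction along the poset from its minimal elements upwards: using Lemma~\ref{L:SumRi} together with $P_{\cX_{j}}\cR_{j}=\cX_{j}$, one shows $\cX_{j}\subseteq\cR$ for every $j\in P$ by writing an arbitrary $x_{j}\in\cX_{j}$ as $\xi-\eta$ with $\xi\in I_{\un{n}}(:,{\downarrow}j)\cR_{j}\subseteq\cR$ and $\eta\in\bigoplus_{k\in{\dhdown}j}\cX_{k}\subseteq\cR$ by the inductive hypothesis; then $\cX=\bigoplus_{j\in P}\cX_{j}\subseteq\cR$.
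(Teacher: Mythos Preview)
Your PBH argument is correct. The choice of $j_{0}$ as a minimal element of the support $N$ is exactly what makes the block computation go through: only indices $i\in{\downarrow}j_{0}$ can contribute to the $j_{0}$-th block column of $v^{*}A$ and $v^{*}B$, and minimality kills all of them except $i=j_{0}$.

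The paper takes a rather different route. Instead of the eigenvector test, it gives a constructive argument at the level of trajectories: starting from the maximal elements of $\cP$ (those $j$ with ${\dhup}j=\emptyset$) and moving downward through the level sets $L_{k}=\{j:|{\uparrow}j|\leq k\}$, it uses the decomposition of Lemma~\ref{L:SysTrajRels} (equation~\eqref{eqxy_ixy^i}) to write $x_{i}(0,u,t)=x^{i}(0,u_{i},t)+\sum_{j\in{\dhup}i}x_{i}^{{\downarrow}j}(0,u_{j},t)$. Once the upstream inputs $u_{j}$ have been fixed by earlier stages, local controllability of $(A_{ii},B_{ii})$ lets one choose $u_{i}$ to hit any prescribed $\xi_{i}\in\cX_{i}$. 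Your brief ``self-contained alternative'' is essentially the subspace-level dual of this construction (induction from the minimal elements via $\cR_{j}$ and Lemma~\ref{L:SumRi} rather than from the maximal elements via explicit inputs).

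What each approach buys: the paper's proof is constructive---it actually exhibits the steering input---and stays entirely within the real trajectory framework developed in Section~\ref{S:PosetCausal}, at the cost of some bookkeeping with the level sets $L_{k}$ and repeated appeals to Lemma~\ref{L:SysTrajRels}. Your PBH argument is shorter and purely linear-algebraic, avoiding any integral formulas, but it is nonconstructive and requires passing to complex eigenvectors. Both are valid; the PBH route is arguably the cleaner proof of the bare implication, while the paper's argument fits more naturally with the trajectory-based notions used elsewhere in the section.
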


\begin{proof}[\bf Proof]
Assume that $\Sigma_\cP$ is weakly locally controllable. We show that $\cX=\cR$. Fix a $t>0$. Let $\xi=\bigoplus_{j\in P}\xi_j\in\cX$ with $\xi_j\in \cX_j$. We seek an input $u=\bigoplus_{j\in P}u_j$ with $u_j$ taking values in $\cU_j$ so that $\xi=x(0,u,t)$. For $k=1,2,\ldots,p$, set
\begin{equation}\label{eqLk}
L_k:=\{j\in P: |{\uparrow}j|\leq k \}
\end{equation}
and note that $P = L_p= \bigcup_{k=1}^p L_k$ and $L_k\subseteq L_l$ if $k\leq\ell$. We prove by induction that for $k=1,2,\ldots,p$ there exist an input $u$ so that $\xi_{j}=x_j(0,u,t)$ for all $j\in L_k$.

For $k=1$, if $i\in L_1$, then ${\dhup}i=\emptyset$. Thus by \eqref{eqxy_ixy^i}, for any input $u=\bigoplus_{j\in P}u_j$ we have $x_i(0,u,t)=x^i(0,u_i,t)$ with $x_i$ the state of the $i$-th subsystem \eqref{eqSubsys} and $x^i$ the state of the $i$-th local system \eqref{eqLocalSys}. Hence $x_i$ depends only on $u_i$. Since $\Sigma_\cP$ is weakly locally controllable, for $i\in L_1$ there exist inputs $u_i$ so that $x_i(0,u,t)=x^i(0,u_i,t)=\xi_i$. Set $u_j=0$ for $j\not\in L_1$. Then $u$ is an input with the required property.

Now let $k\geq 1$ and assume we have an input $\widetilde{u}=\bigoplus_{j\in P}\widetilde{u}_j$ so that $x_j(0,\wtil{u},t)=\xi_j$ for all $j\in L_{k}$. If $k=p$ then we are done. Otherwise, set $u_j=\wtil{u}_j$ for $j\not\in R_k:=\{j\in L_{k+1} \colon j\not\in L_k\}$ and $R_p:=\emptyset$. For $i\in L_k$ we have ${\uparrow}i\subseteq L_k$ so that $\xi_i=x_i(0,\wtil{u},t)=x_i(0,u,t)$, irrespectively of the choice of the inputs $u_j$ for $j\in R_k$. It remains to select $u_i$ for $i\in R_{k}$ so that also $\xi_i=x_i(0,u,t)$. Let $i\in R_k$. In that case ${\dhup} i\subseteq L_k$. Hence, for all $j\in {\dhup} i$, the input $u_j$ is fixed. By \eqref{eqxy_ixy^i} in Lemma \ref{L:SysTrajRels}, we have that for any input $u_i$
\begin{align*}
x_i(0,u,t) = x^i(0,u_i,t) + \sum_{j\in{\dhup}i}x^{{\downarrow}j}_i(0,u_j,t),
\end{align*}
independent of the choice of the inputs $u_j$ for $j\in R_k$, $j\neq i$. By assumption, the local system \eqref{eqLocalSys} is controllable. Hence there exists an input $u_i$ so that
\[
x^i(0,u_i,t)=\xi_i - \sum_{j\in{\dhup}i}x^{{\downarrow}j}_i(0,u_j,t),
\]
noting that the right hand side is fixed by our selection of inputs $u_j$ for $j\in L_k$. As observed above, we can select $u_i$ independently of the choice of the inputs $u_j$ for $j\in L_k$ with $j\neq i$. This gives us a way to select the remaining inputs $u_i$ for $i\in R_k$ so that $x_j(0,u,t)=\xi_j$ for all $j\in L_{k+1}$. By proceeding inductively we obtain an input $u$ so that $x_j(0,u,t)=\xi_j$ for all $j\in L_p=P$, which proves our claim.
\end{proof}

For weak local controllability, we only show that it implies controllability, but no inclusion of subspaces. Define $\widehat{\cR}:=\oplus \widetilde{\cR}_i^i$. By Theorem \ref{thmweaklocalcontrollable}, if $\widehat{\cR}=\cX$, then $\cR=\cX$. In view of Theorem \ref{T:Contr}, a natural question is whether $\widehat{\cR}\subseteq\cR$ holds also if  $\cR\neq\cX$. This turns out not to be the case, as shown in the next example.

\begin{ex}
Let $\cP=(P,\preceq)$ with $P=\{1,2\}$ and $1\preceq 2$. Take $\un{n}=(1,1)$ and $\un{m}=(1,1)$ and let $\Si_\cP\sim(A,B,0,0)$ be the leader-follower system with
\begin{minipage}{0.3\textwidth}
\begin{figure}[H]
\centering
\begin{tikzpicture}[line cap=round,line join=round,>=triangle 45,x=1.0cm,y=1.0cm]
\clip(6,2.5) rectangle (8,4.5);
\draw [->,line width=0.6pt] (7,4.) -- (7.,3.);
\begin{scriptsize}

\draw [fill=black] (7,4.) circle (2.5pt);
\draw[color=black] (7,4.3) node {$1$};
\draw [fill=black] (7.,3.) circle (2.5pt);
\draw[color=black] (7,2.7) node {$2$};
\draw[color=black] (7.5,4) node {$\cG_{\cP}^{\downarrow}$};
\end{scriptsize}
\end{tikzpicture}
\hspace*{-2cm}
\end{figure}
\end{minipage}
\begin{minipage}{0.7\textwidth}
\[
A = \left[\begin{array}{c |c}
0 & 0   \\\hline
0 & 0
\end{array}\right]
\ands
B = \left[
\begin{array}{c|c}
1 & 0 \\ \hline
1 & 0
\end{array}\right].
\]
\end{minipage}
Then $\cR = \text{span}\{e_1+e_2\}$, $\cR_1 = \text{span}\{e_1+e_2\}$ and $\cR_2 = \{0\}$. Hence $\wtil{\cR}^1_1 = P_{\cX_1}\cR_1 = \text{span}\{e_1\}$ and $\wtil{\cR}^2_2 = \{0\}$ and so $\widehat{\cR}=\text{span}\{e_1\}$. This shows that $\widehat{\cR}\not\subseteq\cR$.
\end{ex}

It was pointed out in \cite{KRvS12} that, for coordinated linear systems, weak local controllability is necessary and sufficient for pole placement. We now show this is also the case for poset-causal systems. We shall first prove the following lemma. Here and in the sequel, $p_X$ denotes the characteristic polynomial of a square matrix $X$. The following Lemma shows that the characteristic polynomial of a matrix in
$\cI_\cP^{\un{n}\times \un{n}}$ is the product of the characteristic polynomials of its main diagonal blocks.

\begin{lem}\label{L:SpecDec}
If $A=[A_{ij}]\in\cI_\cP^{\un{n}\times \un{n}}$, then
\[
p_A(\la)=\prod_{i\in\cP} p_{A_{ii}}(\lambda),\quad \mbox{so that}\quad
\si(A)=\bigcup_{i\in\cP} \si(A_{ii}).
\]
\end{lem}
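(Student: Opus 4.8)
The plan is to reduce $A$ to block lower triangular form by relabelling the elements of $P$, and then invoke the classical fact that the characteristic polynomial of a block triangular matrix is the product of the characteristic polynomials of its diagonal blocks.

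First I would use the observation made right after Example~\ref{exMatrixZerostructures}: since $\cP$ is a finite poset, its elements can be renumbered (equivalently, one applies a permutation matrix $\Pi\in\BR^{\un{n}\times\un{n}}$ that only permutes the block rows and columns) so that $\wtil{A}:=\Pi A\Pi^\intercal$ is block lower triangular, say $\wtil{A}=[\wtil{A}_{ij}]$ with $\wtil{A}_{ij}=0$ whenever $i<j$. The key point is that conjugation by such a block permutation carries each diagonal block $A_{ii}$ to a diagonal block of $\wtil{A}$; hence the multiset of diagonal blocks of $\wtil{A}$ is exactly $\{A_{ii}:i\in P\}$ (in permuted order), and no blocks get ``mixed''.

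Next, $\Pi$ is orthogonal, so $\wtil{A}$ and $A$ are similar and therefore $p_A(\la)=p_{\wtil{A}}(\la)$. For a block lower triangular matrix one has the determinant identity $\det(\la I_{\un{n}}-\wtil{A})=\prod_{i\in\cP}\det(\la I_{n_i}-\wtil{A}_{ii})$, which is standard and follows by induction on the number of diagonal blocks (expand along the first block row, or use a Schur-complement argument). Combining these,
\[
p_A(\la)=p_{\wtil{A}}(\la)=\prod_{i\in\cP} p_{\wtil{A}_{ii}}(\la)=\prod_{i\in\cP} p_{A_{ii}}(\la),
\]
the last equality using that the diagonal blocks of $\wtil{A}$ are the $A_{ii}$ up to order. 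Finally, $\si(X)$ is the zero set of $p_X$ for every square matrix $X$, and the zero set of a product of polynomials is the union of the zero sets of the factors; hence $\si(A)=\bigcup_{i\in\cP}\si(A_{ii})$.

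I do not expect a genuine obstacle here: the argument is essentially bookkeeping together with the block triangular determinant formula. The only points requiring a little care are the existence of the relabelling (immediate for finite posets, being a reverse topological sort of $\succeq$) and the fact that conjugation by a block permutation matrix maps diagonal blocks to diagonal blocks. If one prefers to avoid permutations, an equivalent route is induction on $|P|$: choose a minimal element $m\in\cP$, note that $A_{im}\neq 0$ forces $m\succeq i$ and hence $i=m$, so that after moving block $m$ to the front $A$ takes the form $\mat{A_{mm} & * \\ 0 & A'}$ with $A'$ the block incidence matrix associated with the induced subposet $\cP\setminus\{m\}$; then $p_A(\la)=p_{A_{mm}}(\la)\,p_{A'}(\la)$ and the induction hypothesis finishes the proof.
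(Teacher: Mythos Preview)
Your proof is correct and takes essentially the same approach as the paper: exhibit $A$ in block triangular form so that $p_A$ factors as the product of the $p_{A_{ii}}$. The only cosmetic difference is that the paper builds the block triangular structure via the level sets $L_k=\{j\in P:|{\uparrow}j|\le k\}$ (already used in the proof of Theorem~\ref{thmweaklocalcontrollable}), obtaining a recursive decomposition $\widehat{A}_{k+1}=\sbm{\widehat{A}_k & 0 \\ * & \widetilde{A}_k}$ with $\widetilde{A}_k$ block diagonal, whereas you invoke a topological sort (or the minimal-element induction) directly.
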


\begin{proof}[\bf Proof]
For $k=1,\ldots,p$ define $L_k$ as in \eqref{eqLk} and set $R_k:=\{j\in L_{k+1} \colon j\not\in L_k\}=L_{k+1}/L_k$ and $R_p:=\emptyset$ as in the proof of Theorem \ref{thmweaklocalcontrollable} and recall that for $i\in R_k$ we have ${\dhup}i\subseteq L_{k}$. For $k=1,2,\ldots,p$, set
\[
\widehat{A}_k=A(L_k,L_k) \ands
\widetilde{A}_k=A(R_k,R_k).
\]
Since ${\dhup}i\subseteq L_{k}$ for all $i\in R_k$ and $R_k\cap L_{k}=\emptyset$, we have
\[
\widehat{A}_{k+1}=\mat{\widehat{A}_{k}&0\\ * & \widetilde{A}_k} \ands
\widetilde{A}_k=\bigoplus_{i\in R_k} A_{ii},
\]
with $*$ indicating an unspecified matrix.
It now follows recursively that
\[
p_{\widehat{A}_k}(\la)=\prod_{i\in L_k} p_{A_{ii}}(\lambda),\quad \mbox{so that}\quad
\si(\widehat{A}_k)=\bigcup_{i\in L_k}\si(A_{ii}), \quad
k\in\cP.
\]
This proves out claim, since $L_p=\cP$ and $A=\widehat{A}_p$.
\end{proof}

\begin{prop}\label{P:PolePlace}
	A poset-causal system $\Si_\cP\sim(A,B,C,D)$ is weakly locally controllable if and only if for any monic polynomial $p$ of degree $n=n_1+ \cdots + n_p$ there exists a matrix $F\in\cI_\cP^{\un{m}\times \un{n}}$ so that $\det(\lambda I_\un{n}-(A+BF))=p(\lambda)$.
\end{prop}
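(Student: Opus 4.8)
The plan is to exploit the block structure of $\cI_\cP^{\un{n}\times\un{n}}$ to decouple the decentralized pole placement problem into $p$ ordinary pole placement problems, one for each local pair $(A_{ii},B_{ii})$, and then to invoke Lemmas~\ref{L:SpecDec} and~\ref{L:LocContChar}. The starting observation is that for $F\in\cI_\cP^{\un{m}\times\un{n}}$ one has $BF\in\cI_\cP^{\un{n}\times\un{n}}$ by Proposition~\ref{P:MultClosed}, hence $A+BF\in\cI_\cP^{\un{n}\times\un{n}}$, while for each $i\in P$
\[
(BF)_{ii}=\sum_{k\in P}B_{ik}F_{ki}=B_{ii}F_{ii},
\]
because $B_{ik}=0$ unless $k\succeq i$, $F_{ki}=0$ unless $i\succeq k$, and antisymmetry leaves only the term $k=i$. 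Thus $(A+BF)_{ii}=A_{ii}+B_{ii}F_{ii}$, and Lemma~\ref{L:SpecDec} gives
\[
p_{A+BF}(\lambda)=\prod_{i\in P}p_{A_{ii}+B_{ii}F_{ii}}(\lambda).
\]
In particular the characteristic polynomial of $A+BF$ depends only on the diagonal blocks $F_{ii}$, and it is always a product of $p$ monic real polynomials of degrees $n_1,\dots,n_p$.

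For the \emph{sufficiency} direction, assume $\Si_\cP$ is weakly locally controllable, so every $(A_{ii},B_{ii})$ is a controllable pair by Lemma~\ref{L:LocContChar}. Given a monic polynomial $p$ of degree $n$, I would write $p=\prod_{i\in P}p_i$ with each $p_i\in\BR[\lambda]$ monic of degree $n_i$ (distributing the real linear and real irreducible quadratic factors of $p$ so that the $i$-th group has total degree $n_i$), and then apply the classical pole assignment theorem for controllable pairs over $\BR$ to choose $F_{ii}\in\BR^{m_i\times n_i}$ with $p_{A_{ii}+B_{ii}F_{ii}}=p_i$. Assembling the block-diagonal matrix $F$ with these $F_{ii}$ on the diagonal produces an $F\in\cI_\cP^{\un{m}\times\un{n}}$ (the main diagonal is always admissible, by reflexivity of $\succeq$), and the displayed factorization yields $p_{A+BF}=\prod_{i\in P}p_i=p$.

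For the \emph{necessity} direction I would argue by contraposition. If $\Si_\cP$ is not weakly locally controllable, then by Lemma~\ref{L:LocContChar} some pair $(A_{i_0i_0},B_{i_0i_0})$ fails to be controllable, so by the Hautus test there are $\mu\in\BC$ and a nonzero row vector $v$ with $v(\mu I_{n_{i_0}}-A_{i_0i_0})=0$ and $vB_{i_0i_0}=0$; hence $v(\mu I_{n_{i_0}}-(A_{i_0i_0}+B_{i_0i_0}F_{i_0i_0}))=0$ for every $F_{i_0i_0}$, so $\mu$ is a root of $p_{A_{i_0i_0}+B_{i_0i_0}F_{i_0i_0}}$ and therefore, by the displayed factorization, of $p_{A+BF}$ for every $F\in\cI_\cP^{\un{m}\times\un{n}}$. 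Taking a real $\nu\neq\mu$ and $p(\lambda)=(\lambda-\nu)^n$, which is monic of degree $n$ and satisfies $p(\mu)\neq0$, shows that the pole placement property cannot hold.

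The step I expect to need the most care is the factorization $p=\prod_{i\in P}p_i$ in the sufficiency argument: over $\BR$ the irreducible quadratic factors of $p$ cannot be split across blocks, so each block of odd size $n_i$ must absorb at least one real linear factor of $p$, and one has to verify (or incorporate into the hypothesis on $p$) that a distribution of the irreducible factors of $p$ among blocks of sizes $n_1,\dots,n_p$ with the prescribed degrees exists. Everything else is a routine combination of Proposition~\ref{P:MultClosed}, Lemma~\ref{L:SpecDec}, Lemma~\ref{L:LocContChar}, and the classical single-pair pole assignment theorem.
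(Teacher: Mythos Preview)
Your argument is essentially identical to the paper's: both reduce to the factorization
\[
p_{A+BF}(\lambda)=\prod_{i\in P}p_{A_{ii}+B_{ii}F_{ii}}(\lambda)
\]
via Proposition~\ref{P:MultClosed} and Lemma~\ref{L:SpecDec}, invoke classical pole placement blockwise for sufficiency, and use an uncontrollable eigenvalue of some $(A_{i_0i_0},B_{i_0i_0})$ for necessity. Your added justification that $(BF)_{ii}=B_{ii}F_{ii}$ by antisymmetry is a useful detail the paper leaves implicit.

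Your closing concern is well founded and is not merely a matter of care in your write-up: the paper's proof simply asserts ``factor $p(\lambda)=\prod_{i\in\cP}p_i(\lambda)$ with $p_i$ monic and $\deg(p_i)=n_i$'' without comment, so it glosses over exactly the issue you raise. Over $\BR$ such a factorization need not exist when several $n_i$ are odd and $p$ has too few real roots (e.g.\ $p(\lambda)=\lambda^2+1$ with $n_1=n_2=1$), and since the displayed factorization shows that $p_{A+BF}$ is \emph{always} a product of real monic polynomials of degrees $n_1,\dots,n_p$ regardless of $F\in\cI_\cP^{\un m\times\un n}$, no choice of off-diagonal blocks can rescue this. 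So the issue lies with the statement itself rather than with your argument; the equivalence holds verbatim over $\BC$, and over $\BR$ one should restrict to those monic $p$ that admit such a factorization.
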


\begin{proof}[\bf Proof]
Note that the observations about the structure of $A$ with respect to the subspaces associated with $L_k$ and $R_k$ also apply to $B$ and to any matrix $F\in\cI_\cP^{\un{m}\times \un{n}}$. As a consequence, it follows from Proposition \ref{P:MultClosed} and Lemma \ref{L:SpecDec} that
\begin{equation}\label{PPfact}
p_{(A+BF)}(\la)=\prod_{i\in\cP} p_{(A_{ii}+B_{ii}F_{ii})}(\lambda).
\end{equation}
In case $\Si_\cP$ is weakly locally controllable, by the standard pole placement theorem (cf., \cite[Theorem 2.19]{DP00}), for all monic polynomials $p_i$ for $i\in\cP$, with $\deg(p_i)=n_i$ we can find matrices $F_{ii}$ so that $p_{(A_{ii}+B_{ii}F_{ii})}(\lambda)=p_i(\lambda)$. Now factor  $p(\lambda)=\prod_{i\in\cP} p_i(\lambda)$ with $p_i$ monic and $\deg(p_i)=n_i$, and let $F_{ii}$ be as above. Then the block diagonal matrix $F=\diag_{i\in\cP}(F_{ii})$ is in $\cI_\cP^{\un{m}\times \un{n}}$ and our claim follows by \eqref{PPfact}.

Conversely, assume $\Si_\cP$ is not weakly locally controllable. Then by Lemma \ref{L:LocContChar}, there is a $i\in\cP$ such that the pair $(A_{ii},B_{ii})$ is not controllable. This means that $A_{ii}$ has an uncontrollable eigenvalue, say $\la_0$. But then $\la_0$ is an eigenvalue of $A_{ii}+B_{ii}F_{ii}$ for all matrices $F_{ii}\in\BR^{m_i \times n_i}$. Hence by \eqref{PPfact}, $\la_0$ is an eigenvalue of $A+BF$ for all matrices $F\in\cI_\cP^{\un{m}\times \un{n}}$. Thus, any monic polynomial $p$ with degree $n$ which does not have $\lambda_0$ as a root cannot appear as the characteristic polynomial of $A+BF$.
\end{proof}

Proposition \ref{P:PolePlace} shows that weak local controllability corresponds to pole placement via a structured feedback matrix $F$. In case a poset-causal system is controllable but not weakly locally controllable, it follows that pole placement is still possible, but not always via a structured feedback matrix. We illustrate this in the following example, where we, in fact, show that state feedback stabilizability of the global system (in the classical sense) need not imply that state feedback stabilizability can be achieved by a structured feedback matrix.

\begin{ex}
Consider a poset-causal system $\Si_{\cP_6}\sim(A,B,0,0)$ with $\cP_6$ as in Example \ref{ex5posets}, $\un{n}=(2,2,1)$ and $\un{m}=(2,1,1)$ and with $A\in\cI_{\cP_6}^{\un{n}\times\un{n}}$ and $B\in\cI_{\cP_6}^{\un{n}\times\un{m}}$ given by:
\begin{align*}
	A = \left[\begin{array}{c c|c c|c}
	1 & 0 &  &  & \\
	0 & 0 &  &  & \\\hline
	1 & 0 & 1 & 0 &  \\
	0 & 0 & 0 & 0 & \\\hline
	1 & 0 &-1 & 0 & 1   \\
	\end{array}\right]
	\ands
	B = \left[
	\begin{array}{c c | c |c}
	1 & 0 &  & \\
	0 & 1 &  & \\\hline
	1 & 0 & 1 & \\
	0 & 1 & 0 &\\\hline
	0 & 1 & 1 & 1
	\end{array}\right].
\end{align*}
We have $\cX_1=\text{span}\{e_1,e_2\}$, $\cX_2=\text{span}\{e_3,e_4\}$ and $\cX_3=\text{span}\{e_5\}$. So that $\cX = \text{span}\{e_1,e_2,e_3,e_4,e_5\}=\BR^5$. We note that ${\downarrow}1=\{1,2,3\}$, ${\downarrow}2=\{1,2\}$ and ${\downarrow}3=\{3\}$. Using this, we determine the reachable set $\cR=\im\cC(A,B)$ as well as the downstream reachable sets $\cR_i=\cC(A({\downarrow}i,{\downarrow}i), B({\downarrow}i,i))$ for $i=1,2,3$:
\begin{align*}
\cR = \text{span}\{e_1,e_2&,e_3,e_4,e_5\} = \cX,\\
\cR_1  = \text{span}\{(e_1 + e_3),e_2&,(e_4+e_5)\} \varsubsetneq \cX_1\oplus\cX_2\oplus\cX_3,\\
\cR_2 = \text{span}\{(e_3+e_5)\} \varsubsetneq \cX_2&\oplus\cX_3,\quad
\cR_3  = \text{span}\{e_5\} = \cX_3.
\end{align*}
Next we compute the spaces $\overline{\cR}_i^i=X_i\cap \cR_i$:
\begin{align*}
 \widetilde{\cR}_1^1 = \text{span}\{e_1,e_2\} = \cX_1, \quad
 \widetilde{\cR}_2^2 = \text{span}\{e_3\} \varsubsetneq \cX_2, \quad
 \widetilde{\cR}_3^3 =  \text{span}\{e_5\} = \cX_3.
\end{align*}
Since $\cR=\cX$, the system $\Sigma_{\cP_6}$ is controllable, and hence $A$ can be stabilized via state feedback:\ There exists a matrix $F\in\BR^{4 \times 5}$ so that $A+BF$ has eigenvalues only in the open left hand plane $\BC_-:=\{z\in\BC \colon \re(z)<0\}$. However, $\Sigma_{\cP_6}$ is not weakly locally controllable, because $\widetilde{\cR}_2^2 = \text{span}\{e_3\} \neq \cX_2$. Hence there should not exist a matrix $F\in \cI_\cP^{\un{m}\times\un{n}}$ so that $A-BF$ has eigenvalues only in $\BC_-$. Indeed, for $F=[f_{ij}]\in \cI_\cP^{\un{m}\times\un{n}}$ we have
\[
A+BF = \left[\begin{array}{c c|c c|c}
(1+f_{11}) & f_{12} &  &  & \\
f_{21} & f_{22} &  &  & \\\hline
* & * & (1+f_{33}) & f_{34} &  \\
* & * & 0 & 0 & \\\hline
* & * & * & * & (1+f_{45})   \\
\end{array}\right],
\]
and it follows that $0$ will necessarily be an eigenvalue of $A+BF$.
\end{ex}

\section{Upstream indistinguishable states and downstream observability}\label{S:DownObserve}

In this section we define notions of distinguishability and observability for poset-causal systems that are dual to the notions of reachability and controllability considered in the previous section. We give the definitions and main results, but without proofs. The results follow directly from duality relations determined in the next section.

For a poset-causal system $\Si_\cP\sim(A,B,C,D)$ and a $i\in P$, in correspondence with \eqref{eqpartsub}, define
$$
\cX_{{\uparrow}i} := \bigoplus_{j\in{\uparrow}i}\cX_j\quad \ands \quad
\cX_{P\backslash{\uparrow}i} := \bigoplus_{j\notin{\uparrow}i}\cX_j.
$$
The $i$-upstream indistinguishable set $\cN_i(C,A)$ consists of the initial states $x^{{\uparrow}i}_0\in X_{{\uparrow}i}$ that cannot be distinguished from 0 using the output of subsystem $i$ only. It follows that $\cN_i(C,A)$ is contained in $\cX_{{\uparrow}i}$ and consists of the states $\xi\in \cX_{{\uparrow}i}$ that are indistinguishable from 0 in the system
\begin{align*}
\dot{x}^{{\uparrow}i}(t) & = A({\uparrow}i,{\uparrow}i)x^{{\uparrow}i}(t), \qquad x^{{\uparrow}i}(0) = \xi \\
y^{{\uparrow}i}(t) & = C(i,{\uparrow}i)x^{{\uparrow}i}(t),
\end{align*}
that is, the $i$-th upstream system \eqref{eqUpSys} with zero inputs.
In this case we say that $\xi$ is \emph{$i$-upstream indistinguishable}. It follows that
\[
\cN_i(C,A) = \cN(C(i,{\uparrow}i)A({\uparrow}i,{\uparrow}i)) = \ker\cO(C(i,{\uparrow}i)A({\uparrow}i,{\uparrow}i)).
\]
Also here we usually write $\cN_i$ rather than $\cN_i(C,A)$ if this does not cause confusion.

The following result is the analogue of lemma \ref{L:SumRi} for upstream indistinguishable sets. In the context of the coordinated linear systems this results corresponds to Lemma 4.2 in \cite{KRvS12}.

\begin{lem}\label{L:capNi}
	For a poset-causal system $\Sigma_{\mathcal{P}}\sim(A,B,C,D)$ we have
	\begin{align*}
	\mathcal{N}	& = \bigcap_{i\in P} \left(\mathcal{N}_i \oplus \cX_{P\backslash {\uparrow}i} \right).
	\end{align*}
\end{lem}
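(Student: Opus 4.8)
The plan is to derive the identity by duality from Lemma \ref{L:SumRi}, exactly as announced in the text. The starting point is the classical relation between the unobservable and reachable subspaces, $\cN(C,A)=\cR(A^\intercal,C^\intercal)^\perp$, which follows at once from $\cO(C,A)=\cC(A^\intercal,C^\intercal)^\intercal$ and hence $\kr\cO(C,A)=(\im\cC(A^\intercal,C^\intercal))^\perp$. Since the dual system $\Sigma_{\cP_d}\sim(A^\intercal,C^\intercal,B^\intercal,D^\intercal)$ is again a poset-causal system, with underlying poset $\cP_d$ and with the same state-space partition $\un{n}$, Lemma \ref{L:SumRi} applies to it and gives
\[
\cR(A^\intercal,C^\intercal)=\sum_{i=1}^p I_{\un{n}}(:,{\downarrow}_d i)\,\mathcal{R}_i(A^\intercal,C^\intercal),\qquad
\mathcal{R}_i(A^\intercal,C^\intercal)=\cR\bigl((A^\intercal)({\downarrow}_d i,{\downarrow}_d i),(C^\intercal)({\downarrow}_d i,i)\bigr).
\]

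Next I would carry out the bookkeeping that turns the right-hand side into the $\mathcal{N}_i$'s. Recall that ${\downarrow}_d i={\uparrow}i$, and that block compression of a transpose satisfies $(A^\intercal)(R,S)=(A(S,R))^\intercal$; in particular $(A^\intercal)({\uparrow}i,{\uparrow}i)=A({\uparrow}i,{\uparrow}i)^\intercal$ and $(C^\intercal)({\uparrow}i,i)=C(i,{\uparrow}i)^\intercal$. Therefore
\[
\mathcal{R}_i(A^\intercal,C^\intercal)=\im\cC\bigl(A({\uparrow}i,{\uparrow}i)^\intercal,C(i,{\uparrow}i)^\intercal\bigr)
=\bigl(\kr\cO(C(i,{\uparrow}i),A({\uparrow}i,{\uparrow}i))\bigr)^\perp=\mathcal{N}_i^{\perp},
\]
the orthogonal complement here being taken inside the ambient space $\cX_{{\uparrow}i}$ of the $i$-th upstream system. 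Substituting into the display above and taking orthogonal complements in $\cX$ (which converts the sum into an intersection) yields
\[
\mathcal{N}=\cN(C,A)=\cR(A^\intercal,C^\intercal)^\perp=\Bigl(\sum_{i=1}^p I_{\un{n}}(:,{\uparrow}i)\,\mathcal{N}_i^{\perp}\Bigr)^\perp=\bigcap_{i=1}^p\bigl(I_{\un{n}}(:,{\uparrow}i)\,\mathcal{N}_i^{\perp}\bigr)^\perp.
\]

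Finally I would compute each complement on the right. Since $\cX=\cX_{{\uparrow}i}\oplus\cX_{P\backslash{\uparrow}i}$ is an orthogonal decomposition and $I_{\un{n}}(:,{\uparrow}i)\mathcal{N}_i^{\perp}\subseteq\cX_{{\uparrow}i}$, the orthogonal complement in $\cX$ of this subspace is $\cX_{P\backslash{\uparrow}i}$ together with the complement of $\mathcal{N}_i^{\perp}$ inside $\cX_{{\uparrow}i}$, namely $\mathcal{N}_i$; that is, $\bigl(I_{\un{n}}(:,{\uparrow}i)\mathcal{N}_i^{\perp}\bigr)^\perp=\mathcal{N}_i\oplus\cX_{P\backslash{\uparrow}i}$, an orthogonal direct sum since $\mathcal{N}_i\subseteq\cX_{{\uparrow}i}\perp\cX_{P\backslash{\uparrow}i}$. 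Combining the last two displays gives $\mathcal{N}=\bigcap_{i\in P}(\mathcal{N}_i\oplus\cX_{P\backslash{\uparrow}i})$. The only delicate point is keeping careful track of which space each orthogonal complement is taken in, together with the transpose--compression identity; once those are pinned down the proof is a routine dualization of Lemma \ref{L:SumRi}. (Alternatively, one can argue directly: by the upstream-system identity $y_i(x_0,0,\cdot)=y^{{\uparrow}i}(I_{\un{n}}({\uparrow}i,:)x_0,0,\cdot)$ established in Section \ref{SS:PosetCausal}, a state $\xi$ lies in $\mathcal{N}$ if and only if $I_{\un{n}}({\uparrow}i,:)\xi\in\mathcal{N}_i$ for every $i\in P$, which is precisely the condition $\xi\in\bigcap_{i\in P}(\mathcal{N}_i\oplus\cX_{P\backslash{\uparrow}i})$.)
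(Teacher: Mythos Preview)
Your proof is correct and follows precisely the approach the paper intends: the paper states Lemma \ref{L:capNi} without proof, announcing that the results of Section \ref{S:DownObserve} ``follow directly from duality relations determined in the next section,'' and your argument carries out exactly this dualization of Lemma \ref{L:SumRi} (with the identification $(\cR_i)^d=\cX_{{\uparrow}i}\ominus\cN_i$ appearing in the paper as Lemma \ref{lemRiperp=Ndi}). The alternative direct argument you sketch at the end is also valid and slightly more elementary, bypassing the duality machinery.
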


Recall that $\mathcal{N}_i \subseteq \cX_{{\uparrow}i}$ and that $\cX_j\subseteq \cX_{{\uparrow}i}$ if $j\in {\uparrow}i$. For each $j\in{\uparrow}i$, we define
$$
\overline{\mathcal{N}}_i^j=\overline{\mathcal{N}}_i^j(C,A) := \mathcal{N}_i(C,A) \cap \cX_j
\ands \widetilde{\mathcal{N}}_i^j=\widetilde{\mathcal{N}}_i^j(C,A) := P_{\cX_j}\cN_i(C,A).
$$
From these definitions, we immediately get the following inclusions:
\begin{equation}\label{eqNinclusions}
\bigoplus_{j\in{\uparrow}i}\overline{\mathcal{N}}_i^j
\subseteq \mathcal{N}_i
\subseteq \bigoplus_{j\in{\uparrow}i} \widetilde{\mathcal{N}}_i^j,
\end{equation}
In analogy with \eqref{eqR_joverlinetilde} and \eqref{eqRcirc} we define the following structured subspaces of $\cX$:
\begin{equation}\label{eqN_joverlinetilde}
\begin{aligned}
\overline{\cN}:=\bigoplus_{j\in P} \overline{\cN}^j,\quad
\cN^\circ:=\bigoplus_{j\in P} \cN^{\circ j},\quad
\widetilde{\cN}:=\bigoplus_{j\in P} \widetilde{\cN}^j, \quad \mbox{where}\\
\overline{\cN}^j: = \bigcap_{i\in{\downarrow}j}\overline{\mathcal{N}}_i^j,\quad
\cN^{\circ j} := P_{\cX_j} \cN,\quad
\widetilde{\cN}^j: = \bigcap_{i\in {\downarrow}j} \widetilde{\mathcal{N}}^j_i.
\end{aligned}
\end{equation}

\begin{defn}
We call a poset-causal system $\Sigma_{\mathcal{P}}$ \emph{independently observable} if $\widetilde{\cN}=\{0\}$, and  \emph{weakly downstream observable} if $\overline{\cN}=\{0\}$.
\end{defn}

In the context of coordinated linear systems, what we define as independent observability, goes by the same name in Definition 4.17 in \cite{KRvS12}. Downstream observability and weak downstream observability does not appear to have been studied for coordinated linear systems yet, but the subspaces $\wtil{\cN}^j$ play an important role in Lemma 4.16 in \cite{KRvS12}.

The space $\overline{\cN}^j$ may be interpreted as the states in $\cX_j$ that are indistinguishable from each other when observing outputs that are downstream from subsystem $j$ (that is, outputs $y_i$ with $i\in{\downarrow}j)$, while not being indistinguishable from states in other subsystems (that is $x_i$ with $i\neq j$). The space $\wtil{\cN}^j$ consist of states in $\cX_j$ that are indistinguishable from each other when observing outputs that are downstream from subsystem $j$ (that is, outputs $y_i$ with $i\in{\downarrow}j)$, while in this case these states are also allowed to be indistinguishable from other states $x_i$ with $i\neq j$.  There does not seem to be a clear interpretation of the states in the space $\cN^{\circ j}$ in terms of the communication structure of the poset-causal system. Its importance is due to the fact that it turns out to be the optimal structured subspace containing $\cN$, as is shown in the following theorem - the main result of this section.

\begin{thm}\label{T:Obs}
For a poset-causal system $\Sigma_\cP\sim(A,B,C,D)$, we have
\begin{equation}\label{eqObsIncl}
\overline{\cN} \subseteq \cN \subseteq  \cN^\circ \subseteq \widetilde{\cN}
\quad\mbox{so that}\quad
\widetilde{\cN}^\perp \subseteq \cN^{\circ\perp}  \subseteq  \cN^\perp \subseteq \overline{\cN}^\perp
\end{equation}
and
\begin{equation}\label{eqOverlineNj}
\overline{\cN}^j = \cX_j\cap\cN
\quad\mbox{so that}\quad
P_{\cX_j}\cN^\perp = \cX_j\ominus\overline{\cN}^j.
\end{equation}
In particular, if $\Sigma_{\mathcal{P}}$ is independently observable, then $\Sigma_{\mathcal{P}}$ is observable and if $\Sigma_{\mathcal{P}}$ is observable, then $\Sigma_{\mathcal{P}}$ is weakly downstream observable. Furthermore, if
\[
\cQ=\bigoplus_{j\in P} \cQ_j, \quad \text{and} \quad
\cS=\bigoplus_{j\in P} \cS_j  \quad \text{such that} \quad
\cQ\subseteq\cN \subseteq \cS,
\]
where $\cQ_j\subseteq \cX_j$ and $\cS_j\subseteq \cX_j$  for each $j\in P$, then $\cQ\subseteq \overline{\cN}$ and $\cN^\circ\subseteq\cS$.
\end{thm}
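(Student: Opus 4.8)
The plan is to derive Theorem \ref{T:Obs} from Theorem \ref{T:Contr} applied to the dual system $\Si_{\cP_d}\sim(A^\intercal,C^\intercal,B^\intercal,D^\intercal)$, by passing to orthogonal complements. Write $\cR_d:=\cR(A^\intercal,C^\intercal)$ for the reachable subspace of the dual system, and let $\cR_{d,i}$, $\overline{\cR}_{d,j}^i$, $\widetilde{\cR}_{d,j}^i$, $\overline{\cR}_d$, $\cR_d^\circ$, $\widetilde{\cR}_d$ be the subspaces from Section \ref{S:UpControl} formed for this dual system over the dual poset $\cP_d$, for which ${\downarrow}_d={\uparrow}$ and ${\uparrow}_d={\downarrow}$. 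I will use a few elementary facts: block compression commutes with transposition, $(M^\intercal)(R,S)=(M(S,R))^\intercal$; for a subspace $V$ of an orthogonal direct sum $\cW=\bigoplus_k\cW_k$ and any index $j$ one has $(P_{\cW_j}V)^{\perp}=V^\perp\cap\cW_j$ with the complement on the left taken inside $\cW_j$ and $V^\perp$ taken inside $\cW$; and, in finite dimensions, $(\bigcap_iV_i)^\perp=\sum_iV_i^\perp$, $(\bigoplus_kW_k)^\perp=\bigoplus_kW_k^\perp$ (summand complements taken inside the $\cW_k$), and $V^{\perp\perp}=V$.

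First I would establish the pointwise duality identities. Since $A^\intercal({\uparrow}i,{\uparrow}i)=A({\uparrow}i,{\uparrow}i)^\intercal$, $C^\intercal({\uparrow}i,i)=C(i,{\uparrow}i)^\intercal$ and ${\uparrow}i={\downarrow}_di$, the definition of the $i$-downstream reachable set of the dual system gives $\cR_{d,i}=\cR(A({\uparrow}i,{\uparrow}i)^\intercal,C(i,{\uparrow}i)^\intercal)$; combining this with $\cO(C(i,{\uparrow}i),A({\uparrow}i,{\uparrow}i))=\cC(A({\uparrow}i,{\uparrow}i)^\intercal,C(i,{\uparrow}i)^\intercal)^\intercal$ from \eqref{eqcCcO} and $\ker M^\intercal=(\im M)^\perp$, the $i$-upstream indistinguishable set satisfies $\cN_i=\cR_{d,i}^\perp$, the complement taken inside $\cX_{{\uparrow}i}=\cX_{{\downarrow}_di}$. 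Applying the $P$/$\cap$-complement fact inside $\cX_j\subseteq\cX_{{\uparrow}i}$ for $j\in{\uparrow}i$ then yields $\overline{\cN}_i^j=\cN_i\cap\cX_j=(P_{\cX_j}\cR_{d,i})^\perp=(\widetilde{\cR}_{d,j}^i)^\perp$ and $\widetilde{\cN}_i^j=P_{\cX_j}\cN_i=(\cX_j\cap\cR_{d,i})^\perp=(\overline{\cR}_{d,j}^i)^\perp$, complements inside $\cX_j$. Intersecting over $i\in{\downarrow}j$, turning a $\bigcap$ of complements into a $\sum$, and taking the orthogonal direct sum over $j\in P$, I obtain $\overline{\cN}=\widetilde{\cR}_d^\perp$ and $\widetilde{\cN}=\overline{\cR}_d^\perp$; likewise $\cN^{\circ j}=P_{\cX_j}\cN=(\cX_j\cap\cR_d)^\perp=(\cR_{d,j}^\circ)^\perp$ gives $\cN^\circ=(\cR_d^\circ)^\perp$, while $\cN=\cR_d^\perp$ is the classical identity relating unobservable and (dual-)reachable subspaces.

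With these identities in hand, Theorem \ref{T:Contr} for $\Si_{\cP_d}$, namely $\overline{\cR}_d\subseteq\cR_d^\circ\subseteq\cR_d\subseteq\widetilde{\cR}_d$, becomes (after complementing, which reverses inclusions) exactly the chain $\overline{\cN}\subseteq\cN\subseteq\cN^\circ\subseteq\widetilde{\cN}$ of \eqref{eqObsIncl}; complementing once more returns the second chain in \eqref{eqObsIncl} (which is, in fact, literally $\overline{\cR}_d\subseteq\cR_d^\circ\subseteq\cR_d\subseteq\widetilde{\cR}_d$ again). For \eqref{eqOverlineNj} I would argue directly from Lemma \ref{L:capNi}: intersecting $\cN=\bigcap_{i\in P}(\cN_i\oplus\cX_{P\backslash{\uparrow}i})$ with $\cX_j$, the factors with $i\notin{\downarrow}j$ contribute all of $\cX_j$ (since then $\cX_j\subseteq\cX_{P\backslash{\uparrow}i}$) and those with $i\in{\downarrow}j$ contribute $\cX_j\cap\cN_i=\overline{\cN}_i^j$ (as $\cX_j\subseteq\cX_{{\uparrow}i}$, which is orthogonal to $\cX_{P\backslash{\uparrow}i}$), whence $\cX_j\cap\cN=\bigcap_{i\in{\downarrow}j}\overline{\cN}_i^j=\overline{\cN}^j$; the claim $P_{\cX_j}\cN^\perp=\cX_j\ominus\overline{\cN}^j$ is then the $P$/$\cap$-complement fact once more. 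Finally, for the optimality assertion, given structured $\cQ=\bigoplus_j\cQ_j\subseteq\cN\subseteq\cS=\bigoplus_j\cS_j$ I pass to complements to get structured subspaces with $\cS^\perp\subseteq\cR_d\subseteq\cQ^\perp$, apply the optimality part of Theorem \ref{T:Contr} to $\Si_{\cP_d}$ to conclude $\cS^\perp\subseteq\cR_d^\circ$ and $\widetilde{\cR}_d\subseteq\cQ^\perp$, and complement back to get $\cN^\circ=(\cR_d^\circ)^\perp\subseteq\cS$ and $\cQ\subseteq\widetilde{\cR}_d^\perp=\overline{\cN}$; alternatively one can run the direct argument $\cQ_j=\cX_j\cap\cQ\subseteq\cX_j\cap\cN=\overline{\cN}^j$ and $\cN^{\circ j}=P_{\cX_j}\cN\subseteq P_{\cX_j}\cS=\cS_j$ using \eqref{eqOverlineNj}.

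The step I expect to be the main obstacle is the bookkeeping of the ambient spaces for the orthogonal complements: $\cN_i$ and $\cR_{d,i}$ live in $\cX_{{\uparrow}i}=\cX_{{\downarrow}_di}$ rather than in $\cX$, and the interchange between $P_{\cX_j}(\cdot)$ and $\cX_j\cap(\cdot)$ under complementation must be carried out inside the correct subspace at each stage. Once that is pinned down, everything reduces to formal manipulation of Theorem \ref{T:Contr} and of the ingredients already assembled for its proof.
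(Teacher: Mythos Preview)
Your proposal is correct and follows essentially the same approach as the paper: the paper explicitly states that Theorem~\ref{T:Obs} (together with Lemma~\ref{L:capNi} and Theorem~\ref{thmweaklocalobservable}) is not proved directly but is deduced from the duality relations of Section~\ref{S:Dual}, and your argument reproduces precisely those duality identities (your $\cN_i=\cR_{d,i}^\perp$, $\overline{\cN}=\widetilde{\cR}_d^\perp$, $\widetilde{\cN}=\overline{\cR}_d^\perp$, $\cN^\circ=(\cR_d^\circ)^\perp$ are Lemmas~\ref{lemRiperp=Ndi}, \ref{L:RijNjiDual}, \ref{L:FurtherDual}) and then invokes Theorem~\ref{T:Contr} for the dual system. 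The only minor caveat is that your direct argument for \eqref{eqOverlineNj} appeals to Lemma~\ref{L:capNi}, which in the paper is itself proved via duality from Lemma~\ref{L:SumRi}; you can avoid any circularity by instead using your already-established identity $\overline{\cN}^j=(\widetilde{\cR}_{d,j})^\perp$ together with Lemma~\ref{L:Rtilde} for the dual system.
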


The above theorem shows that $\overline{\cN}$ is the largest structured subspace of $\cX$ that is contained in $\cN$ and that $\cN^\circ$ is the smallest structured subspace of $\cX$ which contains $\cN$. We conclude this section with the analogue of weak local controllability.

\begin{defn}
The poset-causal system $\Sigma_\cP$ is called \emph{weakly locally observable} if
$$
\overline{\cN}^i_i = \{0\} \mbox{ for each $i\in P$}.
$$
\end{defn}

The analogues of Lemma \ref{L:LocContChar} and Theorem \ref{thmweaklocalcontrollable} are collected in the following result.

\begin{thm}\label{thmweaklocalobservable}
The poset-causal system $\Sigma_\cP$ is weakly locally observable if and only if each local pair $(C_{ii},A_{ii})$ is observable, that is, if and only if all local systems \eqref{eqLocalSys} are observable. If $\Sigma_\cP$ is weakly locally observable, then it is observable.
\end{thm}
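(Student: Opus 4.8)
The plan is to derive both claims from the corresponding controllability statements, Lemma \ref{L:LocContChar} and Theorem \ref{thmweaklocalcontrollable}, applied to the dual system $\Si_{\cP_d}\sim(A^\intercal,C^\intercal,B^\intercal,D^\intercal)$ of Definition \ref{D:DualSys}, which is again poset-causal, now over $\cP_d$ with ${\downarrow}_d i={\uparrow}i$. Everything rests on the single bridge equivalence
\[
\Si_\cP\text{ is weakly locally observable}\iff\Si_{\cP_d}\text{ is weakly locally controllable.}
\]
Granting it, the first assertion follows because Lemma \ref{L:LocContChar} applied to $\Si_{\cP_d}$ turns the right-hand side into controllability of each local pair $\bigl((A^\intercal)_{ii},(C^\intercal)_{ii}\bigr)=(A_{ii}^\intercal,C_{ii}^\intercal)$, which classically is equivalent to observability of each $(C_{ii},A_{ii})$; and the second follows because Theorem \ref{thmweaklocalcontrollable} applied to $\Si_{\cP_d}$ makes a weakly locally controllable $\Si_{\cP_d}$ controllable, i.e.\ $\im\cC(A^\intercal,C^\intercal)=\cX$, so that $\cN(C,A)=\kr\cO(C,A)=\bigl(\im\cC(A^\intercal,C^\intercal)\bigr)^\perp=\cX^\perp=\{0\}$ by \eqref{eqcCcO}, i.e.\ $\Si_\cP$ is observable.

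To prove the bridge equivalence I would fix $i\in P$ and unwind the two sides. The $i$-th downstream system of $\Si_{\cP_d}$ has state matrix $A^\intercal({\uparrow}i,{\uparrow}i)$ and input matrix $C^\intercal({\uparrow}i,i)$; since block compression commutes with transposition, these equal $\bigl(A({\uparrow}i,{\uparrow}i)\bigr)^\intercal$ and $\bigl(C(i,{\uparrow}i)\bigr)^\intercal$, so this system is exactly the transpose of the $i$-th upstream system \eqref{eqUpSys} of $\Si_\cP$. Hence its downstream reachable set satisfies
\[
\im\,\cC\bigl(A^\intercal({\uparrow}i,{\uparrow}i),C^\intercal({\uparrow}i,i)\bigr)=\im\,\cC\bigl(A({\uparrow}i,{\uparrow}i)^\intercal,C(i,{\uparrow}i)^\intercal\bigr)=\im\,\cO\bigl(C(i,{\uparrow}i),A({\uparrow}i,{\uparrow}i)\bigr)^\intercal=\cN_i(C,A)^\perp,
\]
the orthogonal complement taken inside $\cX_{{\uparrow}i}$. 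I would then use the elementary fact that for subspaces $\cM,\cX_i\subseteq\cX_{{\uparrow}i}$ one has $\bigl(P_{\cX_i}\cM^\perp\bigr)^\perp=\cX_i\cap\cM$ (complement inside $\cX_i$); with $\cM=\cN_i(C,A)$ this yields
\[
P_{\cX_i}\,\im\,\cC\bigl(A^\intercal({\uparrow}i,{\uparrow}i),C^\intercal({\uparrow}i,i)\bigr)=\cX_i\iff\cX_i\cap\cN_i(C,A)=\overline{\cN}_i^i=\{0\}.
\]
Quantifying over $i\in P$, the left-hand condition is precisely weak local controllability of $\Si_{\cP_d}$ and the right-hand condition precisely weak local observability of $\Si_\cP$, which is the bridge.

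The main obstacle is the index bookkeeping in the second paragraph: one must check carefully that compression-then-transpose agrees with transpose-then-compression on the relevant row/column sets, so that the $i$-th downstream system of $\Si_{\cP_d}$ really is the transposed $i$-th upstream system of $\Si_\cP$, and that every orthogonal complement is consistently taken inside $\cX_{{\uparrow}i}$ rather than $\cX$. Once those identifications are secured, nothing else has to be computed. As a self-contained alternative avoiding the dual system, one can instead copy the proofs of Lemma \ref{L:LocContChar} and Theorem \ref{thmweaklocalcontrollable}: the identity $C(i,{\uparrow}i)A({\uparrow}i,{\uparrow}i)^kI_{\un{n}}({\uparrow}i,i)=C_{ii}A_{ii}^k$ (valid because, by antisymmetry, the $i$-th column of $A({\uparrow}i,{\uparrow}i)$ vanishes outside its diagonal block) gives $\overline{\cN}_i^i=\kr\cO(C_{ii},A_{ii})$, and an induction over the layers $L_k$ of \eqref{eqLk} dual to the one in the proof of Theorem \ref{thmweaklocalcontrollable} gives observability; the duality route is shorter.
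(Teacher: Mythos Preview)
Your proposal is correct and follows essentially the same route as the paper: the paper derives Theorem~\ref{thmweaklocalobservable} from the duality results of Section~\ref{S:Dual}, in particular from the identity $(\overline{\cN}_i^i)^d=\cX_i\ominus\widetilde{\cR}_i^i$ of Lemma~\ref{L:RijNjiDual} (whose proof uses precisely the subspace identity you invoke, namely \eqref{eqSubsId2}), which gives your bridge equivalence, and then applies Lemma~\ref{L:LocContChar} and Theorem~\ref{thmweaklocalcontrollable} to the dual system. Your bookkeeping concerns about compression versus transposition are handled in the paper by Lemma~\ref{lemRiperp=Ndi}.
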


All inclusions in \eqref{eqObsIncl} can be strict and it need not be the case that $\bigoplus_{j\in P} \mathcal{N}^j_j$ contains $\cN$. Examples that prove these claims can be obtained from the examples in the previous section and the duality relations explained in the next section. We present here an extension of Example \ref{exLargeEx} that will be useful in the sequel.

\begin{ex}\label{exObsEx}
	Consider the poset $\cP_4$ given in Example \ref{ex5posets} and the poset-causal system $\Si_{\cP_4}\sim(A,0,C,0)$ with $A\in\cI_{\cP_4}^{\un{n}\times\un{n}}$ and $\un{n}$ as in Example \ref{exLargeEx}, $\un{r}=(1,1,1,1)$  and $C\in\cI_{\cP_4}^{\un{r}\times\un{n}}$ given by
\begin{align*}
C \!& = \!\!\left[\begin{array}{c c|c c|c c c|c c c c}
1 & 0 &  &  &  &  &  &  &  &  &  \\\hline
0 & 1 & 0 & 1 &  &  &  &  &  &  & \\\hline
&  &  &  & 0 & 1 & 0 &  &  &  & \\\hline
1 & 0 & 1 & 0 & 1 & 0 & 0 & 0 & 0 & 1 & 0.
\end{array}\right]
\end{align*}
In this case
\[
\cN  = \text{span}\{(-e_2+e_4),(-e_5+e_{10}),e_8,e_9,e_{11}\}
\]
and the upstream indistinguishable sets are given by
\begin{align*}
\cN_1 & =\text{span}\{e_2\} \varsubsetneq \cX_1,\quad
\cN_2  =\text{span}\{e_1,(-e_2+e_4),e_3\} \varsubsetneq \cX_1\oplus \cX_2,\\
\cN_3 & =\text{span}\{e_5,e_7\} \varsubsetneq  \cX_3,\quad
\cN_4  =\text{span}\{e_2,e_4,(-e_5+e_{10}),e_6,e_8,e_9,e_{11}\} \varsubsetneq \cX.
\end{align*}
One can further compute that
\begin{align*}
\overline{\cN}^1 = \overline{\cN}^2  = \overline{\cN}^3  =  \{0\},& \quad \overline{\cN}^4  = \text{span}\{e_8,e_9,e_{11}\},\\
\wtil{\cN}^1  = \text{span}\{e_2\} = \cN^{\circ 1},
\quad  \wtil{\cN}^2  = \text{span}& \{e_4\} = \cN^{\circ 2},
\quad \wtil{\cN}^3  = \text{span}\{e_5\} = \cN^{\circ 3},\\
\wtil{\cN}^4  = \text{span}\{e_8,e_9,& e_{10},e_{11} \} = \cN^{\circ 4},
\end{align*}
from which it follows that
\begin{align*}
\overline{\cN} = \text{span}\{e_8,e_9,e_{11}\}, \quad
\wtil{\cN} = \text{span}\{e_2,e_4,e_5,e_8,e_9,e_{10},e_{11}\} = \cN^\circ.
\end{align*}
This shows that
\[
\{0\}\varsubsetneq \overline{\cN} \varsubsetneq \cN \varsubsetneq \cN^\circ = \wtil{\cN} \varsubsetneq \cX.
\]
Hence the system is not observable, neither is independently or weakly upstream observable. Furthermore, no structured subspace can be strictly include between $\overline{\cN}$ and $\cN$ or between $\cN$ and $\cN^\circ=\wtil{\cN}$. In particular, unlike in Example \ref{exLargeEx}, here the two subspaces $\ov{\cN}$ and $\wtil{\cN}$ of $\cX$ associated with the poset-causal system are the optimal structured subspaces that are included in $\cN$ and include $\cN$, respectively.
\end{ex}

\section{Duality}\label{S:Dual}

For classical centralized systems, controllability and observability are related through the duality identities
\[
\cR^d = \cN^\perp \quad \ands \quad \cN^d = \cR^\perp.
\]
Here $\cR^d=\cR(A^\intercal,C^\intercal)$ and $\cN^d=\cN(B^\intercal,A^\intercal)$ are the spaces of reachable and indistinguishable states, respectively, of the dual system. In this section we show that there are similar duality relations for the various notions of controllability and observability introduced in this paper. Such observations were not made in \cite{KRvS12}, since the subclass of poset-causal systems considered there is not closed under duality of the underlying posets.

The following theorem is the main result of this section.

\begin{thm}\label{thmduality}
Let $\Si_\cP\sim(A,B,C,D)$ be a poset-causal system, with dual system $\Sigma_{\mathcal{P}_d}\sim(A_d,B_d,C_d,D_d)$. Define $\overline{\cR}$, $\cR^{\circ}$, $\widetilde{\cR}$ as in \eqref{eqR_joverlinetilde} and \eqref{eqRcirc} and $\overline{\cN}$, $\cN^{\circ}$, $\widetilde{\cN}$ as in \eqref{eqN_joverlinetilde}, and define $(\overline{\cR})^d$, $(\cR^{\circ })^d$, $(\widetilde{\cR})^d$, $(\overline{\cN})^d$, $(\cN^{\circ})^d$, $(\widetilde{\cN})^d$ analogously for $\Sigma_{\mathcal{P}_d}$. Then
\[
(\overline{\cR})^d=\widetilde{\cN}^\perp\!\!\!\!\!,\ \ \
(\cR^{\circ})^d=\cN^{\circ \perp}\!\!\!\!\!\!\!,\ \ \
(\widetilde{\cR})^d=\overline{\cN}^\perp\!\!\!\!\!,\ \ \
(\overline{\cN})^d=\widetilde{\cR}^\perp\!\!\!\!,\ \ \
(\cN^{\circ})^d=\cR^{\circ \perp}\!\!\!\!\!\!,\ \ \
(\widetilde{\cN})^d=\overline{\cR}^\perp \!\!.
\]
In particular, the following equivalences hold:
\begin{enumerate}
\item[(i)] $\Sigma_{\mathcal{P}}$ is upstream controllable if and only if $\Sigma_{\mathcal{P}_d}$ is downstream observable.
		
		
\item[(ii)] $\Sigma_{\mathcal{P}}$ is weakly locally controllable if and only if $\Sigma_{\mathcal{P}_d}$ is weakly locally observable.
		
	\end{enumerate}
\end{thm}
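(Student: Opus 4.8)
### Proof Proposal for Theorem \ref{thmduality}

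The plan is to exploit the classical duality $\cR^d = \cN^\perp$, $\cN^d = \cR^\perp$ (applied to various subsystems) together with the block-compression machinery of Section \ref{S:PosetCausal}. The starting observation is that the dual system $\Sigma_{\cP_d}\sim(A^\intercal, C^\intercal, B^\intercal, D^\intercal)$ has underlying poset $\cP_d$, so that ${\downarrow}_d i = {\uparrow}i$ and ${\uparrow}_d i = {\downarrow}i$. Hence the $i$-downstream system of $\Sigma_{\cP_d}$ is built from the blocks $A^\intercal({\uparrow}i,{\uparrow}i) = A({\uparrow}i,{\uparrow}i)^\intercal$ and $C^\intercal({\uparrow}i,i) = C(i,{\uparrow}i)^\intercal$, which are exactly the transposes of the matrices defining the $i$-th upstream system of $\Sigma_\cP$. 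Applying classical duality on the finite-dimensional space $\cX_{{\uparrow}i}$ gives the \emph{local} identity
\[
\cR_i^d = \cR\bigl(A({\uparrow}i,{\uparrow}i)^\intercal, C(i,{\uparrow}i)^\intercal\bigr) = \cN\bigl(C(i,{\uparrow}i), A({\uparrow}i,{\uparrow}i)\bigr)^\perp = \cN_i^\perp,
\]
where the orthogonal complement is taken inside $\cX_{{\uparrow}i}$. This is the engine of the whole proof; everything else is bookkeeping about how the $\bigoplus$, $\sum$, $\cap$, $P_{\cX_j}$ operations in \eqref{eqR_joverlinetilde}, \eqref{eqRcirc}, \eqref{eqN_joverlinetilde} interact with orthogonal complements.

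First I would establish the two ``extreme'' identities $(\overline{\cR})^d = \widetilde{\cN}^\perp$ and $(\widetilde{\cR})^d = \overline{\cN}^\perp$, and their mirror images $(\overline{\cN})^d = \widetilde{\cR}^\perp$, $(\widetilde{\cN})^d = \overline{\cR}^\perp$. For these I would first record the elementary subspace facts: for subspaces $V_j \subseteq \cX_j$ indexed by $j\in P$, one has $(\bigoplus_j V_j)^\perp = \bigoplus_j (\cX_j \ominus V_j)$ (orthogonality of the local spaces), and for subspaces $W_i$ contained in a single $\cX_j$, $(\sum_i W_i)^\perp = \bigcap_i W_i^\perp$ and $(\bigcap_i W_i)^\perp = \sum_i W_i^\perp$ within $\cX_j$. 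Next, the crucial cross-relation between $\overline{\cdot}$ and $\widetilde{\cdot}$: for any subspace $\cK \subseteq \cX_{{\uparrow}i}$, taking complements inside $\cX_{{\uparrow}i}$, one has $(\cX_j \cap \cK)^\perp \cap \cX_j = P_{\cX_j}(\cK^\perp)$ and $(P_{\cX_j}\cK)^\perp \cap \cX_j = \cX_j \cap \cK^\perp$ — the standard fact that $P_{\cX_j}$ and ``$\cap\,\cX_j$'' are adjoint under complementation. Feeding $\cK = \cN_i$ and $\cK^\perp = \cR_i^d$ into these, the building blocks $\overline{\cR}_j^i(A_d,\ldots)$ match $(\widetilde{\cN}_i^j)^\perp$ and $\widetilde{\cR}_j^i(A_d,\ldots)$ matches $(\overline{\cN}_i^j)^\perp$. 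Summing/intersecting over $i \in {\uparrow}_d j = {\downarrow}j$ and then over $j$, using the elementary facts above and the definitions \eqref{eqR_joverlinetilde}, \eqref{eqN_joverlinetilde}, delivers $(\overline{\cR})^d = \widetilde{\cN}^\perp$ and $(\widetilde{\cR})^d = \overline{\cN}^\perp$. The dual statements follow by the same argument applied to $\Sigma_{\cP_d}$ (whose dual is $\Sigma_\cP$, since $(\cP_d)_d = \cP$), or by taking complements.

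For the middle identity $(\cR^\circ)^d = \cN^{\circ\perp}$ I would use Lemma \ref{L:Rtilde} and its observational analogue: $\cR_j^\circ = \cX_j \cap \cR$ and $\widetilde{\cR}_j = P_{\cX_j}\cR$, and dually $\cN^{\circ j} = P_{\cX_j}\cN$, $\overline{\cN}^j = \cX_j \cap \cN$. Classical global duality gives $\cR^d = \cN^\perp$ on all of $\cX$. Then $(\cR^\circ)^d_j = \cX_j \cap \cR^d = \cX_j \cap \cN^\perp$, while $(\cN^{\circ j})^\perp \cap \cX_j = (P_{\cX_j}\cN)^\perp \cap \cX_j = \cX_j \cap \cN^\perp$ by the same $P_{\cX_j}$/$\cap$ adjunction; summing the direct sums over $j$ gives $(\cR^\circ)^d = \cN^{\circ\perp}$, and $(\cN^\circ)^d = \cR^{\circ\perp}$ symmetrically. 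Finally, the two stated equivalences are immediate: $\widetilde{\cR} = \cX \iff \widetilde{\cR}^\perp = \{0\} \iff (\overline{\cN})^d = \{0\}$, i.e.\ $\Sigma_\cP$ upstream controllable $\iff$ $\Sigma_{\cP_d}$ (weakly) downstream observable (matching the terminology of Section \ref{S:DownObserve}); and by Lemma \ref{L:LocContChar} together with its observational analogue in Theorem \ref{thmweaklocalobservable}, weak local controllability of $\Sigma_\cP$ means every $(A_{ii},B_{ii})$ is controllable, equivalently every $(B_{ii}^\intercal, A_{ii}^\intercal) = ((C_d)_{ii}, (A_d)_{ii})$ is observable, equivalently $\Sigma_{\cP_d}$ is weakly locally observable.

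I expect the main obstacle to be purely notational rather than conceptual: keeping straight which orthogonal complements are taken inside $\cX_{{\uparrow}i}$ versus inside $\cX_j$ versus inside all of $\cX$, and verifying carefully that $P_{\cX_j}$ restricted to the relevant $\cX_{{\uparrow}i}$ behaves correctly (it does, since $\cX_j \subseteq \cX_{{\uparrow}i}$ whenever $j \in {\uparrow}i$, equivalently $i \in {\downarrow}j$, which is exactly the index range in \eqref{eqN_joverlinetilde}). The one genuinely substantive lemma needed — that $P_{\cX_j}$ and ``intersect with $\cX_j$'' are exchanged under complementation, i.e.\ $(P_{\cX_j}\cK)^{\perp} = \cX_j\ominus(\cX_j\cap\cK^\perp)$ as subspaces of $\cX_j$ — is a one-line consequence of $P_{\cX_j}\cK = (\cX_j \cap \cK^\perp)^{\perp_{\cX_j}}$, which in turn follows from $\langle P_{\cX_j}\xi, \eta\rangle = \langle \xi, \eta\rangle$ for $\eta \in \cX_j$. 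Once this is in hand, the six identities are a mechanical unwinding of definitions.
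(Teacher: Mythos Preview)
Your proposal is correct and follows essentially the same route as the paper: the paper also starts from the local duality $\cR_i^d=\cX_{{\uparrow}i}\ominus\cN_i$ (Lemma \ref{lemRiperp=Ndi}), then uses the identity $P_{\cY_2}(\cY_1^\perp)=\cY_2\ominus(\cY_1\cap\cY_2)$ (your ``$P_{\cX_j}$/$\cap\,\cX_j$ adjunction'') together with the standard sum/intersection complement rules to pass to the building blocks $\overline{\cR}_i^j$, $\widetilde{\cR}_i^j$, $\overline{\cN}_j^i$, $\widetilde{\cN}_j^i$ (Lemma \ref{L:RijNjiDual}) and then to $\overline{\cR}_j$, $\cR_j^\circ$, $\widetilde{\cR}_j$, $\overline{\cN}^j$, $\cN^{\circ j}$, $\widetilde{\cN}^j$ (Lemma \ref{L:FurtherDual}), with the $\cR^\circ$/$\cN^\circ$ case handled via global duality $\cR^d=\cN^\perp$ exactly as you outline. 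The only difference is organizational: the paper packages the steps into three explicit lemmas rather than a single narrative.
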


The identities in Theorem \ref{thmduality} will be proved via several intermediate steps.

An essential role in our definitions of controllability and observability is played  by the downstream reachable and upstream unobservable sets $\cR_i$ and $\cN_i$ respectively.  The next lemma explains the connection of the two sets under duality. Here, we denote the  downstream reachable and upstream unobservable sets of the dual system $\Si_{\cP_d}$ by $(\cR_i)^d$ and $(\cN_i)^d$ respectively.

\begin{lem}\label{lemRiperp=Ndi}
Let $\Si_\cP\sim(A,B,C,D)$ be a poset-causal system, with dual system $\Sigma_{\mathcal{P}_d}\sim(A_d,B_d,C_d,D_d)$. Then
\[
\cX_{{\uparrow}i}\ominus (\cR_i)^d = \cN_i \quad \ands \quad
\cX_{{\downarrow}i}\ominus (\mathcal{N}_i)^d = \cR_i\quad \mbox{for each}\quad i\in P.
\]
\end{lem}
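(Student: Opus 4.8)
The plan is to reduce both equalities to the classical controllability/observability duality $\cR(A^\intercal,C^\intercal)=\cN(C,A)^\perp$ and $\cN(B^\intercal,A^\intercal)=\cR(A,B)^\perp$ recalled at the start of this section, applied not to the global system but to the $i$-th upstream and the $i$-th downstream subsystems, respectively. The only extra ingredient needed is a bookkeeping identity relating compressions of a transpose to transposes of compressions: for any block matrix $X\in\BR^{\un{a}\times\un{b}}$ and any $R,S\subseteq P$ one has $(X^\intercal)(R,S)=\big(X(S,R)\big)^\intercal$, which is immediate from Definition \ref{D:BlockCompressions}. Combined with the identities ${\downarrow}_d R={\uparrow}R$ and ${\uparrow}_d R={\downarrow}R$ noted after Definition \ref{D:dualposet}, and with $\Sigma_{\cP_d}\sim(A^\intercal,C^\intercal,B^\intercal,D^\intercal)$ from Definition \ref{D:DualSys}, this is all that is required.

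First I would unwind the $i$-downstream reachable set of the dual system. By definition it equals $\cR\big(A^\intercal({\downarrow}_d i,{\downarrow}_d i),\,C^\intercal({\downarrow}_d i,i)\big)$; using ${\downarrow}_d i={\uparrow}i$ and the transpose--compression identity this becomes $\cR\big((A({\uparrow}i,{\uparrow}i))^\intercal,(C(i,{\uparrow}i))^\intercal\big)$, a subspace of $\cX_{{\uparrow}i}$. Now $\big(A({\uparrow}i,{\uparrow}i),C(i,{\uparrow}i)\big)$ is a classical state--output pair with state space $\cX_{{\uparrow}i}$, and applying the classical duality $\cR(A^\intercal,C^\intercal)=\cN(C,A)^\perp$ \emph{inside} $\cX_{{\uparrow}i}$ gives $(\cR_i)^d=\cN\big(C(i,{\uparrow}i),A({\uparrow}i,{\uparrow}i)\big)^\perp=\cN_i^\perp$, the orthogonal complement being taken in $\cX_{{\uparrow}i}$. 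Hence $\cX_{{\uparrow}i}\ominus(\cR_i)^d=\cN_i$.

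The second identity is entirely symmetric. The $i$-upstream indistinguishable set of $\Sigma_{\cP_d}$ is $\cN\big(B^\intercal(i,{\uparrow}_d i),A^\intercal({\uparrow}_d i,{\uparrow}_d i)\big)$, which by ${\uparrow}_d i={\downarrow}i$ and the transpose--compression identity equals $\cN\big((B({\downarrow}i,i))^\intercal,(A({\downarrow}i,{\downarrow}i))^\intercal\big)$, a subspace of $\cX_{{\downarrow}i}$; applying $\cN(B^\intercal,A^\intercal)=\cR(A,B)^\perp$ inside $\cX_{{\downarrow}i}$ to the pair $\big(A({\downarrow}i,{\downarrow}i),B({\downarrow}i,i)\big)$ yields $(\cN_i)^d=\cR_i^\perp$, so $\cX_{{\downarrow}i}\ominus(\cN_i)^d=\cR_i$. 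The one point that genuinely needs care — and where a slip is most likely — is tracking which ambient space each orthogonal complement is formed in: it must be $\cX_{{\uparrow}i}$ (respectively $\cX_{{\downarrow}i}$), the state space of the relevant subsystem, and not the full state space $\cX$. Once the subsystems of $\Sigma_{\cP_d}$ are correctly identified with the transposed compressions above, the proof is just two applications of classical duality and there is no further obstacle.
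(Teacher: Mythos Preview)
Your proof is correct and follows essentially the same approach as the paper: identify $(\cR_i)^d$ with $\cR\big(A({\uparrow}i,{\uparrow}i)^\intercal,\,C(i,{\uparrow}i)^\intercal\big)$ via ${\downarrow}_d i={\uparrow}i$ and $(A_d,B_d)=(A^\intercal,C^\intercal)$, then invoke classical duality inside $\cX_{{\uparrow}i}$, and argue the second identity symmetrically. Your version is slightly more explicit than the paper's in isolating the transpose--compression identity $(X^\intercal)(R,S)=(X(S,R))^\intercal$ and in flagging that the orthogonal complements live in $\cX_{{\uparrow}i}$ and $\cX_{{\downarrow}i}$ rather than in $\cX$, but the argument is the same.
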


\begin{proof}[\bf Proof]
%
Fix a $i\in P$. Note that $\cN_i\subseteq \cX_{{\uparrow}i}$ and $(\cR_i)^d\subseteq \cX_{{\downarrow_d}i}=\cX_{{\uparrow}i}$. Then
\begin{align*}
(\cR_i)^d & =\cR(A_d({\downarrow}_d i,{\downarrow}_d i),B_d({\downarrow}_d i,i))
=\cR(A^{\intercal}({\uparrow} i,{\uparrow} i),C^{\intercal}({\uparrow} i,i))\\
&=\cR(A({\uparrow} i,{\uparrow} i)^{\intercal},C(i,{\uparrow} i)^{\intercal}).
\end{align*}
By the standard duality identity, we have
\begin{align*}
\cX_{{\uparrow}i}\ominus (\cR_i)^d  & = \cX_{{\uparrow}i}\ominus \cR(A({\uparrow} i,{\uparrow} i)^{\intercal},C(i,{\uparrow} i)^{\intercal})
=\cN(C(i,{\uparrow} i), A({\uparrow} i,{\uparrow} i))=\cN_i.
\end{align*}
The identity $\cX_{{\downarrow}i}\ominus (\mathcal{N}_i)^d = \cR_i$ follows similarly.
\end{proof}

The relations between the subspaces $\overline{\cR}_i^j$, $\wtil{\cR}_i^j$, $\overline{\cN}_j^i$, $\wtil{\cN}_j^i$ and the related subspaces for the dual system, denoted $(\overline{\cR}_i^j)^d$, $(\wtil{\cR}_i^j)^d$, $(\overline{\cN}_j^i)^d$, $(\wtil{\cN}_j^i)^d$, respectively, is less straightforward. They are listed in Lemma \ref{L:RijNjiDual}, the proof of which relies on some general identities in finite dimensional inner product spaces. Let $\cY_1,\cY_2, \ldots,\cY_n$ be subspaces of a finite dimensional inner product space $\cY$. Then
\begin{equation}\label{eqSubId1}
\Bigl(\bigcap_{i=1}^n\cY_i\Bigr)^\perp = \sum_{i=1}^n\cY_i^\perp.
\end{equation}
This follows from extending the well known and easily proved identity $(\cY_1\cap \cY_2)^\perp = \cY_1^\perp + \cY_2^\perp $. Since we work in finite dimensional spaces, we have $(\cY_i^\perp)^\perp=\cY_i$, and thus \eqref{eqSubId1} also gives us
\begin{equation}\label{eqSubId2}
\bigcap_{i=1}^n\cY_i^\perp = \Bigl(\sum_{i=1}^n\cY_i\Bigr)^\perp.
\end{equation}
Moreover, we also have
\begin{equation}\label{eqSubsId2}
P_{\cY_2}(\cY_1^\perp)=\cY_2 \ominus (\cY_1\cap \cY_2),
\end{equation}
where $P_{\cY_2}$ denotes te orthogonal projection in $\cY$ onto $\cY_2$. This identity is less straightforward and we include a proof.  Using the first identity we find that
\[
\cY_1^\perp + \cY_2^\perp = (\cY_1\cap \cY_2)^\perp = \cY_2\ominus (\cY_1\cap \cY_2) \oplus \cY_2^\perp.
\]
Projecting onto $\cY_2$ on both sides yields
\[
P_{\cY_2}(\cY_1^\perp)=P_{\cY_2} (\cY_2\ominus (\cY_1\cap \cY_2) \oplus \cY_2^\perp)= \cY_2\ominus (\cY_1\cap \cY_2),
\]
as claimed.

\begin{lem}\label{L:RijNjiDual}
	Let $\Si_\cP\sim(A,B,C,D)$ be a poset-causal system, with dual system $\Sigma_{\mathcal{P}_d}\sim(A_d,B_d,C_d,D_d)$. Then for all $i,j\in P$ we have
	\begin{align*}
		\bigl(\wtil{\cN}_j^i\bigr)^d=\cX_i \ominus \overline{\cR}_i^j, \ \ \bigl(\wtil{\cR}_i^j\bigr)^d=\cX_i \ominus \overline{\cN}_j^i,\ \
		\bigl(\overline{\cN}_j^i\bigr)^d=\cX_i\ominus \wtil{\cR}_i^j, \ \
		\bigl(\overline{\cR}_i^j\bigr)^d = \cX_i\ominus \wtil{\cN}_j^i.
	\end{align*}
\end{lem}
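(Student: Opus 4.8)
The plan is to reduce all four identities in Lemma~\ref{L:RijNjiDual} to a single computation, since the dual of the dual poset-causal system is the original system and the roles of $\cR$ and $\cN$ swap. First I would fix $i,j\in P$. Observe that the four displayed identities are not independent: applying the (yet to be proved) first identity to the dual system $\Si_{\cP_d}$ — whose dual is $\Si_\cP$ again and for which $(\cR_i)^{dd}=\cR_i$, $(\cN_j)^{dd}=\cN_j$ — turns $\bigl(\wtil{\cN}_j^i\bigr)^d=\cX_i\ominus\overline{\cR}_i^j$ into $\wtil{\cN}_j^i=\cX_i\ominus(\overline{\cR}_i^j)^d$, which upon taking orthocomplements in $\cX_i$ is exactly the fourth identity. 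Similarly the second and third are a dual pair. So it suffices to prove, say, the first and the second identity; I will do the first in detail and indicate that the second is entirely analogous with the roles of $\overline{\cR}$ and $\overline{\cN}$ interchanged (using $\overline{\cR}_i^j=\cX_i\cap\cR_j$ versus $\overline{\cN}_j^i=\cX_i\cap\cN_j$; note the index bookkeeping: for these to be defined one needs $i\in{\downarrow}j$, equivalently $j\in{\uparrow}i$, which is symmetric and causes no trouble).

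For the first identity, I would unwind the definitions. By definition $\wtil{\cN}_j^i = P_{\cX_i}\cN_j$ and $\overline{\cR}_i^j=\cX_i\cap\cR_j$. Passing to the dual system, $(\wtil{\cN}_j^i)^d = P_{\cX_i}\bigl((\cN_j)^d\bigr)$, where $(\cN_j)^d\subseteq\cX_{{\downarrow}j}$ is the upstream indistinguishable set of $\Si_{\cP_d}$ — note that the ``upstream'' set for $\cP_d$ is the ``downstream'' set for $\cP$, so $\cX_{{\uparrow}_dj}=\cX_{{\downarrow}j}$, matching the ambient space of $\cR_j$. Now apply Lemma~\ref{lemRiperp=Ndi}: it gives $\cX_{{\downarrow}j}\ominus(\cN_j)^d=\cR_j$, i.e.\ $(\cN_j)^d = \cX_{{\downarrow}j}\ominus\cR_j$, which is the orthocomplement of $\cR_j$ taken \emph{inside} $\cX_{{\downarrow}j}$. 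Since $\cX_i\subseteq\cX_{{\downarrow}j}$ (because $i\in{\downarrow}j$), the orthogonal complement of $\cR_j$ inside $\cX_{{\downarrow}j}$ and inside the full state space $\cX$ have the same projection onto $\cX_i$; concretely $\cX_{{\downarrow}j}\ominus\cR_j = (\cX\ominus\cR_j)\cap\cX_{{\downarrow}j}$ is not needed — it is cleaner to just write $(\cN_j)^d = \cX_{{\downarrow}j}\ominus\cR_j$ and note $P_{\cX_i}(\cX_{{\downarrow}j}\ominus\cR_j)=P_{\cX_i}(\cR_j^\perp)$ because $\cX_i\perp(\cX\ominus\cX_{{\downarrow}j})$. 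Then the identity \eqref{eqSubsId2} with $\cY_1=\cR_j$, $\cY_2=\cX_i$ yields $P_{\cX_i}(\cR_j^\perp)=\cX_i\ominus(\cR_j\cap\cX_i)=\cX_i\ominus\overline{\cR}_i^j$, which is the first claimed identity.

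I would then write out the analogue: $(\wtil{\cR}_i^j)^d=P_{\cX_i}\bigl((\cR_j)^d\bigr)$; by the other half of Lemma~\ref{lemRiperp=Ndi}, $(\cR_j)^d=\cX_{{\uparrow}j}\ominus\cN_j$, and since $\cX_i\subseteq\cX_{{\uparrow}j}$ (as $i\in{\uparrow}j$ here — again the index condition is symmetric) the same orthocomplement argument plus \eqref{eqSubsId2} with $\cY_1=\cN_j$, $\cY_2=\cX_i$ gives $P_{\cX_i}(\cN_j^\perp)=\cX_i\ominus(\cN_j\cap\cX_i)=\cX_i\ominus\overline{\cN}_j^i$, the second identity. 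Finally I would remark that the remaining two identities follow by applying these two to $\Si_{\cP_d}$ and using $(\cdot)^{dd}=\mathrm{id}$ together with $(\cX_i\ominus\cV)^\perp\cap\cX_i$ type manipulations — more precisely, orthocomplementing within $\cX_i$: if $(\wtil{\cN}_j^i)^d=\cX_i\ominus\overline{\cR}_i^j$ holds for every poset-causal system, then for $\Si_{\cP_d}$ it reads $\wtil{\cN}_j^i = \cX_i\ominus(\overline{\cR}_i^j)^d$, hence $(\overline{\cR}_i^j)^d=\cX_i\ominus\wtil{\cN}_j^i$. The main obstacle — really the only subtlety — is the bookkeeping around which ambient space the orthocomplements in Lemma~\ref{lemRiperp=Ndi} are taken in, and checking that projecting onto $\cX_i$ (a summand of the relevant downstream/upstream space) lets one replace a relative orthocomplement by an absolute one; once that is pinned down, everything is a direct application of \eqref{eqSubsId2} and Lemma~\ref{lemRiperp=Ndi}.
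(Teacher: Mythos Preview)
Your proof is correct and, for the first two identities, follows exactly the paper's route: unwind the definitions, invoke Lemma~\ref{lemRiperp=Ndi} to identify $(\cN_j)^d$ (resp.\ $(\cR_j)^d$) as the relative orthocomplement of $\cR_j$ (resp.\ $\cN_j$) inside $\cX_{{\downarrow}j}$ (resp.\ $\cX_{{\uparrow}j}$), and then apply identity~\eqref{eqSubsId2} with $\cY_2=\cX_i$. Your discussion of the ambient-space issue is sound; the paper handles it by simply taking $\cY=\cX_{{\downarrow}j}$ in~\eqref{eqSubsId2}, which amounts to the same thing.

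Where you differ slightly is in the last two identities. The paper proves the third identity $(\overline{\cN}_j^i)^d=\cX_i\ominus\wtil{\cR}_i^j$ by a separate direct computation, using the De~Morgan-type identity~\eqref{eqSubId2} (intersection of complements) rather than~\eqref{eqSubsId2}, and then says the fourth follows analogously. You instead obtain the third and fourth from the first and second by applying them to $\Si_{\cP_d}$ and using biduality $(\Si_{\cP_d})_d=\Si_\cP$. Your route is a touch more economical and makes the symmetry of the four statements transparent; the paper's route is more self-contained in that it never invokes biduality of the system. Both are entirely valid.
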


\begin{proof}[\bf Proof]
Using Lemma \ref{lemRiperp=Ndi} along with \eqref{eqSubsId2} with $\cY=\cX_{{\downarrow}j}$, $\cY_1=\cR_j$ and $\cY_2=\cX_i$ yields
\begin{align*}
\bigl(\wtil{\cN}_j^i\bigr)^d & = P_{\cX_i}(\cN_j)^d=P_{\cX_i}(\cX_{{\downarrow}j}\ominus\cR_j)=\cX_i\ominus (\cX_i\cap \cR_j)=\cX_i \ominus \overline{\cR}_i^j.
\end{align*}
A similar argument proves the identity $\bigl(\wtil{\cR}_i^j\bigr)^d=\cX_i \ominus \overline{\cN}_j^i$. Using the identity \eqref{eqSubId2}, with $n=2$, $\cY=\cX_{{\downarrow}j}$,  $\cY_1=(\cN_j)^d$ and $\cY_2=\cX_i$, gives
\begin{align*}
\bigl(\overline{\cN}_j^i\bigr)^d
&= \cX_i \cap (\cN_j)^d =(\cX_{{\downarrow}j}\ominus \cX_{{\downarrow}j\backslash i}) \cap (\cX_{{\downarrow}j}\ominus \cR_j)=\cX_{{\downarrow}j}\ominus(\cX_{{\downarrow}j\backslash i} + \cR_j)\\
&= \cX_{{\downarrow}j}\ominus (P_{\cX_i} \cR_j \oplus \cX_{{\downarrow}j\backslash i})= \cX_i \ominus (P_{\cX_i} \cR_j)=\cX_i \ominus\widetilde{\cR}_i^j.
\end{align*}
The identity $\bigl(\overline{\cR}_i^j\bigr)^d = \cX_i\ominus \wtil{\cN}_j^i$ can be derived analogously.
\end{proof}

\begin{cor}
The poset-causal system $\Sigma_\cP$ is weakly locally controllable (weakly locally observable) if and only if the dual system $\Sigma_{\cP_d}$ is weakly locally observable (weakly locally controllable).
\end{cor}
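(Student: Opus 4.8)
The plan is to deduce this directly from Lemma \ref{L:RijNjiDual} by specializing the indices to $j=i$. Recall that, by definition, $\Si_\cP$ is weakly locally controllable precisely when $\widetilde{\cR}_i^i=\cX_i$ for every $i\in P$, and weakly locally observable precisely when $\overline{\cN}_i^i=\{0\}$ for every $i\in P$; the same characterizations apply verbatim to $\Si_{\cP_d}$ with $\widetilde{\cR}_i^i$ and $\overline{\cN}_i^i$ replaced by the corresponding quantities $(\widetilde{\cR}_i^i)^d$ and $(\overline{\cN}_i^i)^d$ of the dual system. Note that $\overline{\cN}_i^i$, $\widetilde{\cR}_i^i$ and their dual counterparts are all well defined, since $i\in{\uparrow}i$ and $i\in{\downarrow}i$ by reflexivity of $\succeq$.

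First I would take the third identity of Lemma \ref{L:RijNjiDual} with $j$ replaced by $i$, giving $(\overline{\cN}_i^i)^d=\cX_i\ominus\widetilde{\cR}_i^i$ for every $i\in P$. Since $\cX_i$ is finite dimensional, $(\overline{\cN}_i^i)^d=\{0\}$ is equivalent to $\widetilde{\cR}_i^i=\cX_i$. Letting $i$ range over $P$, this says exactly that $\Si_{\cP_d}$ is weakly locally observable if and only if $\Si_\cP$ is weakly locally controllable. For the remaining assertion I would use the second identity of Lemma \ref{L:RijNjiDual}, again with $j$ replaced by $i$, namely $(\widetilde{\cR}_i^i)^d=\cX_i\ominus\overline{\cN}_i^i$; hence $(\widetilde{\cR}_i^i)^d=\cX_i$ holds if and only if $\overline{\cN}_i^i=\{0\}$, and running over all $i\in P$ yields that $\Si_{\cP_d}$ is weakly locally controllable if and only if $\Si_\cP$ is weakly locally observable. (This second equivalence could alternatively be obtained from the first, applied to $\Si_{\cP_d}$ in place of $\Si_\cP$, using $(\cP_d)_d=\cP$.)

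There is essentially no obstacle here: the content is entirely contained in Lemma \ref{L:RijNjiDual}, and the only points requiring a moment's care are bookkeeping ones — matching the dual-system quantities $(\widetilde{\cR}_i^i)^d$ and $(\overline{\cN}_i^i)^d$ with the defining conditions for weak local controllability, respectively observability, of $\Si_{\cP_d}$, and invoking finite dimensionality of $\cX_i$ to pass between ``$\cX_i\ominus V=\{0\}$'' and ``$V=\cX_i$''. One could equally well route the argument through Lemma \ref{L:LocContChar} together with its observability analogue in Theorem \ref{thmweaklocalobservable} and the elementary fact that $(C_{ii},A_{ii})$ is observable if and only if $(A_{ii}^\intercal,C_{ii}^\intercal)$ is controllable, but the subspace route above is the shortest.
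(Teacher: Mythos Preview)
Your proposal is correct and follows essentially the same approach as the paper: both arguments specialize Lemma \ref{L:RijNjiDual} to the case $j=i$ and read off the equivalences. The paper packages the componentwise identities into the direct-sum relations $\bigoplus_{i\in P}(\widetilde{\cR}_i^i)^d=(\bigoplus_{i\in P}\overline{\cN}_i^i)^\perp$ and $\bigoplus_{i\in P}(\overline{\cN}_i^i)^d=(\bigoplus_{i\in P}\widetilde{\cR}_i^i)^\perp$, whereas you argue index by index, but the content is identical.
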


\begin{proof}[\bf Proof]
By Lemma \ref{L:RijNjiDual} it follows that
\[
\bigoplus_{i\in P} \bigl(\widetilde{\cR}_i^i\bigr)^d = \Bigl(\bigoplus_{i\in P} \overline{\cN}_i^i \Bigr)^\perp \ands
\bigoplus_{i\in P} \bigl(\overline{\cN}_i^i\bigr)^d = \Bigl(\bigoplus_{i\in P} \widetilde{\cR}_i^i \Bigr)^\perp,
\]
from which we immediately obtain the result.
\end{proof}

We now prove duality results for $(\overline{\cR}_j)^d$, $(\widetilde{\cR}_j)^d$, $(\cR_j^{\circ})^d$, $(\overline{\cN}^j)^d$, $(\cN^{j \circ })^d$ and $(\widetilde{\cN}^j)^d$.

\begin{lem}\label{L:FurtherDual}
	Let $\Si_\cP\sim(A,B,C,D)$ be a poset-causal system, with dual system $\Sigma_{\mathcal{P}_d}\sim(A_d,B_d,C_d,D_d)$. Then for all $j\in P$ we have
	\begin{align*}
	\bigl( \overline{\cR}_j\bigr)^d = \cX_j \ominus \widetilde{\cN}^j, & &
	\bigl(\cR_j^\circ\bigr)^d = \cX_j \ominus \cN^{\circ j}, & &
	\bigl(\widetilde{\cR}_j\bigr)^d = \cX_j \ominus \overline{\cN}^j,\\
	\bigl(\overline{\cN}^j\bigr)^d = \cX_j \ominus \widetilde{\cR}_j, & &
	\bigl(\cN^{\circ j}\bigr)^d = \cX_j \ominus \cR^\circ_j, &  &
	\bigl(\widetilde{\cN}^j\bigr)^d = \cX_j \ominus \overline{\cR}_j.
	\end{align*}
\end{lem}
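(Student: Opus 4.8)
The plan is to establish the six identities in two batches of three, obtaining the three identities involving $(\overline{\cN}^j)^d$, $(\cN^{\circ j})^d$, $(\widetilde{\cN}^j)^d$ from the first three by the involutivity of duality. Before starting, I would record one elementary observation: orthogonal complementation taken \emph{relative to} $\cX_j$ turns intersections into sums, that is, if $\cY_1,\dots,\cY_n\subseteq\cX_j$, then $\cX_j\ominus\bigcap_k\cY_k=\sum_k(\cX_j\ominus\cY_k)$. This follows from \eqref{eqSubId1}: apply $P_{\cX_j}$ to both sides of $\bigl(\bigcap_k\cY_k\bigr)^\perp=\sum_k\cY_k^\perp$ and use \eqref{eqSubsId2} with $\cY_2=\cX_j$ to rewrite $P_{\cX_j}(\cY_k^\perp)=\cX_j\ominus\cY_k$ and $P_{\cX_j}\bigl(\bigl(\bigcap_k\cY_k\bigr)^\perp\bigr)=\cX_j\ominus\bigcap_k\cY_k$, all complements being legitimate since the subspaces involved lie in $\cX_j$.

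For the first batch I would unwind the definitions \eqref{eqR_joverlinetilde} and \eqref{eqRcirc} for the dual system $\Sigma_{\cP_d}$, using that the upstream set of $j$ in $\cP_d$ is the downstream set ${\downarrow}j$ in $\cP$. This gives $(\overline{\cR}_j)^d=\sum_{i\in{\downarrow}j}(\overline{\cR}_j^i)^d$ and $(\widetilde{\cR}_j)^d=\sum_{i\in{\downarrow}j}(\widetilde{\cR}_j^i)^d$. Applying Lemma \ref{L:RijNjiDual} termwise, with the roles of $i$ and $j$ interchanged, yields $(\overline{\cR}_j^i)^d=\cX_j\ominus\widetilde{\cN}_i^j$ and $(\widetilde{\cR}_j^i)^d=\cX_j\ominus\overline{\cN}_i^j$ for $i\in{\downarrow}j$; summing over $i\in{\downarrow}j$ and invoking the relative-complement identity of the first paragraph (with $\cY_i=\widetilde{\cN}_i^j$, respectively $\cY_i=\overline{\cN}_i^j$) produces $(\overline{\cR}_j)^d=\cX_j\ominus\widetilde{\cN}^j$ and $(\widetilde{\cR}_j)^d=\cX_j\ominus\overline{\cN}^j$ by the definitions in \eqref{eqN_joverlinetilde}. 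For the remaining identity I would use the classical duality $\cR^d=\cN^\perp$ to write $(\cR_j^\circ)^d=\cX_j\cap\cR^d=\cX_j\cap\cN^\perp$, and then \eqref{eqSubsId2} with $\cY_1=\cN^\perp$, $\cY_2=\cX_j$ (together with $(\cN^\perp)^\perp=\cN$) to get $P_{\cX_j}\cN=\cX_j\ominus(\cX_j\cap\cN^\perp)$, hence $(\cR_j^\circ)^d=\cX_j\cap\cN^\perp=\cX_j\ominus P_{\cX_j}\cN=\cX_j\ominus\cN^{\circ j}$.

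The second batch then follows by feeding the first into the dual system. Because $(A^\intercal)^\intercal=A$, $(B^\intercal)^\intercal=B$, and so on, and $(\cP_d)_d=\cP$, the dual of $\Sigma_{\cP_d}$ is $\Sigma_\cP$; so applying the identity $(\overline{\cR}_j)^d=\cX_j\ominus\widetilde{\cN}^j$ with $\Sigma_\cP$ replaced by $\Sigma_{\cP_d}$ turns its left side into $\overline{\cR}_j$ and turns the $\widetilde{\cN}^j$ on its right side into $(\widetilde{\cN}^j)^d$, so $\overline{\cR}_j=\cX_j\ominus(\widetilde{\cN}^j)^d$; taking the complement relative to $\cX_j$ gives $(\widetilde{\cN}^j)^d=\cX_j\ominus\overline{\cR}_j$. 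The same substitution in $(\cR_j^\circ)^d=\cX_j\ominus\cN^{\circ j}$ and $(\widetilde{\cR}_j)^d=\cX_j\ominus\overline{\cN}^j$ yields $(\cN^{\circ j})^d=\cX_j\ominus\cR^\circ_j$ and $(\overline{\cN}^j)^d=\cX_j\ominus\widetilde{\cR}_j$, completing the proof.

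I expect the one genuine difficulty to be bookkeeping: one must be careful about which constraint regime ($i\in{\downarrow}j$ versus $i\in{\uparrow}j$) each object lives in once it is reinterpreted over $\cP_d$, and correspondingly which of the four identities of Lemma \ref{L:RijNjiDual}, and after which relabelling of $i$ and $j$, is the right one to quote. To guard against sign and direction errors I would state explicitly at the outset how the subspaces attached to $\Sigma_{\cP_d}$ relate to those attached to $\Sigma_\cP$, and check the index ranges against the explicit computations in Example \ref{exObsEx}.
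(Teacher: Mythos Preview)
Your proposal is correct and follows essentially the same approach as the paper: unwind the definitions over $\cP_d$ using ${\uparrow}_d j={\downarrow}j$, apply Lemma~\ref{L:RijNjiDual} termwise, and collapse the resulting sum of relative complements via \eqref{eqSubId1}; the $(\cR_j^\circ)^d$ identity is handled separately via classical duality $\cR^d=\cN^\perp$ and \eqref{eqSubsId2}, just as in the paper. The only cosmetic difference is that you derive the three $\cN$-side identities from the three $\cR$-side ones by the involutivity $(\Sigma_{\cP_d})_d=\Sigma_\cP$, whereas the paper simply states that the remaining identities ``follow in a similar manner'' by direct computation; both routes are valid and equally short.
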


\begin{proof}[\bf{Proof}]
By \eqref{eqR_joverlinetilde}, Lemma \ref{L:RijNjiDual} and \eqref{eqSubId1}, we have
\begin{align*}
\bigl(\wtil{\cR}_j\bigr)^d
= \sum_{i\in {\uparrow}_d j} \bigl(\wtil{\cR}_j^i\bigr)^d
= \sum_{i\in {\downarrow} j} \bigl(\cX_j \ominus \overline{\cN}_i^j\bigr)
= \cX_j \ominus \bigcap_{i\in{\downarrow}j}\overline{\cN}_i^j
= \cX_j \ominus \overline{\cN}^j.
\end{align*}
The identities for $\bigl(\overline{\cR}_j\bigr)^d$, $\bigl(\widetilde{\cR}_j\bigr)^d$ and $\bigl(\widetilde{\cN}^j\bigr)^d$ follow in a similar manner. The identity for $\bigl(\cR_j^{\circ }\bigr)^d$ follows from Lemma \ref{lemRiperp=Ndi} and the identity \eqref{eqSubId2}:
\begin{align*}
\bigl(\cR_j^{\circ }\bigr)^d
&=\cX_j \cap \cR^d = \cX_{P\backslash j}^\perp \cap \cN^{\perp} =(\cX_{P\backslash j} + \cN)^\perp =( \cX_{P\backslash j} \oplus P_{\cX_j}\cN)^\perp\\
&= \cX_j \ominus \cN^{\circ j}.
\end{align*}
The identity for $\bigl(\cN^{j \circ }\bigr)^d$ follows similarly.
\end{proof}

Theorem \ref{thmduality} now follows directly from the identities in Lemma \ref{L:FurtherDual}.
Note that Theorems \ref{T:Obs} and \ref{thmweaklocalobservable} in Section \ref{S:DownObserve} also follow directly from the result obtained in this section.

\section{Minimality and Kalman reduction for poset-causal systems}\label{S:Kalman}

The concepts and theory of minimality  for poset-causal systems are problematic due to the additional structure in the form of the prescribed zero-block structure and the state space decomposition.

We first review the classical setting, before considering the case of poset-causal systems.

\subsection{The unstructured case}

If a classical state space system $\Si\sim(A,B,C,D)$ as in \eqref{eqLTI} is not minimal, one way of obtaining a minimal system that has the same input-output map goes through the Kalman decomposition, cf., \cite{DP00}. Define the subspaces
\begin{align*}
\cX_{co} := \cR\ominus(\cR\cap\cN)=P_{\cR}(\cN^\perp),\quad
\cX_{c\overline{o}} := \cR\cap\cN,\\
\cX_{\overline{c}o} := (\cR+\cN)^\perp, \quad
\cX_{\overline{co}} := \cN\ominus(\cR\cap\cN)=P_{\cN}(\cR^\perp).
\end{align*}
The alternative formulas given for $\cX_{co}$ and $\cX_{\overline{co}}$ follow from \eqref{eqSubsId2}. With the above subspaces of $\cX$ we obtain the following orthogonal sum decompositions:
\begin{align*}
\cX_{co}\oplus\cX_{c\overline{o}} =\cR, \quad
\cX_{c\overline{o}}\oplus\cX_{\overline{co}}& =\cN, \quad
\cX_{co}\oplus\cX_{c\overline{o}} \oplus \cX_{\overline{co}} = \cR+\cN \\
\cX = \cX_{co} \oplus \cX_{c\overline{o}} & \oplus \cX_{\overline{c}o} \oplus \cX_{\overline{co}}
\end{align*}
Since $\cR$ and $\cN$ are the smallest invariant subspaces of $A$ that contain $\im B$ and $(\im C)^\perp$, respectively, with respect to this decomposition of the state space, the system matrices $A$, $B$ and $C$ decompose in what is known as the {\em Kalman decomposition} of $\Si$:
\begin{align}\label{eqclassicKalman}
A &  = \begin{bmatrix}
A_{11} & 0 & A_{13} & 0\\
A_{21} & A_{22} & A_{23} & A_{24}\\
0 & 0 & A_{33} & 0\\
0 & 0 & A_{43} & A_{44}
\end{bmatrix}, \quad
B = \begin{bmatrix}
B_1\\
B_2\\
0\\
0\\
\end{bmatrix},\quad
C  = \begin{bmatrix}
C_1 & 0 & C_3 & 0
\end{bmatrix}.
\end{align}
Furthermore, the {\em Kalman reduction} of $\Si$, i.e., the state space system $\Si_\text{min}\sim(A_{11},B_1,C_1,D)$ is minimal and provides the same input-output map as $\Si$ (when in both cases the initial state is 0), because the moments of the two systems coincide:
\[
CA^{k}B=C_1 A_{11}^k B_1,\quad k=0,1,\ldots.
\]

In this paper we consider systems with additional structure, as a result of which we have to consider subspaces that are larger or smaller than $\cR$ and $\cN$ to maintain the structure. In the setting of the Kalman reduction, one can compress the system to a subspace of $\cX$ which contains $\cX_{co}$ in such a way that the moments are maintained. The next lemma provides a suggestion for such a subspace.

\begin{lem}\label{L:EquivReal}
Consider a state space system $\Si\sim(A,B,C,D)$ with reachable space $\cR$ and unobservable space $\cN$. Suppose $\cR'\subseteq\cR\subseteq\cR''$ and $\cN'\subseteq\cN$ are subspaces, and define $\cX'_1:=\cR''\ominus (\cR'\cap\cN')$. Then $\cX_{co} \subseteq \cX'_1$ and if $A'$, $B'$ and $C'$ are the compressions of $A,B$ and $C$ to $\cX'_1$, then
\[
CA^kB = C'A'^kB'\quad k=0,1,\ldots
\]
\end{lem}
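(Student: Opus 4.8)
The plan is to verify the two claims separately. First, the inclusion $\cX_{co}\subseteq\cX_1'$: recall $\cX_{co}=\cR\ominus(\cR\cap\cN)$. Since $\cR'\cap\cN'\subseteq\cR\cap\cN$, we have $(\cR\cap\cN)^\perp\subseteq(\cR'\cap\cN')^\perp$, and since $\cR\subseteq\cR''$, I would intersect: $\cX_{co}=\cR\cap(\cR\cap\cN)^\perp\subseteq\cR''\cap(\cR'\cap\cN')^\perp=\cR''\ominus(\cR'\cap\cN')=\cX_1'$. (One should check that $\cR''\cap(\cR'\cap\cN')^\perp$ really equals $\cR''\ominus(\cR'\cap\cN')$, which holds precisely because $\cR'\cap\cN'\subseteq\cR''$, so that the orthogonal complement of $\cR'\cap\cN'$ taken inside $\cR''$ is $\cR''\cap(\cR'\cap\cN')^\perp$.)

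For the moment identity, the key structural fact is that $\cX_1'$ contains $\im B$ and is $A$-invariant, while its orthogonal complement $\cX_1'^\perp$ is contained in $\ker C$ and is $A^\intercal$-invariant. Indeed $\im B\subseteq\cR\subseteq\cR''$ and $\im B\perp\cN\supseteq\cR'\cap\cN'$ (here I use $\cN'\subseteq\cN$ and that $\cR$ is contained in $\cN^\perp$ is false in general — rather $\im B\subseteq\cR$ and $\cR\cap\cN$ need not be orthogonal to $\im B$; the correct statement is $\im B\subseteq\cR\subseteq(\cR\cap\cN)^\perp{}^\perp$…). Let me instead argue directly: $\im B\subseteq\cR\subseteq\cR''$, and $\im B\subseteq\cX_{co}$ modulo $\cX_{c\overline o}$, but more simply, since $\cX_1'\supseteq\cX_{co}$ and $\cX_1'\subseteq\cR''$ with $\cR'\cap\cN'\subseteq\cR\cap\cN=\cX_{c\overline o}$, one gets the decomposition $\cX_1'=\cX_{co}\oplus(\text{something inside }\cX_{c\overline o}^\perp\text{-ish})$; to keep things clean I would instead verify the three properties $\im B\subseteq\cX_1'$, $A\cX_1'\subseteq\cX_1'$, and $\cX_1'^\perp\subseteq\ker C$ by hand. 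For $\im B\subseteq\cX_1'$: we have $\im B\subseteq\cR\subseteq\cR''$ and $\im B\perp\cR'\cap\cN'$ because $\cR'\cap\cN'\subseteq\cN$ and $\im B\subseteq\cR$, but $\cR\not\perp\cN$; so instead note $\cR'\cap\cN'\subseteq\cN\subseteq\ker\cO(C,A)$ — no. The honest route: $\cR'\cap\cN'\subseteq\cN$, and $\cR=\cX_{co}\oplus\cX_{c\overline o}$ with $\cX_{c\overline o}=\cR\cap\cN$; since $\cR'\cap\cN'\subseteq\cR\cap\cN=\cX_{c\overline o}$, the complement $\cR''\ominus(\cR'\cap\cN')$ contains $\cR''\ominus\cX_{c\overline o}\supseteq\cR\ominus\cX_{c\overline o}=\cX_{co}\ni$ everything of the form reachable-mod-unobservable; and since $\im B\subseteq\cR$ we may write $\im B\subseteq\cX_{co}\oplus\cX_{c\overline o}$, but the $\cX_{c\overline o}$-part may not lie in $\cX_1'$. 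I would therefore take the cleaner high-level approach below.

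\emph{The clean approach.} I would first establish that $\cR'\cap\cN'\subseteq\cN$ and hence, since $\cN$ is $A$-invariant and $\cN\subseteq\ker C$, that $\cN$ (and any subspace of it) behaves "transparently." Then I would use the standard fact that the moments $CA^kB$ are unchanged under compression to any subspace $\cW$ satisfying $\im B\subseteq\cW$, $A\cW\subseteq\cW$; indeed for such $\cW$, writing $P$ for the orthogonal projection onto $\cW$, one has $PB=B$, $PAP = AP$ on $\cW$ hence $(PAP)^kPB = PA^kB$, and moreover if additionally $\cW^\perp\subseteq\ker C$ then $CP=C$, giving $C'(A')^kB'=CP(PAP)^kPB = CPA^kB = CA^kB$. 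So the whole lemma reduces to checking: (1) $\im B\subseteq\cX_1'$; (2) $A\cX_1'\subseteq\cX_1'$; (3) $\cX_1'^\perp\subseteq\ker C$. For (3): $\cX_1'^\perp = (\cR'')^\perp\oplus(\cR'\cap\cN')$ (orthogonal sum inside $\cX$, valid since $\cR'\cap\cN'\subseteq\cR''$); now $(\cR'')^\perp\subseteq\cR^\perp\subseteq\cN^\perp{}^\perp$? No — $(\cR'')^\perp\subseteq\cR^\perp$, and $\cR^\perp$ need not be in $\ker C$. This shows (3) is \textbf{false} in general, so the compression cannot literally preserve $C$. The resolution — and this is the point I would highlight as the main subtlety — is that one does not need $\cX_1'^\perp\subseteq\ker C$; one only needs that the \emph{moments} agree, and for that it suffices that the extra directions in $\cX_1'$ beyond $\cX_{co}$ lie in $\cN$ (so $C$ annihilates them) while the extra directions in $\cX_1'^\perp$ beyond $(\cR+\cN)^\perp\oplus\cX_{\overline{co}}$ lie in $\cR^\perp$ and are $A^\intercal$-invariant-complemented appropriately. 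Concretely, I would decompose $\cX_1' = \cX_{co}\oplus\cM$ where $\cM\subseteq\cR''\cap(\cR'\cap\cN')^\perp$ and show $\cM\subseteq\cN\cap$(stuff), so that $B$'s image lands in $\cX_{co}\subseteq\cX_1'$, $A\cX_1'\subseteq\cX_1'$ using $A$-invariance of $\cR$, $\cN$, $\cR''$ if $\cR''$ is taken $A$-invariant — \textbf{and here I realize the hypothesis must implicitly include that $\cR''$ and $\cN'$ are chosen compatibly}; rereading, $\cR'\subseteq\cR\subseteq\cR''$ are just subspaces, so (2) genuinely requires care.

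Given these complications, the honest plan I would actually write is: reduce to the classical Kalman reduction by showing $C A^k B = C_1 A_{11}^k B_1$ (the $\cX_{co}$-compression, already in the paper) and then show that compressing to the \emph{larger} space $\cX_1'\supseteq\cX_{co}$ changes nothing because, with respect to the finer orthogonal decomposition $\cX = \cX_{co}\oplus\cX_{c\overline o}\oplus\cX_{\overline c o}\oplus\cX_{\overline{co}}$ and the block form \eqref{eqclassicKalman}, $\cX_1'$ is an $A$-invariant subspace containing $\cX_{co}\oplus\im B$ on which the "reachable-observable" block acts as $A_{11}$; the extra coordinates, lying in $\cX_{c\overline o}\oplus\cX_{\overline{co}}\subseteq\cN\subseteq\ker C$, contribute nothing to $CA^kB$ because $C$ kills them and, by the block-triangular structure, they never feed back into the $\cX_{co}$-component under $A$. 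I expect \textbf{the main obstacle} to be precisely this last point: verifying that although $\cX_1'$ may contain directions from $\cX_{\overline{co}}$ (into which the observable part \emph{does} feed, via $A_{43}$), the $C$-annihilation still forces $C(A')^k B' = C A^k B$; this works out because $B'$ has no component in $\cX_{\overline{co}}$ (as $\im B\subseteq\cR\perp\cX_{\overline{co}}$... again not orthogonal — rather $\im B\subseteq\cX_{co}\oplus\cX_{c\overline o}$ which is $A$-invariant and $C$ restricted there equals $[C_1\ 0]$), so the trajectory $A'^kB'$ stays in $\cX_{co}\oplus\cX_{c\overline o}$ and $C$ reads off only the $\cX_{co}$-part, reproducing $C_1A_{11}^kB_1 = CA^kB$.
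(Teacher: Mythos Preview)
Your argument for the inclusion $\cX_{co}\subseteq\cX_1'$ is correct and matches the paper's.

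For the moment identity, your final paragraph is close to the paper's idea but the plan as written contains genuine errors that need to be fixed, not just polished. Specifically: you assert that $\cX_1'$ is $A$-invariant and that the ``extra coordinates'' of $\cX_1'$ beyond $\cX_{co}$ lie in $\cX_{c\overline o}\oplus\cX_{\overline{co}}\subseteq\cN$. Neither is true in general. Since $\cR''$ is an arbitrary subspace containing $\cR$, the piece $\cR''\ominus\cR$ of $\cX_1'$ can have a nontrivial component in $\cX_{\overline c o}$, which is neither killed by $C$ nor $A$-invariant-compatible in the way you need. So the approach ``$\cX_1'$ invariant, complement in $\ker C$'' cannot be repaired as stated.

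What \emph{does} survive from your last sentences is the correct mechanism: the trajectory $A'^kB'$ never leaves $\cX_1'\cap\cR$ and on that piece $C'$ only sees the $\cX_{co}$-coordinate, which evolves via $A_{11}$. The paper makes this precise by introducing the five-term orthogonal decomposition
\[
\cX \;=\; \cX_{co}\ \oplus\ \cX_2'\ \oplus\ \cZ_2\ \oplus\ \cZ_1\ \oplus\ \cR''^{\perp},
\]
where $\cX_2'=\cR'\cap\cN'$, $\cZ_2=\cX_{c\overline o}\ominus\cX_2'$ and $\cZ_1=\cR''\ominus\cR$, so that $\cX_1'=\cX_{co}\oplus\cZ_2\oplus\cZ_1$. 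Writing $A,B,C$ in block form with respect to this decomposition, the zeros come from two facts only: $\cR=\cX_{co}\oplus\cX_2'\oplus\cZ_2$ is $A$-invariant and contains $\im B$; and $\cX_2'\oplus\cZ_2=\cX_{c\overline o}\subseteq\cN$ is $A$-invariant and lies in $\ker C$. Compressing to $\cX_1'$ then gives a $3\times 3$ block matrix whose $(1,2)$ and $(3,1)$, $(3,2)$ blocks are zero, and a direct computation yields $C'A'^kB'=C_1A_{11}^kB_1=CA^kB$. This is exactly your final heuristic, but the explicit decomposition is what tracks which directions are in $\cR$ (so $B$ hits them and $A$ preserves them after compression) versus merely in $\cR''$ (the $\cZ_1$-part, which $B'$ does not hit and which cannot feed back into $\cX_{co}$). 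You should replace your invariance claims for $\cX_1'$ by this finer bookkeeping.
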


\begin{proof}[\bf{Proof}]
	By definition, $\cX_{co} = \cR\ominus(\cR\cap\cN)$. Since, $\cR\subseteq\cR''$ and $(\cR'\cap\cN')\subseteq(\cR\cap\cN)$, it follows that $\cX_{co}\subseteq\cX'$.
	
	For the second part, by analogy of the Kalman decomposition, define the subspaces
		\begin{align*}
		\cX_1' := \cR''\ominus(\cR'\cap\cN'),\
		\cX_2' := \cR'\cap\cN'\!\!,\ \
		\cX_3' := (\cR''+\cN')^\perp\!\!\!\!,\ \
		\cX_4' := \cN'\ominus(\cR'\cap\cN').
 		\end{align*}
		Then we have the following orthogonal sum decompositions:
		\begin{align*}
		\cX_1'\oplus\cX_2' & = \cR''\!\!\!, &
		\cX_2\oplus\cX_4  & = \cN'\!\!,\\
		\cX_1'\oplus\cX_2'\oplus\cX_4'& = \cR''+\cN'\!\!, &
		\cX_1'\oplus\cX_2'\oplus \cX_3'\oplus\cX_4' & = \cX.
		\end{align*}
		Since $\cR'\subseteq\cR$ and $\cN'\subseteq\cN$, we have $\cX_1'\subseteq\cX_{c\overline{o}}$. Also $\cR\subseteq\cR''$. Thus
		\[
		\cR''=\cR\oplus\cZ_1 \ands
		\cX_{c\overline{o}} = \cX_2'\oplus\cZ_2
		\]
		where $\cZ_1=\cR''\ominus \cR$ and $\cZ_2=\cX_{c\overline{o}}\ominus \cX_2'$. This leads to the following decomposition of $\cX$:
		\begin{align*}
		\cX
		& = \cR''\oplus\cR''^\perp
		 = (\cR \oplus \cZ_1) \oplus\cR''^\perp
		 = (\cX_{co}\oplus\cX_{c\overline{o}}) \oplus \cZ_1 \oplus \cR''^\perp \\
		& = \cX_{co}\oplus \cX_2'\oplus\cZ_2 \oplus \cZ_1 \oplus \cR''^\perp.
		\end{align*}
		With respect to this decomposition of $\cX$, the matrices $A$, $B$ and $C$ decompose as:
		\begin{align*}
			A & = \left[\begin{array}{c c c | c c}
			A_{11} & 0 & 0 & A_{14} & A_{15} \\
			A_{21} & A_{22} & A_{23} & A_{24} & A_{25}\\
			A_{31} & A_{32} & A_{33} & A_{34} & A_{35}\\ \hline
			0 & 0 & 0 & A_{44} & A_{45}\\
			0 & 0 & 0 & A_{54} & A_{55}
			\end{array}\right], \quad
			B = \left[\begin{array}{c}
			B_1\\B_2\\B_3\\ \hline0\\0
			\end{array}\right],\\
		 &\qquad\qquad	C  = \left[\begin{array}{c c c | c c}
			C_1 & 0 & 0 & C_4 & C_5
			\end{array}\right].
		\end{align*}
Note that there is no poset-causal structure in this system, hence the indices do not refer to subsystems here. The left bottom zero block in $A$ and the zeroes in $B$ are due to the fact that $\cR = \cX_{co}\oplus\cX_2'\oplus\cZ_2$ is an $A$-invariant subspace of $\cX$ that contains $\im B$. The two zeroes in the left upper block of $A$ and the zeroes in $C$ are due to the fact that $\cX_{c\overline{o}} = \cX_2'\oplus \cZ_2 \subseteq \cN$, which is an $A$-invariant subspace of $\cX$ that contains $(\im C)^\perp$.
		
		Now we compress $A$, $B$ and $C$ to the subspace $\cX'_1$, which is given by
		\begin{align*}
		\cX_1'
		& = \cR'' \ominus (\cR'\cap\cN')
		 = (\cR \oplus \cZ_1) \ominus \cX_2'
= (\cX_{co}\oplus\cX_2'\oplus \cZ_2 \oplus \cZ_1) \ominus \cX_2'\\
		 &= \cX_{co} \oplus \cZ_2 \oplus \cZ_1,
		\end{align*}
		which yields the matrices $A'$, $B'$ and $C'$:
		\begin{align*}
		A'& = \begin{bmatrix}
		A_{11} & 0 & A_{14}\\
		A_{31} & A_{33} & A_{34}\\
		0 & 0 & A_{44}
		\end{bmatrix}, \quad
		B' = \begin{bmatrix}
		B_1\\B_3\\0
		\end{bmatrix},\quad
		C' = \begin{bmatrix}
		C_1 & 0 & C_4
		\end{bmatrix}.
		\end{align*}
It follows that $A'^k$ has the form
\[
A'^k = \begin{bmatrix}
		A_{11}^k & 0 & *\\
		* & A_{33}^k & *\\
		0 & 0 & A_{44}^k
		\end{bmatrix},
\]
with $*$ indicating unspecified entries. Therefore, we now see that
		\[
		CA^kB = C_1A_{11}^kB_1 = C'A'^kB'\!\!, \quad k = 0,1,\ldots \qedhere
		\]
\end{proof}

\begin{rem}
In Lemma \ref{L:EquivReal} we take $\cR''\ominus (\cR'\cap\cN')$, for subspaces $\cR'\subseteq\cR\subseteq\cR''$ and $\cN'\subseteq\cN$, as an upper bound for $\cX_{co}=\cR\ominus (\cR\cap\cN)$. The alternative formula $\cX_{co}=P_\cR(\cN^\perp)$ suggests we could also consider the subspace $P_{\cR''}(\cN'^\perp)=\cR''\ominus (\cR''\cap\cN')$. However, for this choice, the inclusion $\cX_{co}\subseteq P_{\cR''}(\cN'^\perp)$ need not hold. For instance, one can construct a system with $\cX=\BR^3$, $\cR=\text{span}\{e_1+e_2+e_3\}$ and $\cN=\text{span}\{e_1,e_2-e_3\}$, in which case $\cN^\perp = \{e_2+e_3\}$ and $\cX_{co}=P_{\cR}(\cN^\perp) = \cR$. Taking $\cR'' =\text{span}\{e_1+e_2,e_3\}$ and $\cN'=\cN$, we find that  $P_{\cR''}(\cN'^\perp) = \text{span}\{\frac{1}{2}\sqrt{2}(e_1+e_2)+ e_3\}$ which does not contain $\cX_{co}$.
\end{rem}

%

\subsection{Kalman reduction for poset-causal systems}

If the system $\Si$ comes with the additional structure of a poset-causal system, i.e., $\Si=\Si_\cP$ for some poset $\cP$, then, in general, the poset structure is lost when $\Si_\cP$ is compressed to the Kalman reduction $\Si_\text{min}$, and one may have to compress to a larger structured subspace of the state space in order to preserve the poset structure.

\begin{defn}
Consider a poset-causal system $\Si_\cP\sim(A,B,C,D)$ with state space $\cX=\bigoplus_{j\in P}\cX_j$ and a subspace $\wtil{\cX}=\bigoplus_{j\in P}\wtil{\cX}_j$ such that $\wtil{\cX}_j\subseteq\cX_j$ for each $j$. If $\wtil{A},\wtil{B},\wtil{C}$ are the compressions of $A,B,C$ to $\wtil{\cX}$, respectively, and
\[
CA^kB = \wtil{C}\wtil{A}^k\wtil{B} \quad \text{for }k=0,1,\ldots,
\]
then the realization $\wtil{\Si}_\cP\sim(\wtil{A},\wtil{B},\wtil{C},D)$ is called a \emph{poset-causal reduction} of $\Si_\cP$ to the subspace $\wtil{\cX}$.
\end{defn}

Using Lemma \ref{L:EquivReal} and the state space subspaces that underlie our notions of controllability and observability defined in Sections \ref{S:UpControl} and \ref{S:DownObserve}, we obtain the following candidate for a poset-causal reduction.

\begin{prop}\label{P:KalDecStruc}
Consider a poset-causal system $\Si_{\cP}\sim(A,B,C,D)$ with associated subspaces $\wtil{\cR}$, $\overline{\cR}$ and $\overline{\cN}$ as defined in \eqref{eqR_joverlinetilde} and \eqref{eqN_joverlinetilde}. Define the subspace $\wtil{\cX}=\wtil{\cR}\ominus(\overline{\cR}\cap\overline{\cN})$ and let $\wtil{A}$, $\wtil{B}$ and $\wtil{C}$ be the compressions of $A$, $B$ and $C$ to $\wtil{\cX}$. Then $\wtil{\Si}_\cP\sim(\wtil{A},\wtil{B},\wtil{C},D)$ is a poset-causal reduction of $\Si_\cP$. Furthermore, if $P_{\cX_j}\wtil{\cX} =P_{\cX_j}\cX_{co}$ for all $j\in P$, and $\what{\cX}=\bigoplus_{j\in P} \what{\cX}_j$ is a subspace of $\cX$ with $\what{\cX}_j\subseteq \cX_j$ for all $j\in P$ so that $\cX_{co}\subseteq \what{\cX}$, then $\wtil{\cX}\subseteq \what{\cX}$.
\end{prop}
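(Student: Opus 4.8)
The plan is to verify the two assertions separately: first, that $\wtil\Si_\cP$ is a poset-causal reduction; second, the minimality-type statement about $\wtil\cX$ among structured subspaces containing $\cX_{co}$. For the first assertion, the strategy is to invoke Lemma \ref{L:EquivReal} with the choices $\cR'=\ov\cR$, $\cR''=\wtil\cR$ and $\cN'=\ov\cN$. By Theorem \ref{T:Contr} we have $\ov\cR\subseteq\cR\subseteq\wtil\cR$, and by Theorem \ref{T:Obs} we have $\ov\cN\subseteq\cN$, so the hypotheses of Lemma \ref{L:EquivReal} are met; then $\cX'_1=\wtil\cR\ominus(\ov\cR\cap\ov\cN)=\wtil\cX$ and the moment identity $CA^kB=\wtil C\wtil A^k\wtil B$ for $k=0,1,\ldots$ follows at once. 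It remains only to observe that $\wtil\cX$ is a \emph{structured} subspace, i.e.\ $\wtil\cX=\bigoplus_{j\in P}\wtil\cX_j$ with $\wtil\cX_j\subseteq\cX_j$; this is because $\wtil\cR$, $\ov\cR$ and $\ov\cN$ are each structured (by construction in \eqref{eqR_joverlinetilde} and \eqref{eqN_joverlinetilde}), the intersection of structured subspaces is structured, and the orthogonal complement of a structured subspace inside a structured subspace, taken blockwise, is structured — here $\ov\cR\cap\ov\cN\subseteq\wtil\cR$ so $\wtil\cX=\bigoplus_{j\in P}\bigl(\wtil\cR_j\ominus((\ov\cR_j\cap\ov\cN^j))\bigr)$, since orthogonality across distinct blocks $\cX_i\perp\cX_j$ lets the orthogonal difference be computed one block at a time. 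Consequently the compressions $\wtil A,\wtil B,\wtil C$ inherit the poset-causal zero pattern, so $\wtil\Si_\cP\sim(\wtil A,\wtil B,\wtil C,D)$ is genuinely a poset-causal reduction.

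For the second assertion, suppose $P_{\cX_j}\wtil\cX=P_{\cX_j}\cX_{co}$ for all $j\in P$, and let $\what\cX=\bigoplus_{j\in P}\what\cX_j$ be any structured subspace with $\cX_{co}\subseteq\what\cX$. The goal is $\wtil\cX\subseteq\what\cX$, and since both sides are structured it suffices to show $\wtil\cX_j\subseteq\what\cX_j$ for each $j$, equivalently $P_{\cX_j}\wtil\cX\subseteq\what\cX_j$. Now $\cX_{co}\subseteq\what\cX$ gives $P_{\cX_j}\cX_{co}\subseteq P_{\cX_j}\what\cX=\what\cX_j$, where the last equality uses that $\what\cX$ is structured (the projection of a blockwise direct sum onto one block is just that block's summand). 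Combining this with the hypothesis $P_{\cX_j}\wtil\cX=P_{\cX_j}\cX_{co}$ yields $P_{\cX_j}\wtil\cX=P_{\cX_j}\cX_{co}\subseteq\what\cX_j$, which is exactly what is needed. Summing over $j\in P$ gives $\wtil\cX=\bigoplus_{j\in P}P_{\cX_j}\wtil\cX\subseteq\bigoplus_{j\in P}\what\cX_j=\what\cX$.

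The routine-but-slightly-delicate point — and the step I'd expect to need the most care — is the very first bookkeeping claim: that for structured subspaces $\cV\subseteq\cW$ (with $\cV=\bigoplus_j\cV_j$, $\cW=\bigoplus_j\cW_j$, $\cV_j,\cW_j\subseteq\cX_j$) one has $\cW\ominus\cV=\bigoplus_j(\cW_j\ominus\cV_j)$, so that $\wtil\cX$ really is structured and the compressions really do respect the poset. This uses the orthogonality $\cX_i\perp\cX_j$ for $i\ne j$ built into the decomposition $\cX=\bigoplus_{j\in P}\cX_j$ (cf.\ \eqref{eqpartsub}); once that is granted, everything else is a direct application of Lemma \ref{L:EquivReal} together with the inclusions already established in Theorems \ref{T:Contr} and \ref{T:Obs}. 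No genuinely new ideas beyond those lemmas are required.
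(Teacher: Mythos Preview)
Your proposal is correct and follows essentially the same route as the paper: invoke Lemma \ref{L:EquivReal} with $\cR'=\ov\cR$, $\cR''=\wtil\cR$, $\cN'=\ov\cN$ using the inclusions from Theorems \ref{T:Contr} and \ref{T:Obs}, observe that $\wtil\cX$ is structured blockwise, and for the second claim project onto each $\cX_j$. Your extra care in spelling out why $\cW\ominus\cV$ is structured when $\cV\subseteq\cW$ are structured is a welcome elaboration of a step the paper leaves implicit.
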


\begin{proof}[\bf{Proof}]
Using definitions \eqref{eqR_joverlinetilde} and \eqref{eqN_joverlinetilde}, it follows that
\begin{align*}
\wtil{\cX} =\bigoplus_{j=1}^p\wtil{\cX}_j,\quad \text{with}\ \ \wtil{\cX}_j = \wtil{\cR}_j\ominus(\overline{\cR}_j\cap\overline{\cN}^j)\subset \cX_j,
\end{align*}
where $\wtil{\cX}_j$, $\overline{\cR}_j$ and $\overline{\cN}^j$ are also as defined in \eqref{eqR_joverlinetilde} and \eqref{eqN_joverlinetilde}. Hence $\wtil{\cX}$ is a structured subspace of the state space. Since $\ov{\cR}\subseteq \cR\subseteq\wtil{\cR}$ and $\ov{\cN}\subseteq \cN$ by \eqref{eqContrIncl} and \eqref{eqObsIncl}, respectively, it follows from Lemma \ref{L:EquivReal} that $	CA^kB = \wtil{C}\wtil{A}^k\wtil{B}$ for $k=0,1,\ldots$ and hence $\wtil{\Si}_\cP$ is a poset-causal reduction of $\Si_\cP$.

For the final claim, assume $\wtil{\cX}_j = P_{\cX_j}\wtil{\cX}=P_{\cX_j}\cX_{co}$ for all $j\in P$ and let $\what{\cX}$ be as in the proposition, then
\[
\wtil{\cX}_j = P_{\cX_j}\cX_{co} \subseteq P_{\cX_j}\what{\cX} = \what{\cX}_j, \quad j\in P. \qedhere
\]
\end{proof}

In the above proposition we worked with the subspaces $\wtil{\cR}$, $\overline{\cR}$ and $\overline{\cN}$ since they have a natural interpretation in the context of the poset-causal system $\Si_\cP$ and satisfy the inclusion conditions of Lemma \ref{L:EquivReal}. Furthermore, $\wtil{\cR}$ is the smallest structured subspace of $\cX$ that contains $\cR$ and $\overline{\cN}$ is the largest structured subspace of $\cX$ contained in $\cN$, but $\ov{\cR}$ need not be the largest structured subspace of $\cX$ contained in $\cR$, unless if $\ov{\cR}=\cR^\circ$. A potentially smaller structured subspace that contains $\cX_{co}$ is thus given by $\wtil{\cX}':=\wtil{\cR} \ominus (\cR^\circ\cap\overline{\cN})$. However, despite the fact that $\cR^\circ$ is the largest structured subspace contained in $\cR$, it need not be the case that
$P_{\cX_j}\wtil{\cX}'=P_{\cX_j}\cX_{co}$ for all $j\in P$, as illustrated in the next example.

\begin{ex}\label{E:NonOpt}
	Consider a leader-follower system $\Si_\cP\sim(A,B,C,D)$, where $\cP=(P,\preceq)$ is the poset with $P=\{1,2\}$ and $1\preceq2$. Suppose $A \in \cI_\cP^{\un{n}\times\un{n}}$, $B \in \cI_\cP^{\un{n}\times\un{m}}$ and $A \in \cI_\cP^{\un{r}\times\un{n}}$ with $\un{n}=(2,2)$, $\un{m}=(2,1)$ and $\un{r}=(1,1)$, are given by
\begin{align*}
A = \left[\begin{array}{c c | c c}
1 & 0 & 0 & 0\\
0 & 0 & 0 & 0\\\hline
0 & 0 & 0 & 0\\
0 & 0 & 1 & 0
\end{array}\right], \quad
B = \left[\begin{array}{c c |c}
1 & 0 & 0\\
0 & 1 & 0 \\ \hline
0 & 0 & 0\\
0 & 1 & 0
\end{array}\right] \ands
C = \left[\begin{array}{c c|c c}
1 & 0 & 0 & 0\\\hline
1 & 0 & 1 & 0
\end{array}\right].
\end{align*}
Then $\cX_1=\text{span}\{e_1,e_2\}$ and $\cX_2=\text{span}\{e_3,e_4\}$. We can calculate the reachable space $\cR$ and the unobservable space $\cN$ using \eqref{eqreachunobs} and \eqref{eqcCcO}:
\[
\cR = \text{span}\{e_1,e_2+e_4\} \ands \cN = \text{span}\{e_2,e_4\}.
\]
In this case, we have $\cX_{co}=\cR\ominus(\cR\cap\cN)=\text{span}\{e_1\}$ and hence
\[
P_{\cX_1}\cX_{co} = \text{span}\{e_1\} \ands P_{\cX_2}\cX_{co} = \{0\}.
\]
Now we consider the structured space $\wtil{\cX}' = \wtil{\cR}\ominus(\cR^\circ\cap\overline{\cN}) = \bigoplus_{j=1,2}\wtil{\cX}_j'$. Then $\cX_j' = \wtil{\cX}_j\ominus (\cR^\circ_j \cap \overline{\cN}^j)$ and we can compute $\wtil{R}_j$, $\cR^\circ_j$ and $\overline{\cN}^j$ for $j=1,2$ using Lemma \ref{L:Rtilde} and Theorem \ref{T:Obs}:
\begin{align*}
\wtil{\cR}_1 & = P_{\cX_1}\cR = \text{span}\{e_1,e_2\}, &
\cR^\circ_1 &= \cX_1\cap\cR= \{e_1\}, &
\overline{\cN}^1 & = \cX_1\cap\cN = \text{span}\{e_2\},\\
\wtil{\cR}_2 & = P_{\cX_2}\cR = \text{span}\{e_4\}, &
\cR^\circ_2 &= \cX_2\cap\cR= \{0\}, &
\overline{\cN}^2 & = \cX_2\cap\cN = \text{span}\{e_4\},
\end{align*}
which gives
\[
\cX_1' = \wtil{\cR}_1 \ominus (\cR^\circ_1 \cap \overline{\cN}^1) = \cX_1 \ands
\cX_2' = \wtil{\cR}_2 \ominus (\cR^\circ_2 \cap \overline{\cN}^2) = \text{span}\{e_4\}.
\]
For both $j=1,2$, we see that $P_{\cX_j}\cX_{co} \neq \cX'_j$. Hence, despite $\cX_{co}$ being a structured subspace of $\cX$, of dimension 1, our approximation obtained from Proposition \ref{P:KalDecStruc} is a structured subspace of dimension 3. One can further check that in this case $\ov{\cR}=\cR^\circ$.
\end{ex}

There are many different choices of state space subspaces to compress the matrices $A$, $B$ and $C$ to a minimal realisation, which may or may not be structured, and when it is not structured, there may or may not be a natural way to embed this subspace in a structured subspace of $\cX$ for which compressed matrices preserve the moments. In this paper we have chosen to work with the space $\cX_{co}=\cR\ominus(\cR\cap\cN)$, since it appears naturally in the Kalman decomposition of the system and there are natural structured analogues of the observability and controllability spaces that meet the requirements. Alternatively, using a duality argument, one can also work with $\cN^\perp$ and $\cR^\perp$ instead of $\cR$ and $\cN$, respectively. In this case, the (possibly) non-structured subspace becomes $\cN^\perp \ominus (\cN^\perp \cap \cR^\perp)$, which can be embedded in the structured subspace $\wtil{\cN}^\perp \ominus (\ov{\cN}^\perp \cap \wtil{\cR}^\perp)$, or in the (possibly) smaller structured subspace $\cN^{\circ \perp} \ominus (\ov{\cN}^\perp \cap \wtil{\cR}^\perp)$. In the above example, in fact, it turns out that all three subspaces of $\cX$ are the same, so that in this case it is better to work with $\cN^\perp$ and $\cR^\perp$ instead of $\cR$ and $\cN$.

\begin{ex}
Let $A$, $B$ and $C$ as well as $\cP$ be as in Example \ref{E:NonOpt}. In this case one can compute that $\wtil{\cN}=\cN^\circ=\cN=\text{span}\{e_2,e_4\}$, while it was already observed that $\ov{\cN}=\cN$, $\cR=\text{span}\{e_1, e_2+e_4\}$ and $\wtil{\cR}=\text{span}\{e_1,e_2,e_4\}$. Hence
\[
\ov{\cN}^\perp\!\! =\wtil{\cN}^\perp\!\! =\cN^{\circ \perp}\!\! =\cN^\perp\!\! =\text{span}\{e_1,e_3\},\  \
\cR^\perp\!\! =\text{span}\{e_2-e_4,e_3\} ,\ \
\wtil{\cR}^\perp\!\! = \text{span}\{e_3\}.
\]
From this we obtain that
\[
\cN^\perp \ominus (\cN^\perp \cap \cR^\perp)=\text{span}\{e_1\}=\wtil{\cN}^\perp \ominus (\ov{\cN}^\perp \cap \wtil{\cR}^\perp).
\]
Hence, in this case, when compressing to $\wtil{\cN}^\perp \ominus (\ov{\cN}^\perp \cap \wtil{\cR}^\perp)$ a minimal realisation is obtained.
\end{ex}

\section{Conclusion}

In this paper we initiated a study into the realization theory for the class of poset-causal systems defined by Shah and Parrilo \cite{SP08}. Various notions of controllability and observability for poset-causal systems were introduced and investigated and their relations under duality were explained. The additional structure of a poset-causal system includes a decomposition of the state, input and output space of the global system as an orthogonal sum of the state, input and output spaces of the subsystems. While the classical notions of controllability and observability are based on the spaces of reachable and indistinguishable states, and as a result, in general, will not respect the poset-causal structure, we introduced notions based on reachable and indistinguishable states from the perspective of the subsystems, using upstream and downstream systems, which led to subspaces of reachable and indistinguishable states that do respect the poset-causal system structure, in the sense that they decompose as orthogonal sums of subspaces of the local state spaces of the subsystems, so-called structured subspaces. For some of these new notions, it turned out that they are optimal, in the sense that there is no better structured subspace of the state space to approximate the space of reachable or indistinguishable states, either as an upper or lower bound.

Using the new notions of reachable and indistinguishable states, we presented a variation of the Kalman reduction formula, which can be used to determine a poset-causal system of reduced size that still preserves the input-output map of the original system. Despite the optimality results for some of the new spaces of reachable and indistinguishable states on which the Kalman reduction is based, it turns out that the Kalman reduction obtained in this paper is not necessarily of minimal size. Hence, the question of how to determine a minimal poset-causal system that preserves the input-output map of a given poset-causal system remains a topic of further study.\smallskip

\paragraph{\bf Acknowledgments}
This work is based on research supported in part by the National Research Foundation of South Africa (NRF) and the DSI-NRF Centre of Excellence in Mathematical and Statistical Sciences (CoE-MaSS). Any opinion, finding and conclusion or recommendation expressed in this material is that of the authors and the NRF and CoE-MaSS do not accept any liability in this regard.

\end{document}